\newcommand\spec{\mathrm{Spec}}
\newcommand\proj{\mathrm{Proj}}
\newcommand\sse{\subseteq}
\newcommand\tx{\widetilde{X}}
\newcommand\geqs{\geqslant}
\newcommand\mbz{\mathbb{Z}}
\newcommand\mbc{\mathbb{C}}
\newcommand\clx{\mathrm{Cl}(X)}
\newcommand\aut{\mathrm{Aut}}
\newcommand\HilbA{\Hilb_k^{K_X}}
\newcommand{\RTZ}{RTZ balanced}
\newcommand\cB{{\mathcal B}}
\newcommand\cH{{\mathcal H}}
\newcommand\cO{{\mathcal O}}
\newcommand{\id}{{\rm id}}
\newcommand{\FS}{\mathrm{FS}}
\newcommand{\Hilb}{\mathrm{Hilb}}
\newcommand{\Vol}{\mathrm{Vol}}
\newcommand{\Bc}{\mathrm{Bc}}
\newcommand{\lct}{\mathrm{lct}}
\newcommand{\Hom}{\mathrm{Hom}}
\newcommand{\CC}{\mathbb {C}}
\newcommand{\NN}{{\mathbb N}}
\newcommand{\PP}{{\mathbb P}}
\newcommand{\QQ}{{\mathbb Q}}
\newcommand{\RR}{{\mathbb R}}
\newcommand{\ZZ}{{\mathbb Z}}
\DeclareMathOperator{\Ric}{Ric}
\DeclareMathOperator{\Aut}{Aut}
\DeclareMathOperator{\Ver}{Vert}
\DeclareMathOperator{\co}{co}
\DeclareMathOperator{\diag}{diag}
\DeclareMathOperator{\tr}{tr}
\theoremstyle{plain}
\newtheorem{theorem}{Theorem}[section]
\newtheorem{proposition}[theorem]{Proposition}
\newtheorem{lemma}[theorem]{Lemma}
\newtheorem{claim}[theorem]{Claim}
\newtheorem{corollary}[theorem]{Corollary}
\newtheorem{conjecture}[theorem]{Conjecture}
\newtheorem{problem}[theorem]{Problem}
\theoremstyle{definition}
\newtheorem{definition}[theorem]{Definition}
\newtheorem{remark}[theorem]{Remark}
\newtheorem{example}[theorem]{Example}
\def\KE{K\"ahler--Einstein }
\newcommand{\beq}{\begin{equation}}
\newcommand{\eeq}{\end{equation}}
\newcommand{\bpf}{\begin{proof}}
\newcommand{\epf}{\end{proof}}
\newcommand{\bdefn}{\begin{definition}}
\newcommand{\edefn}{\end{definition}}
\newcommand{\bremark}{\begin{remark}}
\newcommand{\eremark}{\end{remark}}
\newcommand{\bconj}{\begin{conjecture}}
\newcommand{\econj}{\end{conjecture}}
\newcommand{\bcor}{\begin{corollary}}
\newcommand{\ecor}{\end{corollary}}
\newcommand{\blem}{\begin{lemma}}
\newcommand{\elem}{\end{lemma}}
\newcommand{\bclaim}{\begin{claim}}
\newcommand{\eclaim}{\end{claim}}
\newcommand{\bprob}{\begin{problem}}
\newcommand{\eprob}{\end{problem}}
\newcommand{\bprop}{\begin{proposition}}
\newcommand{\eprop}{\end{proposition}}
\newcommand{\bthm}{\begin{theorem}}
\newcommand{\ethm}{\end{theorem}}
\newcommand{\bexam}{\begin{example}}
\newcommand{\eexam}{\end{example}}
\def\lb#1{\label{#1}}
\def\ra{\rightarrow}
\def\ran{\rangle}
\def\lan{\langle}
\def\q{\quad}
\newcommand\blfootnote[1]{%
	\begingroup
	\renewcommand\thefootnote{}\footnote{#1}%
	\addtocounter{footnote}{-1}%
	\endgroup
}
\title{Symmetry notions for toric Fanos}
\author{Chenzi Jin, Yanir A. Rubinstein, Yang Zhang }
\date{20 June 2025}
\begin{document}

\maketitle

\begin{abstract}
We survey various notions of symmetry for toric varieties. These
notions range from algebraic geometric,
complex geometric, representation theoretic, combinatorial, convex geometric, to geometric stability. The main theorem gives the relationship between these notions. While mostly folklore knowledge, this does not seem to be readily available in the literature. Finally, we take the opportunity to give an accessible and simplified proof of Demazure's 1970 structure theorem for the automorphism group of a smooth toric variety, previously considered quite inaccessible.

\end{abstract}

\blfootnote{
The research of Y.A.R. was supported by the
Bergman Distinguished Visiting Professorship at Stanford University, a Simons Fellowship in Mathematics, BSF grant 2020329, and
NSF grant DMS-2506872.
The authors 
thank I. Arzhantsev, D. Cox,
S. Saito, and C. Shramov for comments.
}


\tableofcontents

\section{Introduction}

Symmetry is a manifestation of beauty in nature. 
Toric varieties are among the most symmetric complex varieties, and this
leads to rich and fascinating structures on them. In fact, their rich symmetry 
is the main
reason that toric varieties lie at the intersection of so many branches of mathematics: algebra, analysis, geometry, combinatorics, probability, dynamics, spectral theory, mirror symmetry, and the list goes on. Many mathematicians have been fascinated by the beauty
and depth of toric varieties, e.g., Demazure, Fulton, Mumford, Atiyah, Sternberg, Mabuchi, Oda, Stanley, Guillemin, Cox, Calabi, Donaldson, Zelditch. 
And, among toric varieties, {\it toric Fano} varieties are arguably the
most symmetric.

Symmetry for a differential geometer might mean the existence of a canonical geometric structure that makes the manifold most aesthetically pleasing.
For a combinatorialist it might mean a certain configuration of lattice points is arranged neatly. For a representation theorist these lattice points could simply represent characters. And for an algebraic geometer this might manifest in a certain group of automorphisms being linear algebraic and reductive, or by a certain stability or log canonical threshold being as large as possible. 

The purpose of this mostly expository article is to survey these different notions of symmetry and organize the relationships between them. The world of toric Fanos magically captures all of them, and is therefore a wonderful interdisciplinary mini-cosmos.
What is more, many general (i.e., non-toric) conjectures and problems gain tremendously from a profound understanding of the toric setting, see, e.g.,
\cite{JRT1}. Thus, one should not underestimate or belittle toric algebra/geometry.
 
\bigskip
We also take the opportunity to present a self-contained proof of Demazure's structure theorem for the automorphism group of a toric variety.
One of the reasons for doing that is that this is a beautiful theorem and when we asked around experts it seemed that none (except David Cox) were familiar with the proof---it has become a mostly forgotten one. Even Oda's famous treatise on toric varieties omits the proof, in fact, Oda writes \cite[p. 140]{Oda88},
{\sl``Demazure states his results in this final form in \cite[Proposition 11, p. 581]{Dem70} although the proof, which is valid for (not necessarily compact) non-singular toric varieties, is spread over the entire paper. We omit the proof here, since it requires so much space and basic knowledge on linear algebraic groups.''} In fact, Demazure's paper is over 80 pages long.
The purpose of \S\ref{DemazureTheo} and \S\ref{Proof of Demazure's Theorem} is to both resurrect this theorem and to simplify its proof (see Remark \ref{simplifyremark}). 
Our main resource is Cox's revisiting of Demazure's theorem to simplicial complete toric varieties from 1995 \cite{Cox95}. Cox's treatment, while more concise than Demazure's, is still quite technical and accessible only to experts.
Additionally, it originally contained an oversight, fixed by Cox in his erratum in 2014 \cite{Cox14}, once
again emphasizing the subtle nature of the theorem.

\subsection{Fundamental theorem on symmetry notions on toric Fanos}
\label{Symmetry notions on toric Fanos}

\begin{definition}
\label{SymmDef}
Let $X$ be a toric Fano manifold, $P\subseteq M$ the polytope associated to $-K_X$, and $\Aut P\subseteq \Aut M\cong \mathrm{GL}(n, \ZZ)$ the subgroup of the automorphism group of the lattice $M$ that leaves $P$ invariant. We say



\begin{enumerate}[(a)]
    \item $P$ is \emph{centrally symmetric} if $P=-P$, i.e., $x\in P$ if and only if $-x\in P$.
    \item $P$ is \emph{Batyrev--Selivanova symmetric} if the only element of $M$ fixed by $\Aut P$ is $0$. We say $X$ is \emph{Batyrev--Selivanova symmetric}  if the associated polytope $P$ is.
    \item $P$ is \emph{centrally lattice symmetric} if $R(P)=-R(P)$, where $R(P)\subset\partial P\cap M$ is the set of lattice points contained in relative interiors of the codimension-1 faces of $P$ (see Definition \ref{roots} and Figure \ref{Fig for R(P)}).
\end{enumerate}
\end{definition}

The following summarizes the different notions of symmetry on toric Fano manifolds.
It relates symmetry notions
(group actions), combinatorial 
notions (lattice point barycenters),
 differential geometric notions (canonical metrics: balanced metrics and \KE metrics),
 and an algebraic notion (reductivity).
 
\begin{theorem}\label{relation of symm notions on metric level}
Let $X$ be a toric Fano manifold and $P$ its associated
lattice polytope. 
The relationships among the following notions are: 
\hfill\break
$$
\hbox{\centerline{\noindent\textup{(\ref{item central sym-intro})}
$\Rightarrow$
\textup{(\ref{item BS sym-intro})}
$\Leftrightarrow$
\textup{(\ref{item alphaGone-intro})}
$\Rightarrow$
\textup{(\ref{item Bc_k=0 for all k-intro})}
$\Leftrightarrow$
\textup{(\ref{item Bc_k=0 for large k-intro})}
$\Leftrightarrow$
\textup{(\ref{item Bc_k=0 for n+1 values of k-intro})}
$\Rightarrow$
\textup{(\ref{item Bc=0-intro})}
$\Rightarrow$
\textup{(\ref{item R(P)=-R(P)-intro})}
$\Leftrightarrow$
\textup{(\ref{item reduc-intro})}
}}.$$

Moreover, these implications are optimal, i.e.:
$$
\eqref{item central sym-intro}\not\Leftarrow\eqref{item BS sym-intro}\not\Leftarrow\eqref{item Bc_k=0 for all k-intro}\not\Leftarrow\eqref{item Bc=0-intro}\not\Leftarrow\eqref{item R(P)=-R(P)-intro}.
$$

\noindent
In dimension at most 6, 
\textup{(\ref{item central sym-intro})}
$\Rightarrow$
\textup{(\ref{item BS sym-intro})}
$\Leftrightarrow$
\textup{(\ref{item alphaGone-intro})}
$\Leftrightarrow$
\textup{(\ref{item Bc_k=0 for all k-intro})}
$\Leftrightarrow$
\textup{(\ref{item Bc_k=0 for large k-intro})}
$\Leftrightarrow$
\textup{(\ref{item Bc_k=0 for n+1 values of k-intro})}
$\Leftrightarrow$
\textup{(\ref{item Bc=0-intro})}
$\Rightarrow$
\textup{(\ref{item R(P)=-R(P)-intro})}
$\Leftrightarrow$
\textup{(\ref{item reduc-intro})}.

\begin{enumerate}[$(\rm i)$]
\item \label{item central sym-intro}
$P$ is centrally symmetric, i.e., $P=-P$.
\item \label{item BS sym-intro}
$P$ is Batyrev--Selivanova symmetric ({Definition
\ref{SymmDef}} (b)).
\item \label{item alphaGone-intro}
$\alpha_G(-K_X)=1$, where $G\subset\Aut X$ is the group generated by $(S^1)^n$ and $N(T)/T$, $N(T)$ is the normalizer of the maximal (complex) torus $T$ in $\Aut X$;
\item \label{item Bc_k=0 for all k-intro}
$\Bc_k(P)=0$ for all $k$,
equivalently 
$\delta_k(-K_X)=1$ for all $k$, 
equivalently $X$ admits $k$-anticanonically balanced metrics for all $k$.
\item  \label{item Bc_k=0 for large k-intro}
$\Bc_k(P)=0$ for all sufficiently large $k$, 
equivalently 
$\delta_k(-K_X)=1$ for all sufficiently large $k$.
equivalently $X$ admits $k$-anticanonically balanced metrics for all sufficiently large $k$;
\item  \label{item Bc_k=0 for n+1 values of k-intro}
$\Bc_k(P)=0$ for at least $n+1$ values of $k$.
equivalently 
$\delta_k(-K_X)=1$ for at least $n+1$ values of $k$, 
equivalently $X$ admits $k$-anticanonically balanced metrics for at least $n+1$ values of $k$.
\item  \label{item Bc=0-intro}
$\Bc(P)=0$, equivalently 
$\delta(-K_X)=1$, equivalently $X$ admits a K\"ahler--Eintein metric.
\item \label{item R(P)=-R(P)-intro}
$P$ is {centrally lattice symmetric} (Definition \ref{SymmDef} (c)).
\item \label{item reduc-intro}
$\Aut_0X$ is reductive.
\end{enumerate}
\end{theorem}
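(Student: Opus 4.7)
The plan is to walk the implication chain from $(i)$ to $(ix)$ in order, relying on Demazure's structure theorem for the final equivalence, and to separately treat the sharpness counterexamples and the dimension-$\leq 6$ collapse.

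First, $(i)\Rightarrow(ii)$ is immediate since $P=-P$ forces $-\id\in\Aut P$, whose only $M$-fixed vector is $0$. For $(ii)\Leftrightarrow(iii)$, I would combine the identification $N(T)/T\cong\Aut P$ (a consequence of Demazure's theorem reproved later in the paper) with the standard toric computation of the $G$-invariant alpha invariant, which reduces $\alpha_G(-K_X)=1$ to $M_\RR^{\Aut P}=\{0\}$. Then $(iii)\Rightarrow(iv)$ is direct by averaging: since $kP\cap M$ is $\Aut P$-stable, $\Bc_k(P)\in M_\RR^{\Aut P}=M^{\Aut P}\otimes\RR=\{0\}$ for every $k$. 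The three formulations within $(iv)$--$(vi)$ in terms of $\delta_k$-invariants and $k$-anticanonical balanced metrics are matched by the standard toric translation $\delta_k(-K_X)=1\Leftrightarrow\Bc_k(P)=0\Leftrightarrow$ the existence of a $T$-invariant $k$-balanced metric.

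The equivalences $(iv)\Leftrightarrow(v)\Leftrightarrow(vi)$ rest on polynomial rigidity. The first lattice moment $F(k):=\sum_{v\in kP\cap M}v$ is, coordinatewise, a polynomial in $k$ of degree at most $n+1$ (a vector-valued Ehrhart polynomial obtained by summing a linear function against the lattice-point-counting measure). Together with the trivial identity $F(0)=0$, vanishing at $n+1$ additional positive integer values of $k$ provides $n+2$ zeros of a polynomial of degree $\leq n+1$, forcing $F\equiv 0$ and hence $(vi)\Rightarrow(iv)$. The implication $(vi)\Rightarrow(vii)$ is read off from the leading coefficient: $F(k)/k^{n+1}\to\Vol(P)\Bc(P)$ as $k\to\infty$, so $F\equiv 0$ yields $\Bc(P)=0$. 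Finally $(vii)\Rightarrow(viii)\Leftrightarrow(ix)$ proceeds in three steps: Wang--Zhu's theorem ($\Bc(P)=0$ yields a \KE metric), Matsushima's theorem (existence of \KE implies $\Aut_0 X$ reductive), and Demazure's theorem, which identifies the unipotent radical of $\Aut_0 X$ with the subgroup generated by root subgroups $U_\alpha$ for $\alpha\in R(P)$ with $-\alpha\notin R(P)$; reductivity is therefore equivalent to $R(P)=-R(P)$.

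For the sharpness assertions, explicit counterexamples are needed at each non-implication: $(i)\not\Leftarrow(ii)$ admits small examples in low dimension, while $(vii)\not\Leftarrow(viii)$ is standard in the toric \KE literature (a reductive toric Fano whose continuous barycenter is off-origin). By the final part of the theorem, the examples for $(ii)\not\Leftarrow(iv)$ and $(iv)\not\Leftarrow(vii)$ must both first arise in dimension $\geq 7$: respectively a toric Fano with all lattice-point barycenters vanishing yet $\Aut P$ fixing a non-zero vector, and a \KE toric Fano some of whose discrete moments $\Bc_k(P)$ fail to vanish. The dimension-$\leq 6$ collapse $(iv)\Leftrightarrow(vii)$ would then be established by finite enumeration over the classification of smooth toric Fanos in each dimension $\leq 6$, checking case by case that $\Bc(P)=0$ forces $\Bc_k(P)=0$ for all $k$. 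The main obstacle I anticipate is precisely this dimension-$7$ counterexample construction, since it simultaneously certifies sharpness and calibrates the low-dimensional collapse, and is presumably the computational heart of the theorem. A secondary technical hurdle is the clean invocation of Demazure's theorem for $(viii)\Leftrightarrow(ix)$, which, as Cox's 2014 erratum demonstrates, contains subtleties in the precise identification of root subgroups with lattice points of $R(P)$.
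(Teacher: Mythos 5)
Your proposal follows essentially the same chain of reasoning as the paper, and most of the individual implications are argued correctly: $(i)\Rightarrow(ii)$ by central reflection, $(ii)\Leftrightarrow(iii)$ via the toric computation of $\alpha_G$ (the paper uses Song's formula through Demailly's approximation, Theorems \ref{Song} and \ref{Demailly} in the source), the averaging argument for the barycenter (note this is really $(ii)\Rightarrow(iv)$ and relies on $(ii)\Leftrightarrow(iii)$ to close the chain), the Ehrhart-type polynomiality giving $(iv)\Leftrightarrow(v)\Leftrightarrow(vi)\Rightarrow(vii)$ (the paper's Theorem \ref{JR2thm} is precisely your polynomial-rigidity argument, packaged via the auxiliary polytope $P_i$), and Wang--Zhu $+$ Matsushima $+$ Demazure's structure theorem for $(vii)\Rightarrow(viii)\Leftrightarrow(ix)$.

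There are three genuine gaps. First, you treat the identifications $\Bc_k(P)=0\Leftrightarrow\delta_k(-K_X)=1\Leftrightarrow$ existence of a $k$-anticanonically balanced metric as a ``standard toric translation,'' but the borderline case $\delta_k=1$ is precisely where the general (non-toric) theory fails, and the paper's Theorem \ref{SaitoThm} invokes specific results of Saito--Takahashi, Hashimoto, and Saito to close the gap; your sketch does not address this. Second, for the optimality of $(ii)\Leftarrow(iv)$ you assert that a toric Fano with all quantized barycenters vanishing but $\Aut P$ fixing a non-zero vector must exist in dimension $\geq 7$. In fact the paper's \S6.5 explicitly states that no such example is currently known, and that if it exists it must be in dimension at least $9$ (the three Nill--Paffenholz candidates in dimensions $7$ and $8$ are ruled out because their quantized barycenters are shown in Appendix \ref{Nill--Paffenholz's counterexamples} to be non-zero); your proposal therefore cannot, by your proposed method, establish this non-implication, and you have silently conflated the two counterexamples. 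Third, your strategy for the dimension-$\leq 6$ collapse --- case-by-case verification that $\Bc(P)=0$ forces $\Bc_k(P)=0$ --- would only give $(vii)\Rightarrow(iv)$, not the full chain $(vii)\Rightarrow(ii)$ claimed in the theorem statement. The paper instead cites Nill--Paffenholz's exhaustive search showing that in dimension $\leq 6$ every $\Bc(P)=0$ example is already Batyrev--Selivanova symmetric; this is both a stronger conclusion and a genuinely finite combinatorial check, whereas verifying $\Bc_k(P)=0$ for all $k$ directly requires repackaging through the polynomiality argument anyway.
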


For the definitions and a discussion on the canonical metrics mentioned in Theorem \ref{relation of symm notions on metric level}, namely K\"ahler--Einstein metrics (Definition \ref{KE def}) and anticanonically balanced metrics (Definition \ref{abm def}), see \S\ref{Notions of canonical metrics}.

As expected, this theorem generalizes to {\it toric Fano pairs},
i.e., to polarized $(X,L)$ with $-K_X-L>0$, where, e.g., twisted
\KE and anticanonically balanced metrics come into play \cite[\S2]{RTZ21}. We leave this and similar
generalizations to the interested readers.

It is natural to wonder whether 
the (somewhat surprising) implication 
(\ref{item Bc_k=0 for n+1 values of k-intro})
$\Rightarrow$
\textup{(\ref{item Bc=0-intro})}
is optimal. What if one further weakens the assumption and only requires the
vanishing of $\Bc_k(P)=0$
for a {\it single} value of $k$?
Specifically, if
$\Bc_k(P)=0$ for {\it some} $k$,
does that imply \textup{(\ref{item Bc=0-intro})}, i.e., $\Bc(P)=0$? 
If we allow $X$ to be singular, a counterexample is given by Nill \cite[\S5.6]{Nil04}. However, it seems no counterexample is known in the smooth case.
In a forthcoming paper, it is shown that
$\Bc_k(P)=0$ for  some $k$ implies
\textup{(\ref{item R(P)=-R(P)-intro})}
(i.e., by Theorem \ref{relation of symm notions on metric level}, also 
\textup{(\ref{item reduc-intro})}, i.e.,
$\Aut_0X$ is reductive) \cite{BJRS}.

\subsection{The Demazure structure theorem re-revisited}

	The automorphism group of a toric variety $X$, denoted by $\Aut X$, consists of all algebraic automorphisms of $X$. A caveat is that such maps need not preserve the toric structure. Thus, the complete analysis of $\Aut X$ is { not} as simple (e.g., purely combinatorial) as one might initially expect.
	
	In 1970, Demazure \rm{\cite[Proposition 11, p. 581]{Dem70}} presented his structure theorem for the automorphism group of a smooth toric variety. Roughly, $\Aut X$ decomposes into subgroups related to the combinatorial data of the toric variety (i.e., its fan or polytope). 	
	Specifically, $\Aut_0 X$ (the identity component of $\Aut X$) contains the complex torus acting on $X$ as a maximal torus and a unipotent subgroup corresponding to what Demazure coined as the {\it roots} of $X$. These are denoted by $R(P)$ (Definition \ref{roots})
    since they can be identified with the lattice points described in Definition \ref{SymmDef} (c) using the same notation.

	In 1995, Cox provided a new point of view on Demazure's theorem
via the {\it homogeneous coordinate ring} (see \ref{HomoRing}). The advantage
of this ring is that it is intrinsic to $X$ and not associated to a particular projective embedding of $X$  \rm{\cite[\S 4]{Cox95}}. 
This also showed that Demazure's theorem extends verbatim to the orbifold
case, also referred to as the simplicial case by algebraic geometers
(Definition \ref{fanTermi}). 
Overall, the statement of Demazure's theorem is that, for an orbifold toric variety $X$, $\Aut X$ is generated by the maximal torus, certain {\it root automorphisms} associated with the lattice point in the relative interior of the $1$-dimensional faces of the polytope, and automorphisms induced by symmetries of the fan. The precise statement is:

    \begin{theorem}{\rm (Demazure's Structure Theorem)}
        Let $X=X_\Delta$ be a compact toric orbifold. Then the identity component $\Aut_0X$ of its automorphism group $\Aut X$ is a linear algebraic group (see Definition \ref{reductivegp}) satisfying:
    \begin{enumerate}[$(\rm i)$]
       \item $(\mbc^*)^n$ is a maximal algebraic torus in $\Aut_0X$. $\Aut_0X$ is the semi-direct product of its unipotent radical with a reductive group, which is generated by the maximal algebraic torus and the one-parameter groups corresponding to the semisimple roots $R_s(P)$.
       
        \item The unipotent radical of $\Aut_0X$ is generated by the one-parameter groups $\phi^{m,\lambda}$ for $m\in R_u(P)$ 
        (see \eqref{rootsAut} and Definition \ref{roots}).
    
    \item $\Aut \Delta$ can be naturally embedded in $\Aut X$ and $\Aut X$ is the product $\Aut \Delta \Aut_0X$, where $\Aut \Delta$ is the linear automorphism group of the corresponding fan $\Delta$ (see \eqref{AutDeltaEq}).
    
    \end{enumerate}
    \end{theorem}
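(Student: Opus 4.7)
The plan is to follow Cox's approach via the homogeneous coordinate ring $S = \CC[x_\rho : \rho \in \Delta(1)]$ of $X$, a polynomial ring graded by the divisor class group $\mathrm{Cl}(X)$ with $\deg(x_\rho) = [D_\rho]$. Writing $X$ as the geometric quotient $\bigl(\spec S \setminus V(B)\bigr)/G$, where $B$ is the irrelevant ideal generated by the monomials $\prod_{\rho \notin \sigma(1)} x_\rho$ indexed by maximal cones $\sigma \in \Delta$, and $G = \Hom_\ZZ(\mathrm{Cl}(X), \CC^*)$, I would begin by showing that any automorphism of $X$ lifts to a $G$-equivariant biregular self-map of $\spec S \setminus V(B)$. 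Since $V(B)$ has codimension at least $2$ in $\spec S$, the lift extends uniquely via Hartogs to $\spec S$, producing a $\mathrm{Cl}(X)$-graded $\CC$-algebra automorphism of $S$. This reduces the entire problem to the graded automorphism group $\Aut_g(S)$---a linear algebraic group---modulo the action of $G$.

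Next, I would carry out the Lie algebra computation at the identity. A $\mathrm{Cl}(X)$-graded derivation of $S$ must send each $x_\rho$ to a homogeneous polynomial of class $[D_\rho]$; classifying the monomials of each such class yields the root-space decomposition
$$
\mathrm{Lie}\,\Aut_0 X \;=\; \mathfrak{t} \;\oplus\; \bigoplus_{m \in R(P)} \CC\cdot e_m ,
$$
where $\mathfrak{t}$ is the Cartan subalgebra of the acting torus $(\CC^*)^n$, and $e_m$ is the derivation determined by the lattice point $m$ with its unique index $\rho$ of negative pairing. Exponentiating $e_m$ produces the one-parameter root subgroup $\phi^{m,\lambda}$. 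The decisive algebraic fact is that $[e_m, e_{-m}] \in \mathfrak{t}$ is non-zero exactly when both $m$ and $-m$ belong to $R(P)$, so the pair $\{e_m, e_{-m}\}$ together with the corresponding coroot in $\mathfrak{t}$ spans an $\mathfrak{sl}_2$-triple precisely in that case; this distinguishes the semisimple roots $R_s(P)$ from the unipotent roots $R_u(P)$.

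From the Lie algebra decomposition, the group-level statement follows from standard structure theory. The subgroup generated by $(\CC^*)^n$ together with the $\mathrm{SL}_2$'s attached to $R_s(P)$ is a connected reductive subgroup of $\Aut_0 X$; the subgroup generated by $\phi^{m,\lambda}$ for $m \in R_u(P)$ is connected, unipotent, normal, and of complementary dimension, so it must be the unipotent radical. Together these give the semi-direct product decomposition in (i) and (ii). For (iii), any $\varphi \in \Aut X$ permutes the finitely many torus-invariant prime divisors, inducing a linear automorphism of $N_\ZZ$ that preserves the fan $\Delta$; conversely, every element of $\Aut \Delta$ lifts to $X$ by permuting the $x_\rho$. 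This yields a section $\Aut \Delta \hookrightarrow \Aut X$ of the quotient $\Aut X \twoheadrightarrow \Aut X / \Aut_0 X$, and hence the factorization $\Aut X = \Aut \Delta \cdot \Aut_0 X$.

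The main obstacles are (a) the lifting step and (b) ruling out spurious root directions. Point (a)---precisely the subtlety addressed in Cox's 2014 erratum \cite{Cox14}---is that one must characterize exactly which graded automorphisms of $S$ descend to biregular self-maps of $X$: such a $\Phi$ must preserve the irrelevant locus $V(B)$ set-theoretically, and the subgroup of those that do is the preimage of $\Aut X$ under the reduction map; missing this condition produces spurious ``automorphisms''. Point (b) requires showing that the weight-space decomposition of $\mathrm{Lie}\,\Aut_0 X$ is indexed only by $R(P)$ and not by some larger lattice set, which uses the simpliciality of $\Delta$ in an essential way: in the non-simplicial case the class group can carry additional relations admitting extra root monomials. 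Once these two technical hurdles are settled, the remainder of the proof is a clean unpacking of the combinatorics of $P$ through root-theoretic bookkeeping.
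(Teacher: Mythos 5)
Your proposal follows the same Cox-style strategy as the paper---homogeneous coordinate ring $S$, the quotient presentation $X=(\spec S\setminus V(B))/G$, lifting via the codimension-$\geq 2$ extension argument, and a root decomposition indexed by $R(P)$---so the overall route is the one the paper takes. There are, however, two places where what you wrote is either a genuine gap or an actual error.

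\medskip
\noindent\textbf{Normality of the unipotent part (parts (i)--(ii)).}
The paper does not argue at the Lie algebra level. It constructs an explicit faithful representation of $\widetilde{\Aut}_0X$ on $\bigoplus_{\alpha\in\mathcal C}S_\alpha$, in which each block $\Phi_\alpha$ has the upper-block-triangular form
$\begin{pmatrix} A_\alpha & B_\alpha \\ 0 & C_\alpha\end{pmatrix}$,
with $A_\alpha\in\mathrm{GL}(|\Delta_\alpha|,\CC)$ and $C_\alpha$ determined by $A_\alpha,B_\alpha$. The group $G_s$ ($B_\alpha=0$) and the subgroup $R_u$ ($A_\alpha=I$) then read off directly, and the normality of $R_u$ and the identification $G_s\cong\prod_\alpha\mathrm{GL}(|\Delta_\alpha|,\CC)$ are immediate from the block pattern being preserved under conjugation. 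Your Lie-algebra picture $\mathrm{Lie}\,\Aut_0 X=\mathfrak t\oplus\bigoplus_{m\in R(P)}\CC\,e_m$ is correct (and your dimension count matches the paper's $\dim\widetilde{\Aut}_0X=\sum|\Delta_\alpha|\dim S_\alpha$ after subtracting $\dim G$), and the bracket observation $[e_m,e_{-m}]\neq 0$ when both $\pm m\in R(P)$ is true. But this only produces $\mathfrak{sl}_2$-triples for the semisimple roots; it says nothing about why the $R_u(P)$-directions are closed under bracketing with $\mathfrak t$, the $R_s(P)$-directions, and each other, i.e., why they form a Lie ideal. That is precisely the step you assert (``so it must be the unipotent radical'') and it is exactly the content the matrix block structure supplies. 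As written, the normality of $\langle \phi^{m,\lambda}: m\in R_u(P)\rangle$ is unproved.

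\medskip
\noindent\textbf{Part (iii) is wrong as stated.}
A general $\varphi\in\Aut X$ does \emph{not} permute the torus-invariant prime divisors; only elements of the normalizer $N((\CC^*)^n)$ do. (On $\PP^1$, the automorphism $[x_0:x_1]\mapsto[x_0+x_1:x_1]$ does not preserve the pair of torus-fixed points.) So there is no map $\Aut X\to\Aut\Delta$ defined the way you propose. The paper's Lemma \ref{PermuteClx} gets around this by first observing that $\varphi T'_X\varphi^{-1}$ is another maximal torus of $\Aut_0X$, then using Theorem \ref{Torustheo} (conjugacy of maximal tori) to produce $\varphi'\in\Aut_0X$ with $\varphi'\varphi T'_X(\varphi'\varphi)^{-1}=T'_X$, and then a further torus correction so that $t\varphi'\varphi$ fixes the identity of the open orbit. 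Only at that point does one get a lattice automorphism preserving $\Delta$, hence an element of $\Aut\Delta$ lying in the same $\Aut_0X$-coset as $\varphi$. You need this conjugation step; the surjectivity of $\Aut\Delta\to\Aut X/\Aut_0X$ does not follow from a divisor-permutation argument.

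\medskip
\noindent One small remark on your point (a): Lemma \ref{IrreIdeal} in the paper shows that on a complete $X$ \emph{every} degree-preserving automorphism of $S$ automatically preserves the irrelevant ideal $B$, so the worry about ``spurious'' graded automorphisms failing to preserve $V(B)$ does not arise in the complete case (and the content of Cox's erratum is a different technical point). That part of your discussion is a red herring.
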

    
    An even more precise version is stated in Theorem \ref{Demazure's structure theorem}. 
    \\

    \begin{remark}
        As Oda remarked (see the quote from his book provided above), it seems that Demazure's theorem should hold also for non-compact toric varieties, but it is not clear to us whether the original treatment states that case explicitly. It does seem that Demazure's article assumes $X$ is smooth. Cox's approach treats the orbifold setting. On the other hand, Demazure's result is not valid for non-compact case. For instance, the group $\aut(X)$ is already not a linear algebraic group for $X=\mbc^2$: Consider the subgroup $U$ of $\aut(\mbc^2)$ consisting of the maps $(z,w)\mapsto(z,w+f(z))$ where $f(z)\in\mbc[z]$. $U$ is isomorphic to $\mbc[z]$ (as an additive group), so $U$ is infinite-dimensional over $\mbc$. The reason Cox's proof deals with the orbifold case easily is that it lifts everything to an affine complex spaces by Theorem \ref{quotient-construction}, and this theorem is valid for simplicial toric varieties (or equivalently, toric orbifolds, see Theorem \ref{Property}).
    \end{remark}

    \begin{remark}
        Bruns--Gubeladze studied the automorphism group of any projective toric variety  \rm{\cite{BG99}}.
        Their results are in a sense parallel to Cox's:  contrary to Cox they require the variety to be projective,
        but they allow the variety to be more singular than orbifold. 
        The first point allows them to  use the information of a (very ample) polytope to describe the automorphism group. They use a different method to construct the toric variety, viewing a toric variety as $\proj S_P$, where $S_P$ is a graded ring determined by a fixed (very ample) polytope $P$ (actually $P$ determines a very ample line bundle on $X$, and $S_P$ is the corresponding projective coordinate ring). On the other hand, Cox uses only the information of the fan to describe the automorphism group, so his proof can treat non-projective varieties. 
        On the flip side, Cox requires the singularity of the variety to be at least simplicial to construct the variety by a geometric quotient (see Theorem \ref{quotient-construction}).
        In a more recent paper, it is claimed that Cox's results could be generalized to remove the simplicial condition \rm{\cite{MSS18}}, with the general idea the same as Cox's: showing the relation between the graded automorphism group of Cox's homogeneous coordinate ring $S$ and $\Aut_0 X$.
    \end{remark}

    \begin{remark}
        There are some interesting results on the class of complete toric varieties X such that a maximal unipotent subgroup $U$ in $\aut(X)$ acts on $X$ with an open orbit (so called \rm{radiant tori variety}). In \rm{\cite{AR17}} and \rm{\cite{APS24}}, the authors give a combinatorial criterion for a toric variety to be radiant and describe the maximal unipotent subgroup $U$ by the roots explicitly.
    \end{remark}

	



\section{Preliminaries}

\subsection{Fans, cones, charts, and associated toric varieties}

Consider a lattice of rank $n$ and its dual lattice 
$$
N, \quad
M:=N^*:=
\mathrm{Hom}(N,\ZZ).
$$
Denote the corresponding $\RR$-vector space and its dual (both isomorphic to $\RR^n$)
$$
N_\RR:=
N\otimes_\ZZ\RR,
\quad
M_\RR:=M\otimes_\ZZ\RR=N_\RR^*. 
$$
A rational convex polyhedral cone in $N_\RR$ takes the form
$$
\sigma=\sigma(v_1,\ldots,v_d):=
\Big\{
{\textstyle\sum}_{i=1}^da_iv_i\,:\, a_i\ge0, v_i\in N
\Big\}.
$$
The rays $\RR_{+}v_i, i\in\{1,\ldots,d\}$ are called the generators
of the cone \cite[p. 9]{Ful93}. 
They are (1-dimensional) cones themselves, of course.
Our convention will be that the 
$$
v_i, \quad i\in\{1,\ldots,d\}, \q
\hbox{are primitive elements 
of the lattice $N$},
$$
which means there is no $m\in \NN\setminus\{1\}$ such that 
$v_i/m\in N$.
A cone is called strongly convex if
$\sigma\cap -\sigma =\{0\}$ \cite[p. 14]{Ful93}.
A face of $\sigma$ is any intersection of $\sigma$
with a supporting hyperplane.

\bdefn
\label{FanDefn}
A {\it fan} $\Delta=\{\sigma_i\}_{i=1}^\delta$ in $N$ is a finite set of rational strongly convex polyhedral
cones $\sigma_i$ in $N_\RR$ such that: 

\smallskip
\noindent 
(i) each face of a cone in $\Delta$ is also (a cone) in $\Delta$,

\smallskip
\noindent
(ii) the intersection of two cones in $\Delta$ is a face of each. 
\edefn

Such a fan gives rise to a toric variety $X_\Delta$: 
Each cone $\sigma_i$ in $\Delta$ has a dual cone in $M_\RR$
\[\sigma^*_i=\{m\in M_\RR:\, \lan m, v\ran\geqs0, \forall v\in\sigma_i \} \]
which gives rise to an affine toric variety $\spec\mbc[\sigma^*_i\cap M]$ \cite[\S1.3]{Ful93}. This serves as a (Zariski open) chart in $X_\Delta$ with the transition
between the charts constructed by (i) and (ii) above \cite[p. 21]{Ful93}.
For instance, the zero cone corresponds to the open dense orbit
$T_X\cong(\CC^*)^n$ \cite[p. 64]{Cannas}, and more generally there is a
bijection between the cones $\{\sigma_i\}_{i=1}^\delta$ and the orbits
of the torus action in $X_\Delta$
\cite[Proposition 5.6.2]{Cannas}, with 
the non-zero cones corresponding precisely to all the toric subvarieties of $X_\Delta$ of positive codimension. 
$T_X$ acts naturally on itself, and this toric action extends to $X$. We can view $M$ as the character group of $T_X$. Namely
$$
M=\{m:T_X\to\mbc^*:\, m \hbox{ is a morphism and a group homomorphism}. \}
$$

\subsection{Regular, simplicial, and complete fans}

Here are some basic terminology about cones and fans:

\begin{definition}\label{fanTermi}
    A cone is 
    \begin{itemize}
        \item[$\bullet$] smooth (or regular) if it is generated by a subset of a basis of $N$.
    \item[$\bullet$] simplicial if it is generated by a subset of a basis of $N_\RR$.
    \end{itemize}

    A fan is 
    \begin{itemize}
        \item[$\bullet$]  smooth (or regular) if every cone in it is smooth (regular).
    \item[$\bullet$] simplicial if every cone in it is simplicial.
    \item[$\bullet$] complete if the support of it (i.e., the union of all the cones in it) is the whole $N_\RR$.
    \end{itemize}

\end{definition}

Then we have the following results:

\begin{theorem}[{\cite[Theorem 3.1.19]{CLS11}}]\label{Property}
    Let $X$ be the toric variety determined by a fan $\Delta$, then:
    \begin{enumerate}[$(\rm i)$]
        \item $X$ is compact (under the complex analytic topology, or equivalently proper under the Zariski topology) if and only if $\Delta$ is complete. 
        \item $X$ is smooth (regular) if and only if $\Delta$ is smooth (regular).
        \item $X$ is an orbifold if and only if $\Delta$ is simplicial.
        \end{enumerate}
\end{theorem}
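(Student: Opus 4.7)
The plan is to prove the three equivalences separately, each by a local analysis on the affine charts $U_\sigma=\spec\mbc[\sigma^*\cap M]$, together with one global argument for (i).

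For (i), I would apply the valuative criterion of properness with test DVR $\mathcal{O}=\mbc[[t]]$ with fraction field $K=\mbc((t))$. Any morphism $\spec K\to T_X\hookrightarrow X_\Delta$ into the open torus is classified, via the $t$-adic valuation, by an element $v\in N=\Hom(M,\ZZ)$, and such a morphism extends (necessarily uniquely, by separatedness, which is clear since each chart is affine and the gluing is along open subschemes) to $\spec\mathcal{O}$ exactly when $v$ lies in $\sigma\cap N$ for some $\sigma\in\Delta$. Hence properness is equivalent to $N\subseteq|\Delta|$, and by density of $N$ in $N_\RR$ together with closedness of $|\Delta|$, this is equivalent to $|\Delta|=N_\RR$. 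The analytic-topology compactness claim then follows from the standard GAGA-type equivalence of properness and compactness for separated finite-type $\mbc$-schemes.

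For (ii) and (iii), smoothness and the orbifold property are both local, so it suffices to show that for each individual cone $\sigma$, the chart $U_\sigma$ is smooth (resp.\ an orbifold) iff $\sigma$ is regular (resp.\ simplicial). The easier directions are by construction. If $\sigma$ is generated by $e_1,\ldots,e_k$ extending to a $\ZZ$-basis of $N$, then a direct semigroup computation gives
\[
U_\sigma\cong\spec\mbc[x_1,\ldots,x_k,y_1^{\pm1},\ldots,y_{n-k}^{\pm1}]\cong\mbc^k\times(\mbc^*)^{n-k},
\]
which is smooth. If $\sigma$ is only simplicial with ray generators $v_1,\ldots,v_k$, choose a finite-index sublattice $N'\subseteq N$ in which $v_1,\ldots,v_k$ extend to a $\ZZ$-basis; then $\sigma$ is regular with respect to $N'$, so $U_{\sigma,N'}$ is smooth, and $U_\sigma\cong U_{\sigma,N'}/(N/N')$ is a finite quotient of a smooth variety, hence an orbifold.

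The main obstacle is the harder direction in (ii) and (iii): showing that smoothness of $U_\sigma$ forces regularity of $\sigma$, and that the orbifold property of $U_\sigma$ forces simpliciality. After reducing to $\sigma$ full-dimensional (otherwise one splits off a $(\mbc^*)^{n-\dim\sigma}$ factor), the variety $U_\sigma$ has a unique $T_X$-fixed point $x_\sigma$, and the Zariski cotangent space at $x_\sigma$ is canonically isomorphic to the quotient of the maximal monomial ideal of $\mbc[\sigma^*\cap M]$ by its square; a basis for this quotient is given by the \emph{minimal} Hilbert generators of the semigroup $\sigma^*\cap M$. Smoothness at $x_\sigma$ forces this minimal generating set to have exactly $n$ elements, and dualising yields that the primitive ray generators of $\sigma$ must be a $\ZZ$-basis of $N$, i.e., $\sigma$ is regular. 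For the simplicial case, one performs the same tangent-space analysis after a local uniformization: if $\sigma$ were not simplicial it would have strictly more than $\dim\sigma$ extremal rays, and no finite toric cover by a smooth affine toric variety could collapse them down to the correct dimension, contradicting the orbifold hypothesis at $x_\sigma$.
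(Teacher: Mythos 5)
The paper does not prove this theorem; it is stated purely as a citation to \cite[Theorem 3.1.19]{CLS11}, so there is no ``paper's own proof'' to compare against. What you have given is therefore a genuine proof sketch, and the comparison should be against what is actually needed to close the argument.

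Parts (i) and (ii) are essentially sound. For (i) the reasoning is the standard one-parameter-subgroup/valuative argument, but note two small imprecisions: (a) $N$ is \emph{not} dense in $N_\RR$; what you actually use is that $N_\QQ$ is dense, combined with the fact that $|\Delta|$ is a finite union of rational cones and hence a cone over its rational points, so $N\subseteq|\Delta|\Leftrightarrow|\Delta|=N_\RR$. (b) Checking the valuative criterion only for $\spec K\to T_X$ is not automatically enough; one must reduce the general case $\spec K\to X_\Delta$ to this one, e.g.\ by noting that the generic point lands in some torus orbit $O(\tau)$, that $\overline{O(\tau)}$ is itself a toric variety whose fan (the star of $\tau$) is complete when $\Delta$ is, and inducting. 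Part (ii) is fine: the identification of the Zariski cotangent space at the torus-fixed point with the $\CC$-span of the Hilbert basis of $\sigma^*\cap M$, plus the observation that a full-dimensional strongly convex $\sigma^*$ has $\sigma^*\cap M$ generating $M$ as a group, does give that $n$ minimal generators must be a $\ZZ$-basis.

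The genuine gap is in the hard direction of (iii), ``orbifold $\Rightarrow$ simplicial.'' You say that ``no finite toric cover by a smooth affine toric variety could collapse them down,'' but the orbifold hypothesis only gives you a finite cover of a neighbourhood of $x_\sigma$ by a smooth analytic space with a finite group acting; nothing says the cover or the group action is toric, so the ray-count argument does not apply as stated. To make a toric reduction work you would need to equivariantly average the chart, which is nontrivial. The cleanest route here is different: a $V$-manifold/orbifold is $\QQ$-factorial (every Weil divisor has a Cartier multiple, seen by pulling back to the smooth local cover and pushing down an invariant section), and for affine toric varieties $\QQ$-factoriality of $U_\sigma$ is equivalent to $\sigma$ being simplicial, since the class group of $U_\sigma$ is presented by the ray relations and is torsion exactly when the rays are linearly independent. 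This is the argument CLS11 in effect uses, and your proposal needs it (or an equivalent) to close (iii).
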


Thus, we say that a toric variety is complete or simplicial if the corresponding fan is complete or simplicial, respectively.

\begin{example}
    The weighted projective space $\mathbb{P}(1, 1, 2)$ is a complete, simplicial but non-smooth toric variety.

    For a cone $\sigma$, the corresponding affine toric variety $X_\sigma$ is an incomplete toric variety.
\end{example}

{\bfseries In this paper, we always assume the toric variety and the fan is complete.}

\subsection{Polytopes associated to fans}

Let $\Delta$ be a complete fan and let $\sigma_1,\ldots,\sigma_d$ be its 1-dimensional cones
(i.e., rays) generated by primitive generators 
\beq
\lb{Delta1Eq}
\Delta_1:=
\{v_1,\ldots,v_d\}\subset N,
\eeq
so $\sigma_i=\RR_{+}v_i$, and set
\beq
\lb{PDef2Eq}
P=\bigcap_{i=1}^d\Big\{y\in M_\RR\,:\, \langle y, -v_i\rangle \le 1\Big\}
=\{h_{-\Delta_1}\le 1\}=\{y\in M_\RR\,:\, \max_j\langle -v_j,y\rangle\le1\}.
\eeq

Note that $P$ contains the origin in its interior.
Also note that \eqref{PDef2Eq} is the standard convention since
then lattice points of $P$ correspond to monomials%
. Batyrev--Selivanova use $-P$ instead.




Then we can define the \emph{roots} of a toric variety $X$ and the associated polytope $P$ (see Figure \ref{Fig for R(P)}). 

\begin{definition}\label{roots}
    The \emph{roots} of a toric variety $X$ and the associated polytope $P$ are the set
    $$
    R(P):=\{m\in M\,:\,\exists v\in\Delta_1 {\ with\ } \langle m,v\rangle=-1 {\ and\ } \langle m,v'\rangle\geqslant 0\; \forall v'\neq v\in\Delta_1 \}.
    $$
    It can be divided into two parts:
    $$
    R_s(P):=-R(P)\cap R(P),\;\;\;R_u:=R(P)\setminus R_s(P).$$
    We call the elements in $R_s(P)$ the \emph{semisimple roots} of $X$ and the associated polytope $P$.
\end{definition}

$R(P)$ are exactly the set of lattice points contained in relative interiors of the codimension-1 faces of $P$ since: $\langle m,v\rangle\geqs -1$ for all $v\in \Delta_1$ means $m$ is in the boundary of $P$, and $\langle m,v_i\rangle=-1$ means that $m$ is in the codimension-1 faces of $P$ vertical to $v_i$, hence by definition any $m\in R(P)$ is contained in exactly one codimension-1 face of $P$, thus the relative interior of this face.

\begin{remark}
    In general, the polytope defined above may not have good properties. The vertices of $P$ may even not be the lattice points in $M$. But when $X$ is a Gorenstein variety, which means that the canonical divisor of $X$ is Cartier, $P$ is always a lattice polytope (i.e., every vertex of $P$ is a lattice point).  See \cite[Theorem 8.3.4]{CLS11}. Especially, when $X$ is smooth, $P$ is always a lattice polytope.

When $X$ is Fano, the codimension-$1$ facets of $P$ $$
\{y\in P\,:\,\langle y, -v_i\rangle = 1\}
$$
are in one-to-one correspondence with the $1$-dimensional cones generated by $v_i$ in $\Delta$, thus the irreducible toric divisors in $X$.
\end{remark}

\subsection{Automorphisms of toric varieties}

For a compact toric variety $X$, we denote the automorphism group of $X$ by $\Aut X$, and the connected component of $X$ that contains identity by $\Aut_0 X$.

As standard, we allow the slightly more flexible situation
of any (and not just a maximal) compact subgroup of the normalizer
$$N((\CC^*)^n)$$
of the complex torus $(\CC^*)^n$ in $\Aut X$.
Denote 
by 
\beq
\lb{AutPEq}
\Aut P\subseteq \mathrm{GL}(M)\cong \mathrm{GL}(n,\ZZ)
\eeq
the subgroup of the automorphism group of the lattice $M$ that leaves the polytope $P$ \eqref{PDef2Eq} invariant.
It is necessarily a finite group%
. In fact, $\Aut P$ is isomorphic to the quotient of the normalizer $N((\CC^*)^n)$ of the
complex torus $(\CC^*)^n$ in $\Aut X$ by $(\CC^*)^n$, so that 
$N((\CC^*)^n)$  consists
of finitely many components each isomorphic to a complex torus \cite[Proposition 3.1]{BS99}.
We note in passing that if $X$ is smooth and Fano, then $\Aut X$ is the product
$$
\Aut X=\Aut P \Aut_0X,
$$
and refer to Proposition \ref{AutClx} and Lemma \ref{PermuteClx} for a proof.

Similarly, we denote by
\begin{equation}\lb{AutDeltaEq}
    \Aut \Delta\subseteq \mathrm{GL}(N)\cong \mathrm{GL}(n,\ZZ)
\end{equation}
the subgroup of the automorphism group of the lattice $N$ that preserves the fan $\Delta$. It is a finite group, since every element in it gives a permutation of $\Delta_1$ \eqref{Delta1Eq} and this embeds $\Aut\Delta$ into the permutation group of $\Delta_1$. 

\begin{remark}
    It is worth noting that the $\Aut P$ we define above may be different from the subgroup of $\mathrm{GL}(M\otimes \mathbb{R})$ that preserves the polytope $P$ when $P$ is not a lattice polytope. And so does $\Aut \Delta$. 

    For example, if $P$ is the convex hull of $\{(0,1),(0,-1),(1/2,0),(-1/2,0)\}$, then linear transformation $$\begin{pmatrix}
        0 & 2 \\
        1/2 & 0
    \end{pmatrix}$$
    preserves $P$, but it is not in $\mathrm{GL}(M)$.
\end{remark}

When $X$ is Fano, the dual map from $\mathrm{GL}(N)$ to $\mathrm{GL}(M)$ induces an isomorphism from $\Aut\Delta$ to $\Aut P$:

\begin{lemma}\label{DualIso}
    If $X$ is Fano, then $\Aut\Delta\cong\Aut P$.
\end{lemma}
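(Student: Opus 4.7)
The plan is to write down the natural dual map and then verify, invoking the Fano hypothesis at the key step, that it restricts to a bijection. Concretely, define $\Phi\colon\mathrm{GL}(N)\to\mathrm{GL}(M)$ by $\Phi(\phi):=(\phi^\vee)^{-1}$, where the adjoint $\phi^\vee\colon M\to M$ is characterized by $\langle\phi^\vee m,v\rangle=\langle m,\phi v\rangle$. Since $\Phi$ is already a group isomorphism $\mathrm{GL}(N)\to\mathrm{GL}(M)$, the entire content of the lemma is that $\Phi$ restricts to a bijection $\Aut\Delta\to\Aut P$.

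For the inclusion $\Phi(\Aut\Delta)\subseteq\Aut P$, take $\phi\in\Aut\Delta$. As a lattice automorphism preserving $\Delta$, $\phi$ permutes the primitive ray generators $\Delta_1=\{v_1,\ldots,v_d\}$; write $\phi(v_i)=v_{\tau(i)}$. The adjoint relation yields $\langle\Phi(\phi)(y),w\rangle=\langle y,\phi^{-1}(w)\rangle$ for all $y\in M_\RR$ and $w\in N_\RR$, so for $y\in P$ and each $i$,
$$\langle\Phi(\phi)(y),-v_i\rangle=\langle y,-\phi^{-1}(v_i)\rangle=\langle y,-v_{\tau^{-1}(i)}\rangle\leqs1$$
by \eqref{PDef2Eq}; hence $\Phi(\phi)(y)\in P$, as desired.

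The converse inclusion $\Aut P\subseteq\Phi(\Aut\Delta)$ is where the Fano hypothesis enters essentially. Given $\psi\in\Aut P$, set $\phi:=\Phi^{-1}(\psi)$, so that $\langle\psi(y),\phi(v)\rangle=\langle y,v\rangle$ for all $y\in M_\RR$, $v\in N_\RR$. By the remark following Definition \ref{roots}, the Fano assumption guarantees that the facets $F_i=\{y\in P\,:\,\langle y,v_i\rangle=-1\}$ of $P$ are in bijection with the rays $\RR_{+}v_i$ of $\Delta$; in particular $P$ is full-dimensional with the origin in its interior, and the normal fan of $P$ equals $\Delta$. Since $\psi$ is a lattice isomorphism preserving $P$, it permutes these facets, say $\psi(F_i)=F_{\tau(i)}$. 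Then for $y\in F_i$ one computes $\langle\psi(y),\phi(v_i)\rangle=\langle y,v_i\rangle=-1$ and also $\langle\psi(y),v_{\tau(i)}\rangle=-1$, so both functionals $\phi(v_i),v_{\tau(i)}\in N$ take the value $-1$ on the affine hyperplane spanned by the facet $F_{\tau(i)}$. Since this hyperplane does not contain the origin, the two functionals must coincide, giving $\phi(v_i)=v_{\tau(i)}$.

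The remaining step is to upgrade this permutation of rays to a permutation of the full fan $\Delta$. For this, $\psi$ preserves the entire face lattice of $P$, not just facets; under the normal-fan correspondence, a subset $\{v_{i_1},\ldots,v_{i_k}\}$ spans a cone of $\Delta$ if and only if the intersection $F_{i_1}\cap\cdots\cap F_{i_k}$ is a nonempty face of $P$. Since the induced permutation $\tau$ respects this incidence data, $\phi$ sends cones to cones, whence $\phi\in\Aut\Delta$. The main obstacle is precisely this last step: upgrading the bijection between rays to a bijection between all cones. This is exactly where the Fano condition is indispensable, since in the non-Fano case $P$ can be lower-dimensional or its normal fan can fail to equal $\Delta$, and the clean match between facets and rays used above would break down.
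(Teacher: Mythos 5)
Your proof is correct and follows essentially the same route as the paper: write down the dual map, check it sends $\Aut\Delta$ into $\Aut P$, then use that the Fano hypothesis makes the facets of $P$ correspond bijectively to the rays of $\Delta$ to run the converse. (The paper works with the plain transpose $\mathcal{P}\mapsto\mathcal{P}^*$ rather than your inverse-transpose $\Phi$; since this is merely an anti-isomorphism of groups the choice is cosmetic.)

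One place where you are actually \emph{more} careful than the paper's proof: the paper's converse argument stops after showing $\mathcal{P}$ permutes $\Delta_1$ and then declares ``which shows $\mathcal{P}\in\Aut\Delta$,'' leaving implicit the step that a permutation of rays must preserve all cones of $\Delta$. You explicitly address this by appealing to the normal-fan correspondence, which is the right idea: since $-K_X$ is ample, $\Delta$ is the normal fan of $P$, so $\psi$'s action on the face lattice of $P$ dictates that $\phi$ sends each normal cone $\sigma_F$ to $\sigma_{\psi(F)}$. A small caveat on your phrasing: the biconditional ``$\{v_{i_1},\ldots,v_{i_k}\}$ spans a cone iff $F_{i_1}\cap\cdots\cap F_{i_k}\neq\emptyset$'' is only exactly right when $P$ is simple (equivalently, $X$ an orbifold); for non-simple $P$ the intersection can be a nonempty face without the chosen subset being precisely the ray set of the corresponding cone. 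The argument you actually need is the slightly weaker statement that the ray set of $\sigma_F$ is $\{v_j : F\subseteq F_j\}$ and that $\phi$ sends this set to the ray set of $\sigma_{\psi(F)}$, so $\phi(\sigma_F)=\sigma_{\psi(F)}$; this holds for any Fano $X$. Adjusting that one sentence makes the proof airtight in the full generality of the lemma.
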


\begin{proof}
    For an $\mathcal{P}\in \Aut\Delta$, the dual of it is defined by
    \[\mathcal{P}^*\in \mathrm{GL}(M):\lan m,\mathcal{P}n\ran=\lan\mathcal{P}^*m,n\ran\, \forall m\in M, n\in N \]
    
    For a $\mathcal{P}\in\Aut\Delta$ and any $m\in P\cap M_\RR, \sigma_i\in\Delta_1$, we have
    \[\lan \mathcal{P}^*m,v_i\ran=\lan m,Pv_i\ran\geq -1 \]
    since $Pv_i$ is a primitive vector in $\Delta_1$. This implies that $\mathcal{P}^*m\in P$, which shows $\mathcal{P}^*$ preserves the polytope $P$ and so $\mathcal{P}^*\in\Aut P$.

    Conversely, if $\mathcal{P}^*\in\Aut P$, we want to show that $\mathcal{P}v_i\in\Delta_1$. Because $v_i\in\Delta_1$ and $X$ is Fano, there is a codimension-$1$ facet $F'\sse P$ that satisfies $\lan m,v_i\ran=-1$ for all $m\in F'\cap M_\RR$. Then we have
    \[\lan m,\mathcal{P}v_i\ran=\lan \mathcal{P}^*m,v_i\ran=-1,\, \forall m\in (\mathcal{P}^*)^{-1}F' \]
    And $(\mathcal{P}^*)^{-1}F'$ is another codimension-$1$ facet of $P$ since $\mathcal{P}^*$ preserves the polytope $P$. Thus, $\mathcal{P}v_i$ is another primitive vector in $\Delta_1$, which shows $\mathcal{P}\in \Aut\Delta$. \end{proof}

   There is a special subgroup of $\Aut P$, which will play an important role later. We define:
    \begin{definition}\label{Aut0P}
         $\Aut_0P$ is the subgroup
        $$\{\mathcal{P}\in\Aut P: -F\cap \mathcal{P}F\cap R(P)\neq\emptyset,\, \forall F\hbox{ a codimension-$1$ facet of $P$ satisfying $F\neq\mathcal{P}F$} \}.$$
    \end{definition}

    We will see that (Proposition \ref{WeylGroup}) this is exactly the subgroup of $\Aut P$ that induces the identity map on the divisor class group of $X$.

For every $k\in\NN$, there is an $\Aut X$ action on $H^0(X,-kK_X)$. To get an induced linear action on $M_\QQ$
we must restrict to the normalizer $N((\CC^*)^n)$ of the complex torus $(\CC^*)^n$
in $\Aut X$. The representation of $(\CC^*)^n$ on $H^0(X,-kK_X)$
splits into 1-dimensional spaces, whose generators are called the monomial
basis. There is a one-to-one correspondence between the monomial basis of 
$H^0(X,-kK_X)$ and points in $kP\cap M$, and the  
quotient $N((\CC^*)^n)/(\CC^*)^n$ is a linear group,
that can be identified with $\Aut P$
\eqref{AutPEq}. Since $P$ is defined as the convex hull
of vertices in $M$ it follows that $\Aut P$ is finite. 
Alternatively, this can be seen by observing that 
$N_\RR$ is canonically isomorphic to the quotient of  $(\CC^*)^n$ 
by its
maximal compact subgroup $(S^1)^n$ \cite[p. 229]{BS99}
and the induced action on $M_\RR$ is then defined
by transposing via the pairing.

\subsection{Reductive Lie groups and maximal tori}

We will use some basic concepts and facts about Lie group in this survey. We list them in this subsection without proof. Readers who are interested in the details can refer to standard textbooks on Lie group, such as \cite{Ha15} and \cite{HN12}.


 \begin{definition}
 \label{reductivegp}

\noindent$\bullet\;$ A complex Lie group is a \emph{linear algebraic group} if it can be embedded into $\mathrm{GL}(k, \mbc)$ for some $k$ as a closed subgroup.

\noindent$\bullet\;$ A \emph{faithful representation} of a group $G$ is a representation (that is, a morphism from $G$ to $\mathrm{GL}(V)$ where $V$ is a linear space) with trivial kernel.

\noindent$\bullet\;$
    Let $G$ be a connected linear algebraic group. An element $g\in G$ is called \emph{unipotent} if for a faithful representation $\varphi$, $\varphi(g)-I$ is unipotent (i.e., $(\varphi(g)-I)^k=0$ for some $k>0$). A subgroup of $G$ is called \emph{unipotent} if every element of it is unipotent.

\noindent$\bullet\;$    
    The \textit{unipotent radical} of a connected linear algebraic group $G$ is the largest connected unipotent normal subgroup of $G$. $G$ is called \textit{reductive} if its unipotent radical is trivial.
 \end{definition}

\begin{definition}\label{Weylgp}
    \noindent$\bullet\;$ For a complex Lie group $G$, a \textit{torus} in $G$ is a Lie subgroup isomorphic to $(\mbc^*)^k$. All the tori in $G$ are partially ordered by the inclusion relation. A maximal element under this partial order is called a \textit{maximal torus} of $G$.

    \noindent$\bullet\;$ Let $T$ be a torus in a (connected) reductive complex Lie group $G$, $N(T)$ be its normalizer, then we call
    $$
    W(T):=N(T)/T
    $$
    the \textit{Weyl group} of $G$ with respective to the torus $T$.
\end{definition}

\begin{example}\label{GLWeyl}
    As a simple example, if $G=\mathrm{GL}(n,\mbc)$, we can choose a maximal torus $T$ formed by all diagonal matrices. With respect to this torus, the Weyl group $W$ is the permutation group $S_n$ of the $n$ coordinates. Hence, $W$ can be identified with the subgroup of $G$ consisting of all permutation matrices naturally.
\end{example}

\begin{theorem}[{\cite[Theorem 11.9]{Ha15}}]\label{Torustheo}
    Let $G$ be a linear algebraic group, then all maximal tori in $G$ are conjugate to each other.
    
\end{theorem}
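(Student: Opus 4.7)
The plan is to reduce, in the classical spirit of algebraic group theory, to the conjugacy problem inside a single connected solvable subgroup, by way of the existence and conjugacy of Borel subgroups. Since every torus is connected, I may replace $G$ by its identity component $G^0$. Each maximal torus $T$ is itself a connected solvable algebraic subgroup, so it lies in some \emph{Borel subgroup} $B \supseteq T$, by which we mean a maximal connected solvable subgroup of $G$.

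The first main step is to show that any two Borel subgroups are conjugate. One constructs $B$ via a faithful representation of $G$ together with a complete flag stabilized by $B$; the quotient $G/B$ is then a complete (projective) algebraic variety. Given another Borel $B'$, the Borel fixed point theorem --- any action of a connected solvable algebraic group on a complete variety has a fixed point --- applied to the $B'$-action by left translation on $G/B$ produces some $g \in G$ with $g^{-1} B' g \subseteq B$, and maximality then forces equality. In particular, for maximal tori $T_1 \subseteq B_1$ and $T_2 \subseteq B_2$, we may conjugate so that $T_1$ and $T_2$ both sit inside a common Borel $B$.

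The second main step is conjugacy inside a connected solvable linear algebraic group $B$. By the Lie--Kolchin theorem, $B$ triangularizes in some faithful representation, exhibiting the unipotent radical $U$ and a semidirect decomposition $B = T_0 \ltimes U$ for a chosen maximal torus $T_0$. Any maximal torus $T' \subseteq B$ then maps isomorphically onto $B/U \cong T_0$ (by maximality plus a dimension count), so corresponds to a section of the quotient map $B \to B/U$. A filtration of $U$ by normal subgroups with one-dimensional additive quotients, combined with the vanishing of the relevant cohomology for a torus acting algebraically on a vector group, shows inductively that any two such sections differ by conjugation by an element of $U$. This yields $u \in U$ with $u T' u^{-1} = T_0$, which is what we wanted.

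The main obstacle in this plan is supplying the completeness of the flag variety $G/B$ underlying the Borel fixed point theorem; that ingredient is nontrivial and rests on constructing $B$ as the stabilizer of a full flag inside a well-chosen faithful representation of $G$. Once it is in hand, the remaining steps are essentially formal consequences of the structure theory of connected solvable groups. Alternatively, a compact real form approach --- passing to a maximal compact subgroup $K \subseteq G$ with $G^0 = K \exp(i\mathfrak{k})$ and invoking conjugacy of maximal tori in connected compact Lie groups (proved in Hall's book via the Weyl integration formula and a degree argument) --- gives a parallel route that trades the algebro-geometric machinery for differential-topological input, and this is presumably the proof referenced by the citation.
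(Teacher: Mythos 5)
Your primary, fleshed-out argument (Borel subgroups via the flag variety and Borel fixed point theorem, then conjugacy of maximal tori inside a connected solvable group via Lie--Kolchin and a cohomological/inductive argument on the unipotent radical) is the classical algebraic-geometric proof found in Borel's and Humphreys' texts. It is correct as a skeleton, but it is not the route the paper takes. The paper cites Hall's book, where Theorem~11.9 is stated and proved for \emph{compact} Lie groups (via the Weyl integration formula and a mapping-degree argument), and the paper's remark immediately after the statement explains how to transport it: one uses that a maximal torus meets the unipotent radical trivially to reduce to the reductive quotient, and then uses that a connected complex reductive group is the complexification of a compact real form (citing Hilgert--Neeb and Nagata) to invoke Hall's compact-group result. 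You do identify exactly this compact-real-form route in your final paragraph, and you are right that it is ``presumably the proof referenced by the citation,'' but you leave it as an aside rather than developing it.

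The two approaches buy different things. The Borel-subgroup argument is purely algebraic, works over any algebraically closed field, and needs the substantial machinery of flag varieties, completeness of $G/B$, and the structure theory of connected solvable algebraic groups. The compact-form argument is specific to $\CC$, trades that machinery for differential-geometric input (existence of a compact real form, a degree argument), and has the advantage of piggy-backing directly on the cited reference; its cost is that the reduction from a general linear algebraic group to the reductive case --- which the paper dispatches in one clause (``the unipotent radical intersects any maximal torus trivially'') --- still secretly requires the same conjugacy statement in a connected solvable group (or a Levi-type decomposition), since knowing the images of two maximal tori in $G/R_u(G)$ are conjugate only reduces one to the case $T_1 R_u = T_2 R_u$, which is solvable. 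So neither your outline nor the paper's remark is a complete proof from scratch, but both are accurate pointers to standard arguments, and you correctly flagged the one the paper actually has in mind.
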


\begin{remark}
    In \cite{Ha15}, the above theorem is proved for compact (real) Lie group. However, combining the fact that any reductive complex Lie group can be obtained from a compact real Lie group by "complexification" (it is well known but nontrivial, see \cite[Theorem 15.3.11]{HN12} and \cite[Theorem 3]{Na61}) and the unipotent radical intersects with any maximal tori trivially, we can translate the results there to the above theorem.
\end{remark}




\section{Symmetry notions for toric Fanos}
\label{Symmetry notions for toric Fanos}

In this section we discuss the classical notions of symmetry of toric Fano manifolds introduced in Definition \ref{SymmDef}. Standard references are \cite[Section 1]{BS99}, \cite[Section 2]{Mab87}, \cite[p.581]{Dem70}, \cite[Section 1]{NP11}.


A few words about how these notions are referred to in the literature.
Batyrev--Selivanova
simply call their notion ``symmetric" \cite[p. 226]{BS99}; Nill
calls centrally lattice  symmetric ``semi-simple" \cite[Section 3]{Nil06}. Mabuchi refers to $R(P)$ as the roots of $M$ with respect to the fan $\Delta$ \cite[Section 2]{Mab87}.

Let us illustrate these notions with some examples. 
Example \ref{BSsymmnotsymmExam} shows that these three notions are not all identical.
However, Examples \ref{BSsymmnotsymmExam}--\ref{PtwoallsymmExam} leave open the possibility that Batyrev--Selivanova
symmetry is the same as central lattice  symmetry.

\bexam
\lb{BSsymmnotsymmExam}
For $\PP^2$, $\Aut P=S_3$ so $P$ is Batyrev--Selivanova
symmetric (see Figure \ref{Fig for R(P)}). It is also centrally lattice  symmetric since
$$R(P)=\{(\pm1,0),(0,\pm1),(\mp1,\pm1)\}=-R(P).$$
But $P$ is not centrally symmetric, i.e., $P\not=-P$. 
\eexam

\bexam
\lb{PtwoallsymmExam}
For $\PP^2$ blown-up at a point,
$$R(P)=\{(\mp1,\pm1),(1,0),(0,1)\}\not=-R(P).$$
Also $P\not=-P$. Finally, $P$  is not Batyrev--Selivanova
symmetric as $\Aut P$ fixes all points $\{(n,n)\,:\, n\in\ZZ\}$
(see Figure \ref{Figure of blowup 1,2 points}).
\eexam

In fact, the three symmetry notions of Definition \ref{SymmDef} are all distinct. They are related as follows.

\begin{proposition}
\label{relation of three symm notions}
The relationship between the symmetry notions in Definition \ref{SymmDef} is:\hfill\break

\noindent $P$ is centrally symmetric     $\Rightarrow$
$P$ is Batyrev--Selivanova symmetric     $\Rightarrow$
$P$ is centrally lattice symmetric.
Moreover, none of the implications can be reversed.
\end{proposition}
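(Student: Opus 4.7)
The argument splits into the two implications and a verification of strictness.

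The first implication, $(i)\Rightarrow(ii)$, is essentially formal: if $P=-P$ then $-\mathrm{Id}\in\Aut P$, so any $m\in M$ fixed by $\Aut P$ must satisfy $m=-m$, and hence $m=0$. I would dispose of this in a single line.

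The substance of the proposition lies in $(ii)\Rightarrow(iii)$. My plan is to reduce the claim to the vector identity
\[
s:=\sum_{v\in\Delta_1}v=0\q\text{in }N.
\]
To obtain this identity I would first pass from the $\Aut P$-action on $M$ to the $\Aut\Delta$-action on $N$: by Lemma \ref{DualIso} (using the Fano hypothesis) there is an isomorphism $\Aut P\cong\Aut\Delta$ via duality, and standard character-theoretic averaging yields $\dim V^G=\dim(V^*)^G$ for any finite group $G$ acting rationally on a finite-dimensional $\QQ$-vector space $V$. Applied to $V=M_\QQ$, this shows that Batyrev--Selivanova symmetry of $P$ is equivalent to $N^{\Aut\Delta}=\{0\}$. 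Since $\Aut\Delta$ permutes $\Delta_1$, the vector $s$ is automatically $\Aut\Delta$-invariant and therefore vanishes. Given $m\in R(P)$ with distinguished ray generator $v_0$ (so $\langle m,v_0\rangle=-1$ and $\langle m,v\rangle\in\ZZ_{\geqs0}$ for $v\neq v_0$), pairing $m$ against $s$ yields $\sum_{v\neq v_0}\langle m,v\rangle=1$, and integrality combined with nonnegativity forces exactly one summand $\langle m,v_1\rangle$ to equal $1$ with the rest vanishing. A direct check against Definition \ref{roots} then shows that $-m\in R(P)$ with distinguished ray $v_1$, yielding $R(P)=-R(P)$.

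For strictness, Example \ref{BSsymmnotsymmExam} already exhibits $\PP^2$ as a witness for $(i)\not\Leftarrow(ii)$. The reverse implication $(ii)\not\Leftarrow(iii)$ is the main obstacle I anticipate: one must produce a toric Fano manifold whose set of lattice roots is symmetric under negation but whose $\Aut P$ fixes a nonzero lattice vector. By the equivalences of Theorem \ref{relation of symm notions on metric level}, any such example must satisfy $R(P)=-R(P)$ while $\Bc(P)\neq 0$ (equivalently, admit no K\"ahler--Einstein metric), which rules out the simplest toric Fanos. I would either borrow such an example from the work of Nill or Batyrev--Selivanova, or produce one by searching among higher Picard-rank Fanos whose fan carries just enough symmetry to render $R(P)=-R(P)$ while still leaving a fixed lattice direction.
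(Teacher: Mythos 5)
Your treatment of $(i)\Rightarrow(ii)$ coincides with the paper's Lemma~\ref{(i)=>(ii)}. Your proof of $(ii)\Rightarrow(iii)$, however, takes a genuinely different and substantially more elementary route. The paper embeds this implication into Theorem~\ref{relation of symm notions} and proves it via a long chain of deep results: Batyrev--Selivanova symmetry forces $\Bc_k(P)=0$ for all $k$, hence $\Bc(P)=0$, hence $X$ is K\"ahler--Einstein by Wang--Zhu, hence $\Aut_0X$ is reductive by Matsushima, hence $R(P)=-R(P)$ by Demazure's structure theorem (Lemma~\ref{lattice-centrally-symm-reductive}). Your argument instead stays entirely inside linear algebra and lattice combinatorics: Lemma~\ref{DualIso} together with the character identity $\dim V^G=\dim(V^*)^G$ translates Batyrev--Selivanova symmetry into $N_\QQ^{\Aut\Delta}=\{0\}$, whence the manifestly $\Aut\Delta$-invariant vector $s=\sum_{v\in\Delta_1}v$ vanishes; pairing any $m\in R(P)$ against $s=0$ and using integrality then shows directly that $-m\in R(P)$. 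I have checked each step and the argument is correct. It is self-contained, avoids any transcendental input, and is in fact a clean streamlining of Nill's combinatorial proof that the paper merely alludes to. This is a real improvement in economy of means.

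One inaccuracy in your discussion of strictness: you assert that any counterexample to $(iii)\Rightarrow(ii)$ ``must satisfy $R(P)=-R(P)$ while $\Bc(P)\neq 0$.'' That does not follow from Theorem~\ref{relation of symm notions on metric level}, which only gives one-way implications $(ii)\Rightarrow\Bc(P)=0\Rightarrow(iii)$; failure of $(ii)$ places no constraint on $\Bc(P)$. Indeed, the Nill--Paffenholz examples the paper cites for this very gap have $\Bc(P)=0$ (they even admit K\"ahler--Einstein metrics) yet are not Batyrev--Selivanova symmetric. Your proposed strategy of borrowing an explicit example is sound -- Futaki's $4$-dimensional example or the Fano $3$-fold No.\ $5.2$ both have $R(P)=-R(P)$ and $\Bc(P)\neq0$ and do the job, as does Nill--Paffenholz -- but the necessity claim should be dropped.
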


\bremark
For the blow-up of $\PP^2$ at three non-colinear points, 
$$P=\co\{(\pm1,0),(0,\pm1),(\mp1,\pm1)\}=-P,$$ 
while $R(P)=\emptyset$ (see Figure \ref{Fig for R(P)}). This is fine, since we 
adopt the convention that the empty set satisfies all the symmetry notions
of Definition \ref{SymmDef}. 
\eremark

The final part of
Proposition \ref{relation of three symm notions}, specifically the non-equivalence
of 
Batyrev--Selivanova
symmetry and central lattice symmetry
turns out to be the most subtle part of the theorem,
and was an open problem posed by Batyrev--Selivanova \cite[p. 227]{BS99}
and solved by \cite[Proposition 1.4]{NP11}.

Our point of view is that it is more natural to prove 
Proposition \ref{relation of three symm notions}
by proving a more general theorem (Theorem \ref{relation of symm notions}) that includes several more equivalent notions of symmetry. 
Before doing that though, let us, as a warm-up, prove the easiest implication of Proposition \ref{relation of three symm notions}:

\begin{lemma}
\label{(i)=>(ii)}
    If $P$ is centrally symmetric, then it is Batyrev--Selivanova symmetric.
\end{lemma}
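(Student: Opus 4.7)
The plan is very short because the hypothesis practically delivers the conclusion in one step. The key idea is that central symmetry provides a distinguished non-trivial involution in $\Aut P$, namely $-\mathrm{id}_M$, and a lattice has no non-zero 2-torsion.

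More precisely, I would first observe that the assumption $P = -P$ means that the linear map $-\mathrm{id}_M \colon M \to M$, which clearly lies in $\mathrm{GL}(M) \cong \mathrm{GL}(n,\ZZ)$, preserves $P$; hence $-\mathrm{id}_M \in \Aut P$ by the definition \eqref{AutPEq}. Then, given any $m \in M$ fixed by every element of $\Aut P$, applying this particular element yields $-m = m$, i.e. $2m = 0$. Since $M \cong \ZZ^n$ is torsion-free, this forces $m = 0$, which is exactly the Batyrev--Selivanova symmetry condition of Definition \ref{SymmDef}(b).

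There is no serious obstacle; the only thing one must verify is that $-\mathrm{id}_M$ genuinely belongs to $\Aut P$ as we have defined it, but this is immediate from the formal statement $P = -P$ together with the obvious fact that negation is a lattice automorphism of $M$. In particular, no combinatorial information about the fan $\Delta$, the rays $\Delta_1$, or the root set $R(P)$ is used in this direction, which is consistent with the expectation from Examples \ref{BSsymmnotsymmExam}--\ref{PtwoallsymmExam} that central symmetry is genuinely stronger than Batyrev--Selivanova symmetry.
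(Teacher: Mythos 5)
Your proof is correct and follows exactly the same route as the paper's: observe that $P=-P$ forces $-\mathrm{id}_M\in\Aut P$, then note that any point fixed by $\Aut P$ is in particular fixed by this involution, hence equals its own negative and so vanishes. The only (inessential) extra touch is that you spell out that $M$ is torsion-free to pass from $2m=0$ to $m=0$.
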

\begin{proof}
Central symmetry of $P$ implies that $-\id\in\Aut P$. 
 So if a point $u$ is fixed by $\Aut P$, in particular it is fixed by the central reflection, i.e., $u=-u$. Thus $u=0$. So $P$ is Batyrev--Selivanova symmetric.
\end{proof}





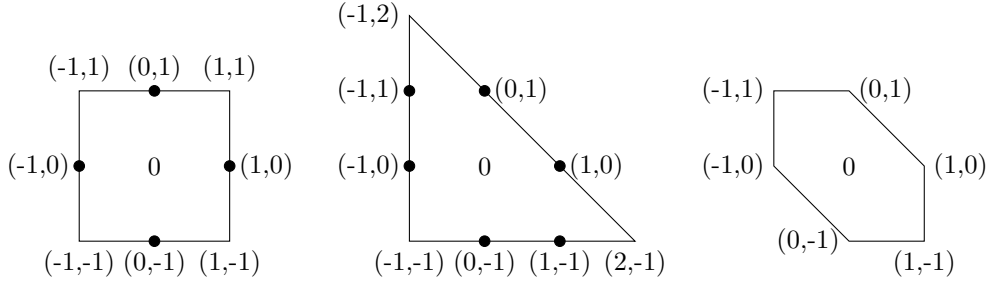
\begin{figure}
    \centering
    \begin{subfigure}{.3\textwidth}
        \centering
        \begin{tikzpicture}
            \draw(1,-1)node[below]{(1,-1)}--(1,1)node[above]{(1,1)}--(-1,1)node[above]{(-1,1)}--(-1,-1)node[below]{(-1,-1)}--cycle;
            \filldraw(1,0)circle(2pt)node[right]{(1,0)};
            \filldraw(0,1)circle(2pt)node[above]{(0,1)};
            \filldraw(-1,0)circle(2pt)node[left]{(-1,0)};
            \filldraw(0,-1)circle(2pt)node[below]{(0,-1)};
            \draw(0,0)node{$0$};
        \end{tikzpicture}
    \end{subfigure}
    \begin{subfigure}{.3\textwidth}
        \centering
        \begin{tikzpicture}
            \draw(2,-1)node[below]{(2,-1)}--(-1,2)node[left]{(-1,2)}--(-1,-1)node[below]{(-1,-1)}--cycle;
            \filldraw(1,0)circle(2pt)node[right]{(1,0)};
            \filldraw(0,1)circle(2pt)node[right]{(0,1)};
            \filldraw(-1,1)circle(2pt)node[left]{(-1,1)};
            \filldraw(-1,0)circle(2pt)node[left]{(-1,0)};
            \filldraw(0,-1)circle(2pt)node[below]{(0,-1)};
            \filldraw(1,-1)circle(2pt)node[below]{(1,-1)};
            \draw(0,0)node{$0$};
        \end{tikzpicture}
    \end{subfigure}
    \begin{subfigure}{.3\textwidth}
        \centering
        \begin{tikzpicture}
            \draw(1,-1)node[below]{(1,-1)}--(1,0)node[right]{(1,0)}--(0,1)node[right]{(0,1)}--(-1,1)node[left]{(-1,1)}--(-1,0)node[left]{(-1,0)}--(0,-1)node[left]{(0,-1)}--cycle;
            \draw(0,0)node{$0$};
        \end{tikzpicture}
    \end{subfigure}
    \caption{\small The polytope $P$ corresponding to the K-polystable toric del Pezzo surfaces, i.e., $\PP^1\times\PP^1$, $\PP^2$ and $\PP^2$ blown-up at three non-linear points. The points of $R(P)$ are marked along the boundary of $P$. Notice that the automorphism group of $P$ is isomorphic to the automorphism group of the polygon with $\ell$ vertices, i.e., $D_{2\ell}\subset S_\ell$, the dihedral group of order $2\ell$; specifically, $\Aut P\cong D_8,D_6=S_3,D_{12}$, respectively, generated by cyclic permutations and 
    reflections. In particular, they are Batyrev--Selivanova symmetric.}
    \label{Fig for R(P)}
\end{figure}
\begin{figure}
    \centering
    \begin{subfigure}{.3\textwidth}
        \centering
        \begin{tikzpicture}
            \draw(2,-1)node[below]{(2,-1)}--(-1,2)node[left]{(-1,2)}--(-1,0)node[left]{(-1,0)}--(0,-1)node[below]{(0,-1)}--cycle;
            \filldraw[red](1,0)circle(2pt)node[right]{(1,0)};
            \filldraw[red](0,1)circle(2pt)node[right]{(0,1)};
            \filldraw(-1,1)circle(2pt)node[left]{(-1,1)};
            \filldraw(1,-1)circle(2pt)node[below]{(1,-1)};
            \draw(0,0)node{$0$};
        \end{tikzpicture}
    \end{subfigure}
    \begin{subfigure}{.3\textwidth}
        \centering
        \begin{tikzpicture}
            \draw(1,-1)node[below]{(1,-1)}--(1,0)node[right]{(1,0)}--(0,1)node[right]{(0,1)}--(-1,1)node[left]{(-1,1)}--(-1,-1)node[left]{(-1,-1)}--cycle;
            \filldraw[red](-1,0)circle(2pt)node[left]{(-1,0)};
            \filldraw[red](0,-1)circle(2pt)node[below]{(0,-1)};
            \draw(0,0)node{$0$};
        \end{tikzpicture}
    \end{subfigure}
    \caption{\small The polytope $P$ corresponding to the K-unstable toric del Pezzo surfaces, i.e., $\PP^2$ blown-up at one or two points. The points of $R(P)$ are marked along the boundary of $P$, and the points in $R_u(P)$ are marked in red. Since automorphisms of the lattice preserve the normalized volumes of facets, the only non-trivial automorphism of $P$ is the reflection over the line $y=x$. Therefore, $\Aut P\cong\ZZ/2\ZZ$, generated by $\begin{pmatrix}0&1\cr1&0\cr\end{pmatrix}$. In particular, they are not Batyrev--Selivanova symmetric since $\Aut P$ fixes the set
    $\{(x,x)\,:\, x\in\ZZ\}\not=\{(0,0)\}$ in $M$. Since $\Bc_k(P)$ and $\Bc(P)$ are fixed by $\Aut P$, they must lie on the line $y=x$.}
    \label{Figure of blowup 1,2 points}
\end{figure}
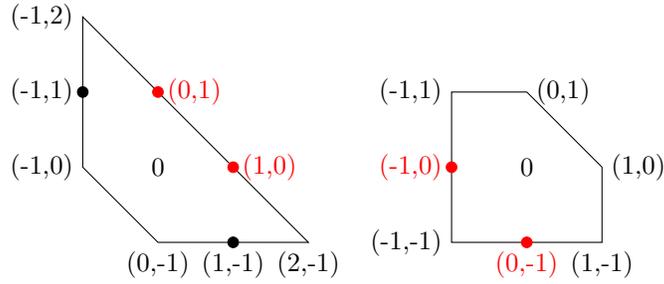

\section{Notions of barycenters}\label{Notions of barycenters}

In this section we discuss two different notions of barycenters for lattice polytopes. Let 
$$M\cong\ZZ^n$$ 
denote a lattice and $P\subset M\otimes_\ZZ\RR\cong\RR^n$
denote a convex lattice polytope, i.e., the vertex set
satisfies $\Ver P\subset M$. 
The {\it $k$-th quantized barycenter} (or $k$-th discrete barycenter) of $P$
is defined as the average of the lattice points in $kP$ divided by $k$:
$$
\Bc_k(P):=\frac1{k|kP\cap M|}\sum_{u\in kP\cap M}u, \q k\in\NN.
$$
The barycenter of $P$ 
is the first moment of the uniform probability measure on $P$,
considered as a convex body in 
$M_\RR:=M\otimes_\ZZ\RR\cong\RR^n$, or
$$
\Bc(P):=\frac1{|P|}\int_Pxdx\in M_\RR.
$$
The empirical measures $\frac1{|kP\cap M|}\sum_{u\in P\cap k^{-1}M}\delta_u$
converge weakly to $1_Pdx/|P|$ so
$$
\Bc(P)=
\lim_{k\ra\infty}\Bc_k(P)\in M_\RR.
$$

We can count the lattice points in $kP$ thanks to Ehrhart theory.
\begin{theorem}{\rm \cite{Ehr67a,Ehr67b}, \cite[Theorem 19.1]{Gru07},\cite[Theorem 12.2, Exercise 12.5]{MS05},\cite[Corollary 5.5]{BR07}}
\label{Ehrhart polynomial}
    Let $M$ be a lattice of dimension $n$. For any lattice polytope $P$, there is a polynomial
    $$
        E_P(k)=\sum_{i=0}^na_ik^i
    $$
    such that
    $$
        E_P(k)=\#(kP\cap M)
    $$
    for any $k\in\NN$. In particular,
    $$\begin{aligned}
        a_n&=\Vol(P),&a_0&=1.
    \end{aligned}$$
\end{theorem}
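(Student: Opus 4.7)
The plan is to prove polynomiality first and then identify the leading and constant coefficients separately. The overall strategy has three stages: triangulate $P$ into lattice simplices to reduce to the simplex case; for a lattice simplex, pass to a cone in one higher dimension and read off the counting function from a rational generating function; finally extract polynomiality and the two named coefficients.

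First I would triangulate $P$ using only its vertices, producing $n$-dimensional lattice simplices $\sigma_1,\ldots,\sigma_r$ with pairwise disjoint interiors. To avoid inclusion-exclusion on boundaries, I would take a \emph{half-open} decomposition: fix a generic direction and, for each $\sigma_j$, delete those facets visible from that direction, yielding half-open simplices $\widetilde\sigma_1,\ldots,\widetilde\sigma_r$ that partition $P$ exactly. Then
$$
\#(kP\cap M)=\sum_{j=1}^r\#(k\widetilde\sigma_j\cap M),
$$
so polynomiality of $E_P$ is reduced to polynomiality of $E_{\widetilde\sigma_j}$ for each (half-open) lattice simplex.

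Next, for an $n$-simplex $\sigma$ with lattice vertices $v_0,\ldots,v_n$, I would form the cone
$$
C=\Bigl\{\textstyle\sum_{i=0}^n t_i(1,v_i):t_i\geqs 0\Bigr\}\subset\RR\times M_\RR,
$$
whose lattice points at height $k$ (first coordinate equal to $k$) are in bijection with $k\sigma\cap M$. Every lattice point of $C$ decomposes uniquely as a sum of a lattice point of the half-open fundamental parallelepiped $\Pi=\{\sum s_i(1,v_i):0\leqs s_i<1\}$ and a non-negative integer combination of the generators $(1,v_i)$. Letting $h_j$ count lattice points of $\Pi$ at height $j$, this decomposition yields the Ehrhart series
$$
\sum_{k\geqs 0}\#(k\sigma\cap M)\,t^k=\frac{h_0+h_1t+\cdots+h_nt^n}{(1-t)^{n+1}}.
$$
Expanding $(1-t)^{-(n+1)}=\sum_{k}\binom{k+n}{n}t^k$ and convolving with the numerator shows that the coefficient of $t^k$ is a polynomial in $k$ of degree at most $n$; summing over the pieces of the half-open decomposition yields $E_P(k)$, a polynomial of degree at most $n$.

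Finally, for the two explicit coefficients: the leading coefficient follows from the Riemann-sum identity
$$
\Vol(P)=\lim_{k\to\infty}\frac{\#(kP\cap M)}{k^n}=a_n.
$$
For $a_0=1$, observe that for a closed lattice simplex the constant term of the numerator above is $h_0=1$ (the origin is the unique height-$0$ lattice point of $\Pi$), which forces $E_\sigma(0)=1$; combining this with the bookkeeping in the half-open decomposition (or, equivalently, applying Ehrhart--Macdonald reciprocity to the rational generating function and using that the Euler characteristic of a convex polytope equals $1$) delivers $E_P(0)=1$. The main obstacle is precisely this last step: polynomiality and the leading coefficient fall out cleanly from the cone construction and a volume estimate, while the identification $a_0=1$ is genuinely delicate and requires either reciprocity or a careful topological accounting of how boundary faces are distributed among the half-open pieces of the triangulation.
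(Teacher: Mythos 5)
The paper does not prove Theorem \ref{Ehrhart polynomial}; it states it with citations (Ehrhart, Gruber, Miller--Sturmfels, Beck--Robins), so there is no in-text proof to compare against. Your argument is, in essence, the proof found in one of those references (Beck--Robins): triangulate into lattice simplices, pass to a half-open decomposition to avoid inclusion--exclusion, cone over each simplex at height one, tile the cone by translates of the half-open fundamental parallelepiped to obtain the rational Ehrhart series with denominator $(1-t)^{n+1}$, and expand. That correctly yields polynomiality of degree at most $n$, and the Riemann-sum argument for $a_n=\Vol(P)$ is standard and sound.

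Two small points worth tightening. First, you should check that in your half-open decomposition exactly one simplex retains all $n+1$ facets while every other loses at least one; this is what makes $E_{\widetilde\sigma_1}(0)=1$ and $E_{\widetilde\sigma_j}(0)=0$ for $j\ge2$, hence $a_0=\sum_j E_{\widetilde\sigma_j}(0)=1$ without any appeal to reciprocity. (For a simplex, no external generic point can see all $n+1$ facets, so the numerators of the half-open pieces still have degree $\le n$, which also keeps the degree bound clean.) Second, be a bit careful that the polynomial you extract from the series coefficient agrees with the counting function for all $k\ge1$ and that you are then \emph{evaluating} that polynomial at $k=0$ rather than directly counting lattice points in $0\cdot P$; the half-open bookkeeping you allude to is exactly what bridges that gap. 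With those clarifications the proof is complete and correct.
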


\begin{theorem}{\rm(\cite[Theorem 2.3]{JR2})}\label{JR2thm}
    If $\Bc_k(P)=0$ for $n+1$ values of $k$, then $\Bc_k(P)=0$ for all $k$ and $\Bc(P)=0$.
\end{theorem}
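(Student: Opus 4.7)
The plan is to reduce the claim to polynomial interpolation on a carefully chosen auxiliary polynomial, exploiting a free root at $k=0$. For each coordinate $j\in\{1,\ldots,n\}$, consider
\[
S_j(k) := \sum_{u\in kP\cap M}\langle e_j,u\rangle \;=\; k\,E_P(k)\cdot\langle e_j,\Bc_k(P)\rangle,
\]
so that vanishing of $\Bc_k(P)$ is equivalent to $S_1(k)=\cdots=S_n(k)=0$. The first step, and the main technical ingredient, is to show that $S_j$ is a polynomial in $k$ of degree at most $n+1$. This ``weighted Ehrhart'' statement I would derive from Theorem~\ref{Ehrhart polynomial} by a one-step dimension lift: after translating $P$ by a lattice vector so that $\langle e_j,u\rangle\geqs 0$ on $P$, introduce the $(n+1)$-dimensional lattice polytope
\[
\widetilde P_j := \bigl\{(u,s)\in M_\RR\times\RR \,:\, u\in P,\ 0\leqs s\leqs \langle e_j,u\rangle\bigr\}.
\]
Counting fiber-by-fiber gives $E_{\widetilde P_j}(k)=S_j(k)+E_P(k)$, and Theorem~\ref{Ehrhart polynomial} applied to both $\widetilde P_j$ and $P$ delivers the degree bound. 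Undoing the translation contributes only a polynomial correction of degree $n+1$, so the bound persists for the original $P$.

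The key observation is that $S_j$ has a \emph{free} root at $k=0$: since $0\cdot P\cap M=\{0\}$, trivially $S_j(0)=0$. Combined with the hypothesis that $\Bc_{k_i}(P)=0$ at $n+1$ distinct positive integers $k_1,\ldots,k_{n+1}$ (equivalently $S_j(k_i)=0$), the polynomial $S_j$ has at least $n+2$ distinct roots. Since $\deg S_j\leqs n+1$, this forces $S_j\equiv 0$.

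As $E_P(k)\geqs 1$ and $k\geqs 1$ for every positive integer $k$, dividing yields $\langle e_j,\Bc_k(P)\rangle = S_j(k)/(k\,E_P(k))=0$ for all $k\in\NN$ and every $j$, hence $\Bc_k(P)=0$ for all $k$. The limit statement $\Bc(P)=0$ then follows from the weak convergence $\Bc_k(P)\to \Bc(P)$ recalled in Section~\ref{Notions of barycenters}. The main obstacle is rigorously establishing the polynomiality of $S_j$ with the correct degree bound via the tent construction above; once that is in hand, the free root at $k=0$ is precisely what lowers the threshold from $n+2$ to $n+1$.
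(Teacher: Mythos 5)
Your proof is correct and takes essentially the same approach as the paper: the auxiliary tent polytope $\widetilde P_j$ matches the paper's $P_i$ (the paper avoids translating $P$ by instead adding a constant $C_i$ to the height function, but that is cosmetic), the degree bound comes from the same appeal to Ehrhart polynomiality in $\RR^{n+1}$, and your observation of the free root at $k=0$ is exactly the paper's observation that the numerator $Q_i(k)$ has vanishing constant term (from $E_{P_i}(0)=E_P(0)=1$), so the two formulations of the interpolation step --- $n+2$ roots of a degree-$(n+1)$ polynomial vs.\ $n+1$ roots of the degree-$n$ polynomial $\widetilde Q_i(k)=Q_i(k)/k$ --- are identical in content.
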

\begin{proof}
    \begin{figure}
        \centering
        \begin{tikzpicture}
            \filldraw[fill=gray!50](0,0)--(4,0)--(5,{sqrt(3)})--(5,{4+sqrt(3)})--(4,4)--cycle;
            \filldraw[fill=gray!25](0,0)--(4,4)--(5,{4+sqrt(3)})--cycle;
            \draw(4,0)--(4,4);
            \draw(4.75,{3*sqrt(3)/4}); 
            \draw(5,{2/3+sqrt(3)});
            \filldraw(0,0)circle(2pt)node[left]{$u_i=-C_i$};
            \filldraw({4/3},0)circle(2pt);
            \filldraw({8/3},0)circle(2pt);
            \filldraw(4,0)circle(2pt)node[right]{$u_i=3-C_i$};
            \filldraw(4.5,{sqrt(3)/2})circle(2pt);
            \filldraw(5,{sqrt(3)})circle(2pt);
            \filldraw({4/3},{4/3})circle(2pt);
            \filldraw({8/3},{4/3})circle(2pt);
            \filldraw(4,{4/3})circle(2pt);
            \filldraw(4.5,{4/3+sqrt(3)/2})circle(2pt);
            \filldraw(5,{4/3+sqrt(3)})circle(2pt);
            \filldraw({8/3},{8/3})circle(2pt);
            \filldraw(4,{8/3})circle(2pt);
            \filldraw(4.5,{8/3+sqrt(3)/2})circle(2pt);
            \filldraw(5,{8/3+sqrt(3)})circle(2pt)node[right]{$\;\;\;\;P_i$};
            \filldraw(4,4)circle(2pt);
            \filldraw(4.5,{4+sqrt(3)/2})circle(2pt);
            \filldraw(5,{4+sqrt(3)})circle(2pt)node[right]{$h=3$};
            \filldraw({8/3+.5},{8/3+sqrt(3)/2})circle(2pt);
            \draw[->](3,-.5)--(3,-1.5)node[midway,right]{$\pi$};
            \filldraw[fill=gray!25](0,-3)--(4,-3)--(5,{sqrt(3)-3})--cycle;
            \filldraw(0,-3)circle(2pt)node[left]{$u_i=-C_i$};
            \filldraw({4/3},-3)circle(2pt);
            \filldraw({8/3},-3)circle(2pt);
            \filldraw(4,-3)circle(2pt)node[right]{$u_i=3-C_i$};
            \filldraw(4.5,{sqrt(3)/2-3})circle(2pt);
            \filldraw(5,{sqrt(3)-3})circle(2pt);
            \filldraw({8/3+.5},{-3+sqrt(3)/2})circle(2pt);
            \draw[dotted]({4/3},0)--({4/3},{4/3});
            \draw[dotted]({8/3},0)--({8/3},{8/3});
            \draw[dotted](4.5,{sqrt(3)/2})--(4.5,{sqrt(3)/2+4});
        \end{tikzpicture}
        \caption{The lattice polytope $P_i$ has $u_i+C_i+1$ lattice points above each lattice
        point in $P$ with $i$-th coordinate equal to $u_i$. The dotted lines are fibers of $\pi:M\times\RR\ni (u,h)\mapsto u\in M$.}\label{lattice polytope}
    \end{figure}
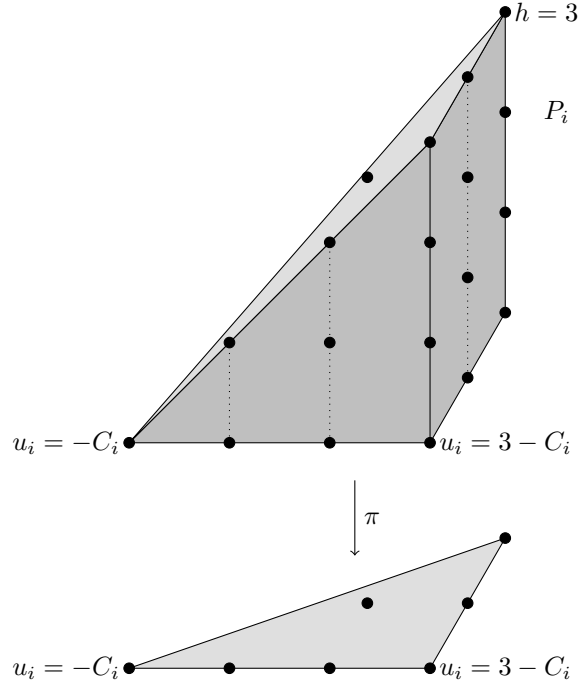
    Fix $1\leq i\leq n$. Pick an integer $C_i\geq-\min\{u_i:(u_1,\ldots,u_n)\in P\}$. Then for any $u\in P$, the function $h_i(u):=u_i+C_i$ is non-negative. Consider the lattice polytope
    (see Figure \ref{lattice polytope})
    \begin{equation}\label{no translation}
        P_i:=\{(u,h)\in\RR^n\times\RR~:~u\in P,~0\leq h\leq h_i(u)\}.
    \end{equation}
    Notice that
    \begin{align*}
        E_{P_i}(k)&=\sum_{u\in P\cap\frac{1}{k}M}\#\left\{\left(u,0\right),\left(u,\frac{1}{k}\right),\cdots,\left(u,h_i(u)\right)\right\}\\
        &=\sum_{u\in P\cap\frac{1}{k}M}(k(u_i+C_i)+1)\\
        &=(kC_i+1)E_P(k)+k\sum_{u\in P\cap\frac{1}{k}M}u_i.
    \end{align*}
    Therefore,
    $$
    \Bc_{k,i}\left(P\right)
        =\frac{1}{E_P(k)}\sum_{u\in P\cap\frac{1}{k}M}u_i
        =\frac{E_{P_i}(k)-(C_ik+1)E_P(k)}{kE_P(k)}.\label{Bc_k no translation}
    $$
    Notice that the numerator
    $$
        Q_i(k):=E_{P_i}(k)-(C_ik+1)E_P(k)
    $$
    is a polynomial with respect to $k$ of degree at most $n+1$ and with constant term $0$ by Theorem \ref{Ehrhart polynomial}. Hence $Q_i(k)=k\widetilde{Q}_i(k)$ for some polynomial $\widetilde{Q}_i$ of degree at most $n$.
    Now,
    $$
        \Bc_{k,i}\left(P\right)=\frac{\widetilde{Q}_i\left(k\right)}{E_P\left(k\right)}.
    $$
    Since $\Bc_{k,i}$ vanishes for at least $n+1$ values of $k$, $\widetilde{Q}_i\equiv0$, i.e., $\Bc_{k,i}(P)=0$ for all $k$.
\end{proof}

\section{Notions of canonical metrics}\label{Notions of canonical metrics}

In this section we discuss various notions of canonical metrics on toric manifolds.

\begin{definition}\label{KE def}
    A Fano manifold admits a K\"ahler--Einstein metric if there is a K\"ahler form $\omega$ such that $\Ric\,\omega=\omega$.
\end{definition}

\begin{remark}\label{existence of KE}
    Recall that for toric Fano manifolds, the existence of K\"ahler--Einstein metrics is equivalent to $\Bc(P)=0$ \cite[Corollary 1.2]{WZ04}.
    By Matsushima \cite{Mat57}, the existence of K\"ahler--Einstein metrics implies that $\Aut_0X$ is reductive. 
\end{remark}

\subsection{Balanced metrics}

The following notion of balanced metrics goes back to Zhang \cite{Zh96} and Luo \cite{Luo98}, and was popularized by Donaldson \cite[p.\ 481]{Don01}.
\begin{definition}
    Suppose $L$ is an ample line bundle. Let
    $$
        \cH_k=\left\{\text{positively curved Hermitian metrics on $-kL$}\right\},
    $$
    and
    $$
        \cB_k=\left\{\text{Hermitian inner products on }H^0\left(X,-kL\right)\right\}.
    $$
    Fix
    \begin{equation}\label{reference}
        h\in\cH_1,\qquad\omega:=-\sqrt{-1}\partial\bar\partial\log h.
    \end{equation}
    For $\varphi$ such that $\omega_\varphi:=\omega+i\partial\bar\partial\varphi>0$, let $h_\varphi:=he^{-\varphi}$.
    Then
    $$
        \cH_k=\left\{h_\varphi^k\,:\,\omega_\varphi>0\right\}.
    $$
    Let $\Hilb_k:\cH_k\to\cB_k$,
    $$
        \Hilb_k\left(h_\varphi^k\right)\left(s,t\right):=\frac{d_k}{\int_X\omega_\varphi^n}\int_Xh_\varphi^k\left(s,t\right)\omega_\varphi^n,
    $$
    where $d_k=h^0(X,-kL)$. Suppose $kL$ is very ample. For $H\in\cB_k$, $\FS_k(H)\in\cH_k$ is given by
    \begin{equation}\label{FSk}
        \FS_k\left(H\right):=\left(\sum_i\left|s_i\right|^2\right)^{-1},
    \end{equation}
    i.e., for $s,t\in H^0(X,-kL)$,
    $$
        \left\langle s,t\right\rangle_{\FS_k\left(H\right)}=\frac{s\bar{t}}{\sum_i\left|s_i\right|^2}.
    $$
    where $\{s_i\}$ is an(y) $H$-orthonormal basis.
    
    A Hermitian metric $h_\varphi^k\in\cH_k$ is \textit{balanced} if
    $$
        h_\varphi^k=\FS_k\circ\Hilb_k\left(h_\varphi^k\right).
    $$
\end{definition}

According to a landmark result of Donaldson, balanced metrics approximate constant scalar curvature metrics \cite[Theorems 2 and 3]{Don01}.


Balanced metrics are related to asymptotics of Bergman kernel and the Mabuchi functional.

\subsection{{\RTZ} metrics}

Another notion of balanced metics related to the Ding functional was introduced by Rubinstein--Tian--Zhang \cite[Lemma 4.2]{RTZ21}.

\begin{definition}
    Recall \eqref{reference}. For any $f\in C^\infty(X)$, the map $\Hilb_k^{f,h}:\cH_k\to\cB_k$ is given by
    $$
        \Hilb_k^{f,h}\left(h_\varphi^k\right)\left(s,t\right):=\frac{d_k}{\int_Xe^{f-\varphi}\omega^n}\int_Xh_\varphi^k\left(s,t\right)e^{f-\varphi}\omega^n.
    $$
    
    A Hermitian metric $h_\varphi^k\in\cH_k$ is \textit{\RTZ} if
    $$
        h_\varphi^k=\FS_k\circ\Hilb_k^{f,h}\left(h_\varphi^k\right).
    $$
\end{definition}

The key observation of Rubinstein--Tian--Zhang was that balanced metrics are related to the Ding functional.
\begin{definition}
    Fix $K\in\cB_k$. For $h_\varphi^k=\FS_k(H)$, the quantized Ding functional is given by
    $$
        F_k^{f,h,K}\left(\varphi\right):=-\log\frac{1}{\int_X\omega^n}\int_Xe^{f-\varphi}\omega^n+\frac{1}{kd_k}\log\det K^{-1}H.
    $$
\end{definition}
\begin{lemma}{\rm\cite[Lemma 4.2]{RTZ21}}
    The metric $h_\varphi^k=\FS_k(H)$ is {\RTZ} if and only if $\varphi$ is a critical point of $F_k^{f,h,K}$.
\end{lemma}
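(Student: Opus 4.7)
The plan is to view $F_k^{f,h,K}$, which is a priori defined on the infinite-dimensional space of \K potentials, as a functional on the finite-dimensional space $\cB_k$ via the parameterization $H \mapsto \varphi_H$ determined by $h_{\varphi_H}^k = \FS_k(H)$. The lemma then amounts to computing the Euler--Lagrange equation of this pulled-back functional and recognizing it as the fixed-point condition $\FS_k\circ \Hilb_k^{f,h}(h_\varphi^k) = h_\varphi^k$.

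Concretely, I would choose a path $H_t \subset \cB_k$ with $H_0 = H$ and $\dot H|_{t=0} = A$, write $h_{\varphi_t}^k = \FS_k(H_t)$, and differentiate. The variation of the second term is the standard Jacobi formula
\[
\frac{d}{dt}\bigg|_{t=0}\frac{1}{kd_k}\log\det K^{-1}H_t \;=\; \frac{1}{kd_k}\tr\bigl(H^{-1}A\bigr).
\]
For the first term, the key identity is that $\FS_k(H)^{-1}$ depends linearly on $H^{-1}$: in any basis $\{s_\alpha\}_{\alpha=1}^{d_k}$ of $H^0(X,-kL)$ with $s_\alpha = f_\alpha e$ for a local frame $e$, one has $\FS_k(H_t)(e,e)^{-1} = \sum_{\alpha,\beta}(H_t^{-1})_{\alpha\beta} f_\alpha \bar f_\beta$. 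Taking the $\log$ and using $\dot{(H^{-1})}|_0 = -H^{-1}AH^{-1}$ yields
\[
\dot\varphi_0 \;=\; -\frac{1}{k}\,\frac{\sum_{\alpha,\beta}(H^{-1}AH^{-1})_{\alpha\beta}\,\langle s_\alpha,s_\beta\rangle_{\FS_k(H)}}{\text{(total Bergman mass)}},
\]
which, after choosing $\{s_\alpha\}$ to be $H$-orthonormal so that $H = I$ and the denominator simplifies, becomes simply $\dot\varphi_0 = -\frac{1}{k}\sum_{\alpha,\beta}A_{\alpha\beta}\langle s_\alpha,s_\beta\rangle_{\FS_k(H)}$.

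Substituting into the variation of the first term of $F_k^{f,h,K}$ and equating the total variation to zero for every Hermitian $A$ gives the matrix identity
\[
\delta_{\alpha\beta} \;=\; \frac{d_k}{\int_X e^{f-\varphi}\omega^n}\int_X \langle s_\alpha,s_\beta\rangle_{\FS_k(H)}\,e^{f-\varphi}\omega^n \;=\; \Hilb_k^{f,h}\bigl(\FS_k(H)\bigr)(s_\alpha,s_\beta),
\]
i.e.\ $H = \Hilb_k^{f,h}(\FS_k(H))$ in the chosen $H$-orthonormal basis. Applying $\FS_k$ to both sides converts this into $\FS_k(H) = \FS_k\circ\Hilb_k^{f,h}(\FS_k(H))$, which is precisely the \RTZ\ condition for $h_\varphi^k = \FS_k(H)$. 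The converse direction is immediate: if the \RTZ\ condition holds then the matrix identity above is satisfied, which makes the first-order variation vanish for every $A$.

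The only genuinely delicate step is the computation of $\dot\varphi_0$ in terms of $A$ and verifying its invariance under change of basis, so that the variational equation at an arbitrary $H$ can be rewritten intrinsically as an identity of Hermitian forms rather than just of matrix entries; everything else is a direct, formal calculation once the pullback viewpoint is adopted.
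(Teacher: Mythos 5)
Your proposal is correct and follows essentially the same approach as the paper: pull $F_k^{f,h,K}$ back to $\cB_k$ via $H\mapsto\FS_k(H)$, differentiate along a path $H_t$, and match the resulting Euler--Lagrange condition to the fixed-point equation defining the balanced metric. The only cosmetic difference is that you carry the general matrix formulas $\tr(H^{-1}A)$ and $-H^{-1}AH^{-1}$ before specializing to an $H$-orthonormal basis, whereas the paper jump-starts the computation by also simultaneously diagonalizing the variation $M=\dot H_0$ to reduce everything to eigenvalue sums; the two are computationally equivalent. One small point you share with the paper and might want to make explicit: concluding from $\FS_k(H)=\FS_k(\Hilb_k^{f,h}(\FS_k(H)))$ that $H=\Hilb_k^{f,h}(\FS_k(H))$ in the converse direction uses injectivity of $\FS_k$ (guaranteed by very ampleness of $kL$); the paper's phrasing avoids invoking this by going through the orthonormality condition $\Hilb_k^{f,h}(h_\varphi^k)(s_i,s_j)=\delta_{ij}$ directly, but the content is the same.
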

\begin{proof}
    For $h_\varphi^k=\FS_k(H)$, consider a curve $t\mapsto H_t$ with $H_0=H$. For a basis $\{s_i\}$ of $H^0(X,kL)$, let $M=(M_{ij})$ be given by
    $$
        M_{ij}=\left.\frac{d}{dt}\right|_{t=0}H_t\left(s_i,s_j\right).
    $$
    By simultaneous diagonalization, we may choose an $H$-orthonormal basis $\{s_i\}$ such that $M$ is diagonal, say $M=\diag(\lambda_1,\ldots,\lambda_{d_k})$. Then
    $$
        \left.\frac{d}{dt}\right|_{t=0}\log\det K^{-1}H_t=\sum_i\lambda_i=\tr M.
    $$
    Let $h_{\varphi_t}^k=\FS_k(H_t)$. By \eqref{FSk},
    \begin{align*}
        \left.\frac{d}{dt}\right|_{t=0}\varphi_t&=\left.\frac{d}{dt}\right|_{t=0}\left(-\frac{1}{k}\log\FS_k\left(H_t\right)\right)\\
        &=-\frac{\sum_i\lambda_i\left|s_i\right|^2}{k\sum_i\left|s_i\right|^2}\\
        &=-\frac{1}{k}\sum_i\lambda_ih_\varphi^k\left(s_i,s_i\right)\\
        &=-\frac{1}{k}\sum_{i,j}M_{ij}h_\varphi^k\left(s_i,s_j\right).
    \end{align*}
    Therefore,
    \begin{align*}
        \left.\frac{d}{dt}\right|_{t=0}F_k^{f,h,K}\left(\varphi_t\right)&=-\frac{1}{\int_Xe^{f-\varphi}\omega^n}\int_X\frac{1}{k}\sum_{i,j}M_{ij}h_\varphi^k\left(s_i,s_j\right)e^{f-\varphi}\omega^n+\frac{1}{kd_k}\tr M\\
        &=\sum_i\frac{M_{ij}}{kd_k}\left(\delta_{ij}-\frac{d_k}{\int_Xe^{f-\varphi}\omega^n}\int_Xh_\varphi^k\left(s_i,s_j\right)e^{f-\varphi}\omega^n\right)\\
        &=\sum_i\frac{M_{ij}}{kd_k}\left(\delta_{ij}-\Hilb_k^{f,h}\left(h_\varphi^k\right)\left(s_i,s_j\right)\right).
    \end{align*}
    Now, $\varphi$ is a critical point of $F_k^{f,h,K}$ if and only if for any $M$, $\frac{d}{dt}|_{t=0}F_k^{f,h,K}(\varphi_t)=0$, i.e.,
    $$
        \Hilb_k^{f,h}\left(h_\varphi^k\right)\left(s_i,s_j\right)=\delta_{ij}.
    $$
    In other words, $\{s_i\}$ is a $\Hilb_k^{f,h}(h_\varphi^k)$-orthonormal basis. Equivalently, since $\{s_i\}$ is also $H$-orthonormal,
    $$
        H=\Hilb_k^{f,h}\left(h_\varphi^k\right),
    $$
    and
    \[
        h_\varphi^k=\FS_k\left(H\right)=\FS_k\left(\Hilb_k^{f,h}\left(h_\varphi^k\right)\right).\qedhere
    \]
\end{proof}

\subsection{Anticanonically balanced metrics}

The notion of {\RTZ} metrics depends on the choice of $(f,h)$
and is defined for any polarization $L$. In the special case when $L=-K_X>0$, i.e., $X$ is anticanonically polarized Fano,  there is a canonical choice.
The next definition is due to 
Donaldson \cite[p.\ 588]{Don09}.
\begin{definition}\label{abm def}
    Suppose $X$ is Fano and let $L=-K_X$. Notice that for any $k$ and $h^k\in\cH_k$, $\mu_h:=h$ is a volume form on $X$. The map $\HilbA:\cH_k\to\cB_k$ is given by
    $$
        \HilbA\left(h_\varphi^k\right)\left(s,t\right):=\frac{d_k}{\int_X\mu_{h_\varphi}}\int_Xh_\varphi^k\left(s,t\right)\mu_{h_\varphi},
    $$
    
    A Hermitian metric $h_\varphi^k\in\cH_k$ is \textit{anticanonically balanced} if
    $$
        h_\varphi^k=\FS_k\circ\HilbA\left(h_\varphi^k\right).
    $$
\end{definition}

\begin{lemma}
    Recall \eqref{reference}. Let $f_h$ be such that
    \begin{equation}\label{Ricci}
        e^{f_h}\omega^n=\mu_h.
    \end{equation}
    In other words, $f_h$ is the Ricci potential of $\omega$, defined (up to a constant) by
    \begin{equation}\label{Ricci potential}
        \sqrt{-1}\partial\bar\partial f_h=\Ric\omega-\omega.
    \end{equation}
    Then
    $$
        \HilbA=\Hilb_k^{f_h,h}.
    $$
    In particular, anticanonically balanced metrics are {\RTZ} metrics.
\end{lemma}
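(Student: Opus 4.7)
The plan is to unwind both sides of the claimed identity $\HilbA = \Hilb_k^{f_h, h}$ down to the common expression $h_\varphi e^{-\varphi} \omega^n$ and observe they coincide tautologically. The heart of the matter is to identify the two measures $\mu_{h_\varphi}$ and $e^{f_h - \varphi} \omega^n$.

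First, I would recall that by \eqref{reference} we have $h_\varphi := h e^{-\varphi}$, so by the definition of $\mu_{h_\varphi}$ in Definition \ref{abm def}, one has the identity $\mu_{h_\varphi} = h_\varphi = h e^{-\varphi}$ as volume forms on $X$. Second, I would invoke the defining relation \eqref{Ricci} with $\varphi = 0$ (or simply read off the definition of $f_h$), which gives $h = \mu_h = e^{f_h}\omega^n$. Combining these two identities gives
\[
\mu_{h_\varphi} \;=\; h\, e^{-\varphi} \;=\; e^{f_h - \varphi}\,\omega^n.
\]

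Substituting this equality of volume forms into the definition of $\HilbA(h_\varphi^k)(s,t)$ yields exactly
\[
\HilbA(h_\varphi^k)(s,t) \;=\; \frac{d_k}{\int_X e^{f_h - \varphi}\,\omega^n} \int_X h_\varphi^k(s,t)\, e^{f_h - \varphi}\,\omega^n \;=\; \Hilb_k^{f_h, h}(h_\varphi^k)(s,t),
\]
which is the required equality of maps $\cH_k \to \cB_k$. From this the final assertion is immediate: if $h_\varphi^k$ is anticanonically balanced, i.e.\ $h_\varphi^k = \FS_k \circ \HilbA(h_\varphi^k)$, then substituting the identity $\HilbA = \Hilb_k^{f_h, h}$ shows $h_\varphi^k = \FS_k \circ \Hilb_k^{f_h, h}(h_\varphi^k)$, so $h_\varphi^k$ is \RTZ.

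There is essentially no obstacle here: the lemma is a bookkeeping identity, and the only thing to check carefully is that $f_h$ is indeed determined (up to an additive constant) so that the equality of measures $h = e^{f_h} \omega^n$ holds on the nose, matching the normalization used to define $\mu_h$. Since both $\HilbA$ and $\Hilb_k^{f_h,h}$ are scale-invariant in their respective volume forms (the normalizing integral cancels any ambiguous multiplicative constant), any additive constant ambiguity in $f_h$ is irrelevant, so the identification is canonical.
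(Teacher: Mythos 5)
Your proof is correct and follows essentially the same route as the paper: both rest on the chain of identities $\mu_{h_\varphi} = h_\varphi = h e^{-\varphi} = e^{f_h}\omega^n \cdot e^{-\varphi} = e^{f_h - \varphi}\omega^n$, after which the two $\Hilb$-maps are literally the same integral. Your closing observation that the normalizing integral washes out the additive-constant ambiguity in $f_h$ is a pleasant extra sanity check the paper leaves implicit.
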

\begin{proof}
    First, note that applying $\sqrt{-1}\partial\bar\partial\log$ to \eqref{Ricci} gives
    $$
        \sqrt{-1}\partial\bar\partial f_h-\Ric\omega=\sqrt{-1}\partial\bar\partial\log h=-\omega.
    $$
    So \eqref{Ricci potential} holds.
    
    Next, by \eqref{Ricci},
    \begin{align*}
        \Hilb_k^{f_h,h}\left(h_\varphi\right)\left(s,t\right)&=\frac{1}{\int_Xe^{f_h-\varphi}\omega^n}\int_Xh_\varphi^k\left(s,t\right)e^{f_h-\varphi}\omega^n\\
        &=\frac{1}{\int_Xe^{-\varphi}\mu_h}\int_Xh_\varphi^k\left(s,t\right)e^{-\varphi}\mu_h\\
        &=\frac{1}{\int_X\mu_{h_\varphi}}\int_Xh_\varphi^k\left(s,t\right)\mu_{h_\varphi}\\
        &=\HilbA\left(h_\varphi\right)\left(s,t\right).\qedhere
    \end{align*}
\end{proof}

\section{Fundamental theorem on symmetry of toric Fanos}\label{Section Fundamental}

\subsection{Relating symmetry to barycenters}

The notions introduced in \S\ref{Symmetry notions for toric Fanos} and \S\ref{Notions of barycenters} are related as follows.
Some of the implications are well-known (but scattered across the literature).

\begin{theorem}\label{relation of symm notions}
The relationship of the following notions are that 
\textup{(\ref{item central sym})}
$\Rightarrow$
\textup{(\ref{item BS sym})}
$\Rightarrow$
\textup{(\ref{item Bc_k=0 for all k})}
$\Leftrightarrow$
\textup{(\ref{item Bc_k=0 for large k})}
$\Leftrightarrow$
\textup{(\ref{item Bc_k=0 for n+2 values of k})}
$\Rightarrow$
\textup{(\ref{item Bc=0})}
$\Rightarrow$
\textup{(\ref{item R(P)=-R(P)})}.
\begin{enumerate}[$(\rm i)$]
\item \label{item central sym}
$P$ is centrally symmetric; 
\item \label{item BS sym}
$P$ is Batyrev--Selivanova symmetric; 
\item \label{item Bc_k=0 for all k}
$\Bc_k(P)=0$ for all $k\in\NN$;
\item  \label{item Bc_k=0 for large k}
$\Bc_k(P)=0$ for all sufficiently large $k\in\NN$;
\item  \label{item Bc_k=0 for n+2 values of k}
$\Bc_k(P)=0$ for at least $n+1$ values of $k\in\NN$, 
\item  \label{item Bc=0}
$\Bc(P)=0$;
\item \label{item R(P)=-R(P)}
$P$ is centrally lattice symmetric.
\end{enumerate}
\end{theorem}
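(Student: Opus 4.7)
The plan is to walk the chain of implications in order: elementary invariance handles the first two steps, Theorem~\ref{JR2thm} collapses the middle block into a single equivalence class and pushes forward to (\ref{item Bc=0}), and the final step (\ref{item Bc=0})$\Rightarrow$(\ref{item R(P)=-R(P)}) is left for last since it is the only one requiring nontrivial input.

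For the elementary portion: (\ref{item central sym})$\Rightarrow$(\ref{item BS sym}) is already Lemma~\ref{(i)=>(ii)}. For (\ref{item BS sym})$\Rightarrow$(\ref{item Bc_k=0 for all k}), I would observe that $\Aut P$ acts $\ZZ$-linearly on $M_\RR$ preserving $P$, hence permutes $kP\cap M$ for every $k$, so $\Bc_k(P)\in M_\QQ$ is fixed by $\Aut P$. Since $\Aut P$ is finite and acts by integer matrices, its fixed subspace in $M_\RR$ equals $M^{\Aut P}\otimes_\ZZ\RR$; Batyrev--Selivanova symmetry gives $M^{\Aut P}=\{0\}$, so $\Bc_k(P)=0$ for all $k$. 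The implications (\ref{item Bc_k=0 for all k})$\Rightarrow$(\ref{item Bc_k=0 for large k})$\Rightarrow$(\ref{item Bc_k=0 for n+2 values of k}) are tautological, while (\ref{item Bc_k=0 for n+2 values of k})$\Rightarrow$(\ref{item Bc_k=0 for all k}) and (\ref{item Bc_k=0 for n+2 values of k})$\Rightarrow$(\ref{item Bc=0}) are quoted directly from Theorem~\ref{JR2thm}; this closes the equivalence of (\ref{item Bc_k=0 for all k}), (\ref{item Bc_k=0 for large k}), (\ref{item Bc_k=0 for n+2 values of k}) and delivers (\ref{item Bc=0}).

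For the last implication, (\ref{item Bc=0})$\Rightarrow$(\ref{item R(P)=-R(P)}), the strategy chains three classical results. First, by the Wang--Zhu toric K\"ahler--Einstein criterion (Remark~\ref{existence of KE}), $\Bc(P)=0$ yields a K\"ahler--Einstein metric on $X$. Next, Matsushima's theorem makes $\Aut_0 X$ reductive. Finally, Demazure's structure theorem (stated in the introduction) identifies the unipotent radical of $\Aut_0 X$ with the one-parameter subgroups indexed by $R_u(P)=R(P)\setminus R_s(P)$, where $R_s(P)=R(P)\cap(-R(P))$ is symmetric by construction; reductivity forces $R_u(P)=\emptyset$, so $R(P)=R_s(P)=-R_s(P)\subseteq -R(P)$, and the reverse inclusion is automatic. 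The main obstacle is precisely this final step: unlike the earlier ones it cannot be handled by combinatorial or convex-geometric manipulation and genuinely imports three deep results from K\"ahler geometry, Lie theory, and the structure theory of toric automorphism groups.
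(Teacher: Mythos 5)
Your proposal is correct and follows essentially the same route as the paper: Lemma \ref{(i)=>(ii)} for (\ref{item central sym})$\Rightarrow$(\ref{item BS sym}), $\ZZ$-linearity of the $\Aut P$-action for (\ref{item BS sym})$\Rightarrow$(\ref{item Bc_k=0 for all k}), Theorem \ref{JR2thm} for the middle block, and the Wang--Zhu/Matsushima/Demazure chain (packaged in the paper as Remark \ref{existence of KE} together with Lemma \ref{lattice-centrally-symm-reductive}) for (\ref{item Bc=0})$\Rightarrow$(\ref{item R(P)=-R(P)}). The only cosmetic deviation is in the second step, where the paper notes directly that $\sum_{u\in kP\cap M}u$ is a lattice point fixed by $\Aut P$ and so must be $0$ by Definition \ref{SymmDef}(b), whereas you pass through the (equally valid) identification of the real fixed subspace with $M^{\Aut P}\otimes_\ZZ\RR$.
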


\begin{proof}

 
(\ref{item central sym})$\Rightarrow$(\ref{item BS sym}): This is Lemma \ref{(i)=>(ii)}.

\smallskip\noindent(\ref{item BS sym})$\Rightarrow$(\ref{item Bc_k=0 for all k}):
Since $\Aut P$ is a subgroup of the {\it linear} group $\Aut M\cong\mathrm{GL}(n, \ZZ)$, it follows from Definition \ref{SymmDef} that not only is $P\cap M$ invariant
under $\Aut P$, but so is $kP\cap M$ for each $k\in\NN$. 
That is, for each $g\in\Aut P$, 
one has
$g.(kP\cap M)=kP\cap M$.
In particular, $\sum_{u\in kP\cap M}u\in M$ is fixed by $\Aut P$. Since $P$ is Batyrev--Selivanova symmetric, such fixed point has to be $0\in M$, i.e., $\sum_{u\in kP\cap M}u=0$. Hence,
$$\Bc_k(P)=\frac{1}{\#(kP\cap M)}\sum_{u\in kP\cap M}u=0.$$

\smallskip\noindent(\ref{item Bc_k=0 for all k})$\Leftrightarrow$(\ref{item Bc_k=0 for large k})$\Leftrightarrow$(\ref{item Bc_k=0 for n+2 values of k})$\Rightarrow$(\ref{item Bc=0}): This follows from Theorem \ref{JR2thm}.

\smallskip\noindent(\ref{item Bc=0})$\Rightarrow$(\ref{item R(P)=-R(P)}):
A purely coombinatoric proof is due to Nill \cite[Theorem 1.5]{Nil06}. We give here an alternative proof using toric geometry. By Remark \ref{existence of KE}, $\Aut_0X$ is reductive. The proof is done once we establish the following result:

\begin{lemma}\label{lattice-centrally-symm-reductive}
    If X is complete and simplicial (equivalently, a compact orbifold), then $P$ is centrally lattice symmetric if and only if $\Aut_0X$ is reductive.
\end{lemma}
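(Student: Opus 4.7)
The plan is to use Demazure's structure theorem, specifically part (ii) which says the unipotent radical of $\Aut_0 X$ is generated by the one-parameter subgroups $\phi^{m,\lambda}$ for $m\in R_u(P)$. Since $X$ is a compact toric orbifold, Demazure's theorem is applicable by Theorem \ref{Property}(iii), and $\Aut_0 X$ is a linear algebraic group whose unipotent radical is a connected subgroup determined entirely by the combinatorial set $R_u(P)$.

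First I would observe that $\Aut_0 X$ is reductive if and only if its unipotent radical is trivial (Definition \ref{reductivegp}). By Demazure's structure theorem, the unipotent radical is generated by $\{\phi^{m,\lambda}\,:\,m\in R_u(P),\,\lambda\in\mathbb{C}\}$, and each $\phi^{m,\lambda}$ with $\lambda\neq 0$ is a nontrivial unipotent element. Hence the unipotent radical is trivial if and only if $R_u(P)=\emptyset$.

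Next, a short combinatorial step finishes the proof. By Definition \ref{roots}, $R_u(P)=R(P)\setminus R_s(P)$ where $R_s(P)=R(P)\cap(-R(P))$. Thus $R_u(P)=\emptyset$ is equivalent to $R(P)\subseteq -R(P)$. Taking negatives of both sides gives $-R(P)\subseteq R(P)$, so this inclusion is in fact an equality: $R(P)=-R(P)$, which by Definition \ref{SymmDef}(c) is precisely the condition that $P$ is centrally lattice symmetric.

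The only nontrivial ingredient here is Demazure's structure theorem itself, which is quoted earlier in the paper; once it is in hand, the equivalence is purely formal. I would flag that the orbifold (simplicial) hypothesis on $X$ is used precisely so that Cox's homogeneous-coordinate-ring approach to Demazure's theorem applies and $\Aut_0 X$ is genuinely a linear algebraic group with a well-defined unipotent radical; for more singular $X$ the theorem (and hence the equivalence) need not hold.
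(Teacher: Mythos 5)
Your proof is correct and follows essentially the same route as the paper: invoke Demazure's structure theorem to identify the unipotent radical of $\Aut_0 X$ with the group generated by the $\phi^{m,\lambda}$ for $m\in R_u(P)$, and then translate triviality of the unipotent radical into $R_u(P)=\emptyset$, i.e.\ $R(P)=-R(P)$. If anything, your write-up is slightly more complete than the paper's, which only spells out the direction $R(P)=-R(P)\Rightarrow R_u(P)=\emptyset\Rightarrow\Aut_0 X$ reductive; you also make explicit the converse and the small set-theoretic point that $R(P)\subseteq -R(P)$ forces equality.
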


Lemma \ref{lattice-centrally-symm-reductive} is a corollary of Demazure's Structure Theorem \ref{Demazure's structure theorem}, 
proved in section \ref{Proof of Demazure's Theorem}.
Thus, we postpone the proof of the lemma to \S\ref{SymEqualRed}.
\end{proof}



\subsection{Relation to stability thresholds}

The equivalence \eqref{item BS sym-intro}$\Leftrightarrow$\eqref{item alphaGone-intro} in Theorem \ref{relation of symm notions on metric level} is due to Song \cite[Corollary 1.1]{Son05}. Note that this is the special case of \cite[Theorem 1.4]{JR1} and Demailly's theorem \cite[(A.1)]{CS08}. Cheltsov--Shramov claimed a more general formula without the real torus symmetry. However (as kindly pointed out to us by I. Cheltsov) there is an error in the proof of \cite[Lemma 5.1]{CS08} as the toric degeneration used there need not respect the automorphism. Below we give a proof following \cite{JR1}.

\begin{theorem}{\rm\cite[Theorem 1.4]{JR1}}\label{Song}
    For any subgroup $H$ of $\Aut P$, let $G(H)=H\ltimes(S^1)^n$. Then
    $$
        \alpha_{G\left(H\right)}=\sup\left\{c\in(0,1)\,:\,-\frac{c}{1-c}P^H\subset P\right\},
    $$
    where $P^H$ is the space of fixed points of $P$ under $H$.
\end{theorem}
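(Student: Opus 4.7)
The plan is to reduce the computation to a combinatorial problem on $P$ via Demailly's algebraic characterization of the $\alpha$-invariant, combined with the standard toric dictionary between anticanonical $\QQ$-divisors and lattice points of $P$.

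First, I would apply Demailly's theorem \cite[(A.1)]{CS08} to rewrite
$$
\alpha_{G(H)}(-K_X) = \inf_D \lct(X, D),
$$
where the infimum is over effective $\QQ$-divisors $D \sim_\QQ -K_X$ that are invariant under the compact group $G(H) = H \ltimes (S^1)^n$. Since an algebraic $(S^1)^n$-action on a projective variety extends to the complex torus $(\CC^*)^n$, every such $D$ is automatically $(\CC^*)^n$-invariant. The standard identification $H^0(X, -K_X) \cong \bigoplus_{m \in P \cap M}\CC\cdot\chi^m$ then parametrizes the torus-invariant effective divisors in $|-K_X|_\QQ$ by
$$
D_m := \sum_{i=1}^d (1 + \langle m, v_i\rangle) D_i, \qquad m \in P \cap M_\QQ,
$$
and the additional $H$-invariance, via the induced permutation action of $H$ on $\Delta_1$, forces $m \in P^H \cap M_\QQ$.

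Next, I would compute the lct combinatorially. Since $X$ is a smooth toric Fano, at each torus fixed point corresponding to a maximal cone $\sigma = \Conv(v_{i_1}, \ldots, v_{i_n})$, the divisors $D_{i_j}$ form local coordinate hyperplanes and $D_m$ is an SNC $\QQ$-divisor with coefficients $1 + \langle m, v_{i_j}\rangle \geq 0$. The standard SNC lct formula yields
$$
\lct(X, D_m) = \frac{1}{1 + \max_i \langle m, v_i\rangle}.
$$
Taking the infimum over $m \in P^H \cap M_\QQ$, using density of rational points in the rational polytope $P^H$ together with continuity in $m$, one obtains
$$
\alpha_{G(H)}(-K_X) = \frac{1}{1 + \displaystyle\sup_{m \in P^H}\max_{1\leq i \leq d} \langle m, v_i\rangle}.
$$

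Finally, I would match this with the claimed supremum via the substitution $s = c/(1-c) \in [0, \infty)$. Using $P = \{y \in M_\RR : \langle y, v_i\rangle \geq -1 \text{ for all } i\}$, the containment $-sP^H \subset P$ unpacks as $s\langle y, v_i\rangle \leq 1$ for every $y \in P^H$ and every $i$, so the supremum of admissible $s$ equals $1/\sup_{y\in P^H, i}\langle y, v_i\rangle$. Converting $s$ back to $c$ yields
$$
\sup\Big\{c \in (0,1) : -\tfrac{c}{1-c}P^H \subset P\Big\} = \frac{1}{1 + \sup_{y \in P^H}\max_i\langle y, v_i\rangle},
$$
matching the expression above. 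The main obstacle is justifying the first step: Demailly's reduction to algebraic divisors uses resolution and approximation of psh potentials by $\QQ$-divisors, and the $G(H)$-equivariant version requires care so that averaging over the compact group preserves both the normalization $\sup \varphi = 0$ and the singularity structure in the limit. Once this analytic-to-algebraic bridge is in place, the remaining combinatorial steps are routine. An alternative, essentially the route of \cite{JR1}, is to work directly with $H$-invariant convex functions on $N_\RR$ and their Legendre duals on $P^H$, bypassing Demailly at the cost of a hands-on analytic estimate.
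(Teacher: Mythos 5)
Your proposal reaches the correct answer but takes a genuinely different route from the paper, and there is one place where the argument as written hides a real gap.

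The paper applies Demailly's theorem in the form
$\alpha_{G(H)} = \inf_k\, k\inf\{\lct|V| : |V|\subset|-kK_X|\ \text{a $G(H)$-invariant \emph{linear system}}\}$,
reduces to linear systems spanned by $H$-orbits of monomials, and then uses the arithmetic--geometric mean inequality to show that each such orbit contributes no worse constraint than the single $H$-fixed monomial at the average $\pi_H(u)$. That AM--GM step is the crux: it is precisely what passes from $G(H)$-invariant linear systems (which have nontrivial dimension when the $H$-orbit is nontrivial, and therefore contain \emph{no} $G(H)$-invariant divisor) to single torus-invariant divisors $D_m$ with $m\in P^H$.

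Your step 1 quotes ``Demailly's theorem'' in the form
$\alpha_{G(H)} = \inf_D\lct(X,D)$ over $G(H)$-invariant effective $\QQ$-divisors.
This is the divisor-form of the invariant, not the linear-system form cited as \cite[(A.1)]{CS08} in the paper, and the equality of the two forms is not immediate: the linear-system infimum is taken over a strictly larger set, so a priori it could be strictly smaller. Closing the gap requires exactly the kind of AM--GM (or Bertini-type, or ``average divisor'') argument the paper carries out. You flag ``the main obstacle'' in step 1, but you frame it as an issue of approximating psh potentials and preserving normalization under averaging; the actual obstacle is the passage from $G(H)$-invariant \emph{subspaces} of $H^0(X,-kK_X)$ to $G(H)$-invariant \emph{divisors}. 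If you instead take the divisor-form as the definition (which is Cheltsov--Shramov's algebraic $\alpha_G$, with Demailly's appendix supplying the equivalence with the analytic invariant), then your argument is correct as stated, and it is cleaner than the paper's because you never have to touch the orbit-spanned linear systems---but then you are implicitly relying on an AM--GM argument buried in the appendix of \cite{CS08} rather than spelling it out. Your steps 2 and 3 (identifying $(S^1)^n$-invariant divisors with $D_m$, $m\in P\cap M_\QQ$, restricting to $m\in P^H$ by $H$-invariance, and computing $\lct(X,D_m)=1/(1+\max_i\langle m,v_i\rangle)$ via the SNC structure) are correct and match the paper's invocation of \cite[Corollary 7.4]{BJ20}, since $\|m\|_{-P}=\max_i\langle m,v_i\rangle$. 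The final rewriting via $s=c/(1-c)$ is also correct.

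In short: correct endpoint, genuinely different decomposition of the argument (divisor-level input vs.\ linear-system input followed by explicit AM--GM), with the key inequality that equates the two outsourced to the citation rather than proved. Either state clearly that you are using the divisor form of the $G$-equivariant $\lct$ and cite it as such, or add the AM--GM step yourself.
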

\begin{proof}[Sketch of proof of Theorem \ref{Song}]
    By Demailly's theorem \cite[(A.1)]{CS08},
    \begin{equation}\label{Demailly}
        \alpha_{G(H)}=\inf_{k\in\NN}\alpha_{k,G(H)},
    \end{equation}
    where
    \begin{equation}\label{glct}
        \alpha_{k,G(H)}=k\inf_{\substack{|V|\subset|-kK_X|\\V^{G(H)}=V}}\lct|V|,
    \end{equation}
    and
    $$
        \lct|V|
        :=\sup\left\{c>0\,:\,~ \left(\sum_j|\nu_j(z)|^2\right)^{-{c}} \text{ is locally integrable on }X\text{ for a(ny) basis }\{\nu_j\}\text{ of }V\right\}.
    $$
    Now, since any $(S^1)^n$-invariant linear system $|V|$ is spanned by monomials (see \cite[Lemma 4.8]{JR1}), \eqref{glct} reduces to
    \begin{align*}
        \alpha_{k,G(H)}&=\sup\left\{c>0\,:\,\int_X\left(\sum_{u\in O^{(k)}_i}\left|s_{k,u}\right|_{h_k}^2\right)^{-\frac{c}{k}}d\mu<\infty,~\forall~1\leq i\leq N\right\}\\
        &=\sup\left\{c>0\,:\,\int_X\Big(\sum_{\sigma\in H}\left|s_{k,\sigma u}\right|_{h_k}^2\Big)^{-\frac{c}{k}}d\mu<\infty,~\forall~u\in P\cap M/k\right\},
    \end{align*}
    where $O^{(k)}_1,\ldots,O^{(k)}_N$ are the orbits of the action of $H$ on $k^{-1}M\cap P$, $s_{k,u}$ is the monomial section in $H^0(X,-kK_X)$ corresponding to the point $u\in k^{-1}M\cap P$, $h_k=h^{\otimes k}$ for a(ny) fixed metric $h$ on $-K_X$, and $d\mu$ is a(ny) smooth volume form on $X$.

    By \eqref{Demailly},
    \begin{equation}\label{alpha_G}
        \alpha_{G(H)}=\sup\left\{c>0\,:\,\int_X\Big(\sum_{\sigma\in H}\left|s_{k,\sigma u}\right|_{h_k}^2\Big)^{-\frac{c}{k}}d\mu<\infty,~\forall~u\in P\cap M/k,~k\in\NN\right\}.
    \end{equation}

    Now, observe that by the geometric-arithmetic mean inequality,
    \begin{align*}
        \left(\frac{1}{|H|}\sum_{\sigma\in H}\left|s_{k,\sigma u}\right|_{h_k}^2\right)^{-\frac{c}{k}}\leq\prod_{\sigma\in H}\left|s_{k,\sigma u}\right|_{h_k}^{-\frac{2c}{|H|k}}
        &=\left|s_{|H|k,\pi_H(u)
        }\right|_{h_{|H|k}}^{-\frac{2c}{|H|k}}
        \cr
        &=\left(\frac{1}{|H|}\sum_{\sigma\in H}\left|s_{|H|k,\sigma\pi_H(u)
        }\right|_{h_{|H|k}}^2\right)^{-\frac{c}{|H|k}},
    \end{align*}
    where $\pi_H:=\frac{1}{|H|}\sum_{\eta\in H}\eta\in
    \hbox{End}(M_\QQ)$ is the map that takes a point in $M_\RR$ to the average of its $H$-orbit, since $\pi_H(u)$ is fixed by $H$. Therefore the supremum in \eqref{alpha_G} is unchanged if restricted to those $u$ fixed by $H$. Thus
    $$
        \alpha_{G(H)}=\inf\left\{k\cdot\lct\left(s_{k,u}\right)\,:\,~u\in P^H\cap\frac{1}{k}M,~k\in\NN\right\}.
    $$
    Recall \cite[Corollary 7.4]{BJ20},
    $$
        k\cdot\lct\left(s_{k,u}\right)=\frac{1}{1+\left\|u\right\|_{-P}}.
    $$
    We have
    \begin{equation}\label{sketch}
        \alpha_{G(H)}=\inf\left\{\frac{1}{1+\left\|u\right\|_{-P}}\,:\,~u\in P^H\cap M_\QQ\right\}=\frac{1}{1+\sup\limits_{P^H\cap M_\QQ}\left\|\,\cdot\,\right\|_{-P}}=\frac{1}{1+\max\limits_{P^H}\left\|\,\cdot\,\right\|_{-P}}.
    \end{equation}
    Equivalently, since
    $$
        \left\|u\right\|_{-P}=\inf\left\{\lambda>0\,\middle|\,-\frac{u}{\lambda}\in P\right\},
    $$
    we have
    $$
        \frac{1}{1+\left\|u\right\|_{-P}}=\sup\left\{c\in\left(0,1\right)\,\middle|\,-\frac{c}{1-c}u\in P\right\},
    $$
    and we can rewrite \eqref{sketch} as
    $$
        \alpha_{G(H)}=\sup\left\{c\in(0,1)\,:\,-\frac{c}{1-c}P^H\subset P\right\}.
    $$
\end{proof}

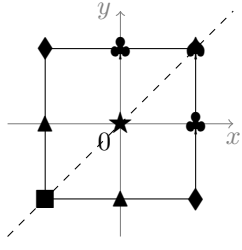
\begin{figure}
    \centering
    \begin{tikzpicture}
        \draw[->,gray](-1.5,0)--(1.5,0)node[below]{$x$};
        \draw[->,gray](0,-1.5)--(0,1.5)node[left]{$y$};
        \draw(0,0)node[below left]{$0$};
        \draw(1,0)node{$\clubsuit$}--(1,1)node{$\spadesuit$}--(0,1)node{$\clubsuit$}--(-1,1)node{$\blacklozenge$}--(-1,0)node{$\blacktriangle$}--(-1,-1)node{$\blacksquare$}--(0,-1)node{$\blacktriangle$}--(1,-1)node{$\blacklozenge$}--cycle;
        \draw(0,0)node{$\bigstar$};
        \draw[dashed](-3/2,-3/2)--(3/2,3/2);
    \end{tikzpicture}
    \caption{The six orbits $O_1^{(1)},\ldots,O_6^{(1)}$ of the action of the group generated by the reflection about $y=x$ on the polytope corresponding to $\PP^1\times\PP^1$ with $k=1$.}\lb{P2figure}
\end{figure}

\begin{proof}[Proof of \eqref{item BS sym-intro}$\Leftrightarrow$\eqref{item alphaGone-intro}]
    If $P$ is Batyrev--Selivanova symmetric, $P^{\Aut P}=\{0\}$. Therefore $\lambda P^{\Aut P}\subset P$ for any $\lambda>0$. By Theorem \ref{Song}, $\alpha_{G(\Aut P)}=1$.

    If $P$ is not Batyrev--Selivanova symmetric, $P^H$ contains a non-zero point. Since $P$ is bounded, there is $\lambda_0>0$ such that $-\lambda P^H\nsubseteq P$ for $\lambda>\lambda_0$. Therefore $\alpha_G\leq\frac{\lambda_0}{1+\lambda_0}<1$.
\end{proof}

By Blum--Jonsson's reformulation of \cite[Corollary 7.16]{BJ20}
the $\delta$-invariant of Fujita--Odaka \cite{FO18}
 $\delta(-K_X)=1$ if and only if $\Bc(P)=0$, which is equivalent to the existence of K\"ahler--Einstein metrics \cite[Corollary 1.2]{WZ04}. By Rubinstein--Tian--Zhang, $\delta_k(-K_X)=1$ if and only if $\Bc_k(P)=0$  \cite[Corollary 7.1]{RTZ21}.

\begin{theorem}{\rm\cite[Corollary 7.16]{BJ20},\cite[Corollary 7.1]{RTZ21}}\label{BJ}
    Let $P$ be the polytope associated to the toric Fano manifold. Then
    $$
        \delta\left(-K_X\right)=\min_i\frac{1}{1+\left\langle\Bc\left(P\right),v_i\right\rangle},\qquad\delta_k\left(-K_X\right)=\min_i\frac{1}{1+\left\langle\Bc_k\left(P\right),v_i\right\rangle}.
    $$
\end{theorem}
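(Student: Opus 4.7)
The plan is to combine the variational characterization of $\delta$ and $\delta_k$ as infima of log-discrepancy-to-expected-vanishing ratios with the toric equivariance principle. By the definitions of Fujita--Odaka and Rubinstein--Tian--Zhang,
\[
\delta(-K_X) = \inf_v \frac{A_X(v)}{S_{-K_X}(v)}, \qquad \delta_k(-K_X) = \inf_v \frac{A_X(v)}{S_k(v)},
\]
taken over divisorial valuations $v$ on $X$, where $S_{-K_X}(v) = \frac{1}{(-K_X)^n}\int_0^\infty \Vol(-K_X - tv)\,dt$ and $S_k(v)$ is its finite-dimensional analogue computed on bases of $H^0(X, -kK_X)$. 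Since $T \cong (\CC^*)^n$ acts on $X$ and the functionals $A_X, S_{-K_X}, S_k$ are all $T$-invariant, an averaging argument over the compact subtorus $(S^1)^n$ restricts both infima to $T$-invariant divisorial valuations. On a toric variety, these are precisely the orders of vanishing $\ord_{D_{v_i}}$ along the prime toric divisors indexed by $v_i \in \Delta_1$. Thus the infima collapse to minima over the finite index set $\{v_1, \ldots, v_d\}$.

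Once this reduction is in place, the rest is a polytope computation. First, $A_X(D_{v_i}) = 1$ since $D_{v_i}$ is a toric prime divisor on a smooth toric Fano. For the continuous $S$, I would use the $T$-weight decomposition
\[ H^0(X, -mK_X) = \bigoplus_{u \in mP \cap M} \CC\,\chi^u, \qquad \ord_{D_{v_i}}(\chi^u) = \langle u, v_i\rangle + 1 \cdot m,\]
and pass to the Riemann-sum limit (equivalently, invoke the Newton--Okounkov body integral formula for $S$) to obtain
\[
S_{-K_X}(D_{v_i}) = \frac{1}{\Vol(P)}\int_P \bigl(\langle y, v_i\rangle + 1\bigr)\,dy = 1 + \langle \Bc(P), v_i\rangle,
\]
whence $\delta(-K_X) = \min_i \tfrac{1}{1+\langle \Bc(P), v_i\rangle}$. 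For $\delta_k$, the $T$-weight decomposition exhibits the canonical monomial family $\{\chi^u\}_{u \in kP \cap M}$ as essentially the only $T$-invariant basis (each weight space is $1$-dimensional), so the $T$-invariant basis-type divisor $D_k := \frac{1}{kN_k}\sum_{u} \{\chi^u = 0\}$ realizes the infimum. Its order of vanishing along $D_{v_i}$ is the discrete analogue of the integral above,
\[
\ord_{D_{v_i}}(D_k) = \frac{1}{kN_k}\sum_{u \in kP \cap M}\bigl(\langle u, v_i\rangle + k\bigr) = 1 + \langle \Bc_k(P), v_i\rangle,
\]
and computing $\lct(X, D_k)$ on toric divisors yields the quantized formula.

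The main obstacle is justifying the equivariance reduction to $T$-invariant valuations. For $\delta$, this is the genuine technical content of Blum--Jonsson's Corollary 7.16: while an individual valuation can be twisted by the group action, translating this into a statement about the infimum of $A/S$ requires the convexity of $S$ along one-parameter subgroups in the space of valuations, proved via the Okounkov body machinery. For $\delta_k$, the analogous equivariance of basis-type divisors (carried out in \cite{RTZ21}) is more elementary because $H^0(X, -kK_X)$ is finite-dimensional, the $T$-action splits it into $1$-dimensional weight spaces, and averaging reduces the problem to monomial bases. Granted these two equivariance inputs, the subsequent polytope calculation is a straightforward unpacking of the toric dictionary between $P$, lattice points, and orders of vanishing of monomial sections.
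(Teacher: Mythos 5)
The paper does not prove Theorem \ref{BJ}; it cites it from \cite[Corollary 7.16]{BJ20} and \cite[Corollary 7.1]{RTZ21}. So the comparison is against the standard argument in the literature rather than an in-paper proof. Your general strategy — use the valuative/basis-divisor characterizations of $\delta$ and $\delta_k$, reduce by $T$-equivariance, then compute in the toric dictionary — is the right one and matches the sources.

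There is, however, a genuine gap in the reduction step. You assert that the $T$-invariant divisorial valuations on a toric variety ``are precisely the orders of vanishing $\ord_{D_{v_i}}$ along the prime toric divisors indexed by $v_i\in\Delta_1$.'' This is false: $T$-invariant divisorial valuations on $X$ are in bijection with all primitive lattice points $v\in N$, not just the rays of $\Delta$ (e.g.\ on $\PP^2$ with rays $(1,0),(0,1),(-1,-1)$, the blow-up of the torus-fixed point gives the $T$-invariant divisorial valuation $\ord_{(1,1)}$, and $(1,1)\notin\Delta_1$). So after the equivariance reduction one is still left with the infimum
$$
\inf_{v\in N\setminus\{0\}}\frac{A_X(\ord_v)}{S_{-K_X}(\ord_v)}
=\inf_{v\in N\setminus\{0\}}\frac{\psi(v)}{\psi(v)+\langle\Bc(P),v\rangle},
$$
where $\psi$ is the piecewise-linear function on $N_\RR$ with $\psi(v_i)=1$ (so $A_X(\ord_v)=\psi(v)$) and $S_{-K_X}(\ord_v)=\psi(v)+\langle\Bc(P),v\rangle$. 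The collapse of this infimum to a minimum over $i$ is a further step: since $X$ is Fano, $\{\psi\le 1\}=\co(v_1,\ldots,v_d)$ is a convex reflexive polytope with vertex set exactly $\Delta_1$, and minimizing $A/S$ is equivalent to maximizing the homogeneous function $\langle\Bc(P),v\rangle/\psi(v)$, i.e.\ maximizing the linear functional $\langle\Bc(P),\cdot\rangle$ over the level set $\{\psi=1\}$; a linear functional on a polytope is maximized at a vertex, hence at some $v_i$. This argument — which uses the Fano convexity of $\psi$ in an essential way — is the actual content of the ``min over $i$'' in the theorem, and it is missing from your sketch. The same issue is hidden in the $\delta_k$ part: $\lct(X,D_k)$ for the torus-invariant basis divisor $D_k=\sum_i\big(1+\langle\Bc_k(P),v_i\rangle\big)D_{v_i}$ is an infimum of $A_X(\ord_v)/\ord_v(D_k)$ over all $v\in N$, and one needs to check that for a smooth toric $X$ this collapses to $\min_i\big(1+\langle\Bc_k(P),v_i\rangle\big)^{-1}$ (true by the same cone-by-cone linearity argument, but it should be said). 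The remaining computations — $A_X(D_{v_i})=1$, $\ord_{D_{v_i}}(\chi^u)=\langle u,v_i\rangle+k$, and $S_{-K_X}(D_{v_i})=1+\langle\Bc(P),v_i\rangle$ via $\int_0^\infty\Vol(P\cap\{\langle\cdot,v_i\rangle+1\ge t\})\,dt=\int_P(\langle y,v_i\rangle+1)\,dy$ — are all correct.
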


\begin{corollary}
    Let $P$ be the polytope associated to the toric Fano manifold. Then $\delta(-K_X)=1$ if and only if $\Bc(P)=0$, and $\delta_k(-K_X)=1$ if and only if $\Bc_k(P)=0$.
\end{corollary}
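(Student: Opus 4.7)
The corollary is a direct algebraic consequence of Theorem \ref{BJ}. The plan is to analyze when the minimum $\min_i \frac{1}{1+\langle \Bc(P), v_i\rangle}$ equals $1$, and similarly for $\delta_k$, reducing matters to the observation that a complete fan's rays positively span $N_\RR$.

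The ``if'' direction is immediate: if $\Bc(P) = 0$, plugging into the formula of Theorem \ref{BJ} yields $\delta(-K_X) = \min_i \frac{1}{1+0} = 1$, and identically for $\delta_k$ when $\Bc_k(P)=0$.

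For the converse, suppose $\delta(-K_X) = 1$, and set $x_i := \langle \Bc(P), v_i\rangle$. Since $\Bc(P) \in P$ by convexity and $P = \{y : \langle y, -v_i\rangle \leq 1\}$ by \eqref{PDef2Eq}, we have $x_i \geq -1$, so $1 + x_i \geq 0$ and the expression $\min_i \frac{1}{1+x_i}$ is well-defined (with the convention $1/0 = +\infty$ if any boundary term appears). The equality $\min_i \frac{1}{1+x_i} = 1$ then forces $\max_i x_i = 0$; in particular $\langle \Bc(P), v_i\rangle \leq 0$ for every $v_i \in \Delta_1$.

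The main step, which I expect to be the only nontrivial one, is upgrading this to $\Bc(P) = 0$. Here the completeness of the fan (Theorem \ref{Property}) enters crucially: since $\bigcup_i \sigma_i = N_\RR$, every $v \in N_\RR$ is a non-negative $\RR$-linear combination of some primitive generators $v_i$, so $\langle \Bc(P), v\rangle \leq 0$ for \emph{all} $v \in N_\RR$. Applying this to both $v$ and $-v$ yields $\langle \Bc(P), v\rangle = 0$ for all $v$, i.e., $\Bc(P) = 0$. The same argument applies verbatim to $\Bc_k$, using that $\Bc_k(P) \in P$ since the lattice points of $kP$ lie in $kP$, their average does too by convexity, and division by $k$ lands the result back in $P$. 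I do not anticipate any serious obstacle beyond cleanly handling possible boundary behavior of $\Bc_k(P)$ in the denominators.
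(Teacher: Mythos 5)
Your proof is correct and takes essentially the same approach as the paper: both directions reduce to the formula in Theorem \ref{BJ}, and the converse uses completeness of the fan to conclude that the rays of $\Delta$ cannot all lie in the closed half-space $\{\langle\,\cdot\,, \Bc(P)\rangle \le 0\}$ unless $\Bc(P)=0$. The paper phrases this contrapositively (if $\Bc(P)\neq 0$, find $v_j$ with $\langle\Bc(P),v_j\rangle>0$ to force $\delta<1$), while you argue directly from $\delta=1$, but the key fact invoked is identical.
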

\begin{proof}
    If $\Bc(P)=0$, by Theorem \ref{BJ}, $\delta(-K_X)=1$. If $\Bc(P)\neq0$, by completeness of the fan, we can find $v_j$ in the half space $\{\langle\,\cdot\,,\Bc(P)\rangle>0\}$. Then
    $\delta(-K_X)\leq(1+\langle\Bc(P),v_j\rangle)^{-1}<1$.

    The proof for $\delta_k$ and $\Bc_k$ is identical.
\end{proof}

\subsection{Relation to canonical metrics}


In this subsection we finish the proof of Theorem \ref{relation of symm notions on metric level}. Given Theorem \ref{relation of symm notions} and Lemma \ref{lattice-centrally-symm-reductive}, we are left with proving the following:
\begin{theorem}\label{SaitoThm}
    The existence of $k$-anticanonical balanced metric is equivalent to $\Bc_k(P)=0$, i.e., $\delta_k(-K_X)=1$.
\end{theorem}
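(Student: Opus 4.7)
The plan is to combine the variational characterization of balanced metrics (already established in the preceding lemma) with a torus-invariant reduction to a finite-dimensional convex problem, and then identify the barycenter condition as the exact solvability criterion. The second equivalence $\Bc_k(P) = 0 \Leftrightarrow \delta_k(-K_X) = 1$ is already Theorem \ref{BJ}, so only the first half of the claim requires work.

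By the preceding lemma, $h_\varphi^k \in \cH_k$ is $k$-anticanonically balanced iff $\varphi$ is a critical point of the quantized Ding functional $F_k^{f_h, h, K}$; since this functional is convex along geodesic rays in $\cB_k$ (the $\log\det$ term is linear, while the $-\log\int e^{f_h-\varphi}\omega^n$ term is convex in $H$ through $\FS_k$), critical points coincide with minimizers modulo the overall scaling $H \mapsto cH$. Choose the reference $h$ to be $(S^1)^n$-invariant, so that $F_k^{f_h, h, K}$ is $(S^1)^n$-invariant as well; averaging any minimizing sequence then reduces the search to $(S^1)^n$-invariant $H \in \cB_k$, which in the toric monomial basis $\{s_u : u \in kP \cap M\}$ of $H^0(X, -kK_X)$ is diagonal. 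Writing $H = \diag(e^{-a_u})$, the functional reduces to an explicit strictly convex function $\tilde F_k(a)$ on $\RR^{|kP \cap M|}$, invariant under $a \mapsto a + c\mathbf{1}$, and carrying an additional $\RR^n$-symmetry $a_u \mapsto a_u - \langle u, \xi\rangle$ induced by pullback along the noncompact part of $(\mbc^*)^n$: under this symmetry the nonlinear term is invariant by $(\mbc^*)^n$-equivariance, while the linear term $-\tfrac{1}{kd_k}\sum_u a_u$ shifts by a constant proportional to $\langle \Bc_k(P), \xi\rangle$.

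If $\Bc_k(P) = 0$, the full $\RR^n$-symmetry acts trivially on $\tilde F_k$, which therefore descends to a strictly convex functional on the quotient $\RR^{|kP \cap M|}/(\RR\mathbf{1} \oplus \RR^n)$; coercivity on this quotient together with direct-method compactness yields a minimizer, hence a $k$-anticanonically balanced metric. Conversely, if $\Bc_k(P) \neq 0$, flowing along the orbit direction $\xi = \Bc_k(P)$ drives $\tilde F_k \to -\infty$, ruling out any critical point. The main obstacle is the coercivity estimate on the quotient when $\Bc_k(P) = 0$: one must rule out any escape to infinity modulo the quotiented-out symmetries. This step is carried out by exploiting that the sections $\{s_u\}$ give a projective embedding of $X$, so along any direction escaping the quotient at least one $|s_u|^2$ becomes dominant in $\sum_u e^{a_u}|s_u|^2$, forcing the nonlinear $-\log\int$ term to grow and dominate the linear term whose slope has been killed by the assumption $\Bc_k(P)=0$.
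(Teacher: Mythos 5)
Your second equivalence is indeed just Theorem \ref{BJ}, and the paper disposes of the first equivalence by a chain of citations: Saito--Takahashi (\cite[Theorem 1.2]{ST}) for balanced $\Rightarrow$ F-polystable, Hashimoto (\cite[Theorem 1]{Has}) for the converse, and Saito (\cite[Theorem A]{Saito}) for F-polystable $\Leftrightarrow \Bc_k(P)=0$. You instead propose to bypass F-polystability entirely via a direct variational/convex-reduction argument, which would be a genuinely more self-contained route. The non-existence direction of your argument (if $\Bc_k(P)\neq 0$, flow along $\xi=\Bc_k(P)$ to drive $F_k$ to $-\infty$) is fine, and the reduction to $(S^1)^n$-invariant inner products, diagonal $H=\diag(e^{-a_u})$, and the slope computation $\frac{d}{dt}F_2 \propto \langle \Bc_k(P),\xi\rangle$ are all correct.

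However, the coercivity step that you flag as the ``main obstacle'' is in fact where the proof breaks, and your sketch for it does not close the gap. The issue is that $\Bc_k(P)=0$ only kills the linear term's slope \emph{along the $\RR^n\oplus\RR\mathbf{1}$ orbit directions}; it says nothing a priori about escape directions $b$ transverse to that subspace. Along such a ray $a+tb$ (normalize $\max_u b_u=0$, $\min_u b_u<0$), the nonlinear term $F_1=-\log\int(\sum_u e^{a_u+tb_u}|s_u|^2_h)^{-1/k}\mu_h$ in fact \emph{decreases} (fewer monomials contribute, so the integrand grows), while $F_2$ increases; which effect wins is governed by a comparison of a log canonical threshold of the limiting linear subsystem against a normalized degree. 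The hypothesis $\delta_k(-K_X)=1$ gives only that this slope is $\geq 0$, not strictly positive off the torus, so your claim that ``at least one $|s_u|^2$ becomes dominant, forcing the nonlinear term to grow'' reverses the sign and glosses over precisely the borderline degeneracy. What is actually needed is an identification of \emph{all} the zero-slope directions of the quantized Ding functional in the toric reduction with the torus orbit — a quantized polystability / properness-modulo-automorphisms statement — and this is what the F-stability chain (Saito--Takahashi, Saito, Hashimoto) supplies. Without it, the direct-method minimization can fail to converge. This is not a routine epsilon: the paper itself emphasizes immediately before this theorem that the whole difficulty is the borderline case $\delta_k=1$, and that is exactly the case your sketch does not handle.
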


Indeed, even for non-toric cases, it is known that $\delta_k<1$ implies non-existence, while $\delta_k>1$ implies existence \cite{RTZ21}. The difficulty here is the borderline case: when $X$ is toric, $\delta_k=1$ also implies existence of $k$-anticanonical balanced metric.

\begin{proof}
    It was shown by Saito--Takahashi \cite[Theorem 1.2]{ST} that the existence of a $k$-anticanonical balanced metric implies F-polystability, and Hashimoto \cite[Theorem 1]{Has} showed the converse. The equivalence of F-polystability and $\Bc_k(P)=0$ is due to Saito \cite[Theorem A]{Saito}. This completes the proof of Theorem \ref{SaitoThm} and hence of Theorem \ref{relation of symm notions on metric level}.
\end{proof}

\begin{remark}\label{dimupto6Rem}
For toric Fano manifolds of dimension up to 6, the existence of K\"ahler--Einstein metrics implies Batyrev--Selivanova symmetry \cite[Proposition 1.4]{NP11}. More precisely, we can formulate the following:
    Let $X$ be a toric Fano manifold with $\dim X\leq6$. The following are equivalent:
    \begin{enumerate}[$\rm 1)$]
        \item $X$ is Batyrev--Selivanova symmetric;\label{symmetric}
        \item $\delta_k(-K_X)=1$ for all $k$;\label{delta_k=1}
        \item $\delta_k(-K_X)=1$ for all sufficiently large $k$;\label{delta_k=1 k large}
        \item $\delta(-K_X)=1$, which is equivalent to the existence of K\"ahler--Einstein metrics.\label{delta=1}
    \end{enumerate}
\end{remark}
\begin{proof}
    We have shown in Theorem \ref{relation of symm notions} that $\ref{symmetric})\Rightarrow\ref{delta_k=1})\Rightarrow\ref{delta_k=1 k large})\Rightarrow\ref{delta=1})$. By Nill--Paffenholz's exhaustive computer search \cite{Obr07},\cite[Proposition 2.1]{NP11} among toric Fano manifolds of dimension up to 8 whose associated polytope has barycenter $0$, there are precisely three of them (one in dimension $7$ and two in dimension $8$) that are not Batyrev--Selivanova symmetric (indeed they even have non-zero quantized barycenter as we show in Appendix \ref{Nill--Paffenholz's counterexamples}).
\end{proof}

\subsection{\texorpdfstring{\textup{(\ref{item central sym})}$\;\not\Leftarrow\;$\textup{(\ref{item BS sym})}}{(i)<-(ii)}}
Take, for instance the polytope associated to $\PP^2$: 
its vertices are $(-1,2)$, $(-1,-1)$ and $(2,-1)$. 
Let
\begin{align*}
    a=\left(\begin{matrix}
        -1 & 1 \\
        -1 & 0
    \end{matrix} \right).
\end{align*}
Then $a:(-1,2)\mapsto(-1,-1); (-1,-1)\mapsto(2,-1); (2,-1)\mapsto(-1,2)$. 
Therefore $a$ maps $P$ to itself, i.e., $a\in\Aut P$. Since $a$ does not have non-zero fixed point, $P$ is Batyrev--Selivanova symmetric. 
However, $P\not=-P$ 
 (see Figure \ref{Fig for R(P)}).

\subsection{\texorpdfstring{\textup{(\ref{item BS sym})}$\;\not\Leftarrow\;$\textup{(\ref{item Bc_k=0 for all k})}}{(ii)<-(iii)}}
We do not currently know if there is a counterexample,
but if it exists it should be in dimension at least 9, based
on Remark \ref{dimupto6Rem} and the calculation in the Appendix.
By Theorem we would get a sequence of identities on $P$
given that all the coefficients in the asymptotic expansion for
$\delta_k$ must vanish. Perhaps this could be useful.

\subsection{\texorpdfstring{\textup{(\ref{item Bc_k=0 for large k})}$\,\not\Leftarrow\;$\textup{(\ref{item Bc=0})}}{(iv)<-(vi)}}
Nill--Paffenholz's counterexamples have $\Bc(P)=0$, but are not Batyrev--Selivanova symmetric. In fact, as we compute in 
Appendix \ref{Nill--Paffenholz's counterexamples}, they
satisfy $\Bc_k(P)\not=0$ for $k=1,2,3$,
that by Theorem
\ref{relation of symm notions}
means $\Bc_k(P)\not=0$
for all $k$ except for up to $n$ values $k$.
In addition, by \cite[Proposition 2.1]{NP11}, those counterexamples have the $1$-dimensional fixed space for $\Aut P$.
Since $\Bc_k(P)$ is fixed by $\Aut P$, then all $\Bc_k(P)$ lie an same line. Thus $\Bc(P)$ and $\Bc_k(P)$ lie on the same line.

\subsection{\texorpdfstring{\textup{(\ref{item Bc=0})}$\,\not\Leftarrow\;$\textup{(\ref{item R(P)=-R(P)})}}{(vi)<-(vii)}}
\begin{enumerate}[(1)]
    \item Futaki's 4-dimensional example: for many years after Matsushima's theorem, it was not known whether the reductivity of the automorphism group is merely a necessary condition for the existence of a K\"ahler--Einstein metric. Futaki showed that indeed is the case by introducing his invariant and computing the following example \cite[Section 3]{Fut83} and \cite[Remark 4.9]{Rub14}.
    
Consider the bundle $E:\cO_{\PP^1}(-1)\oplus\cO_{\PP^2}(-1)\rightarrow\PP^1\times\PP^2$.
Let $X$ be the total space of projective bundle $\PP(E)$ over $\PP^1\times\PP^2$. Since $\cO_{\PP^1}(-1)$ and $\cO_{\PP^2}(-1)$ are toric line bundles, then $\PP(E)$ is a toric manifold of dimension $4$. Let $P$ be its associated polytope.
By \cite[Section 3]{Fut83} and \cite{Wan91}, it does not admit K\"ahler--Einstein metrics, i.e., $\Bc(P)\neq 0$ (Remark \ref{existence of KE}). 
By \cite[Proposition 3.2]{Sak86}, we have that
\begin{align*}
    \Aut_0X\simeq& \Aut_0(\PP^1\times\PP^2)\times\CC^* \\
    =& \mathrm{PGL}(2, \CC)\times \mathrm{PGL}(3, \CC)\times\CC^*,
\end{align*}
which is reductive, i.e., by Lemma \ref{lattice-centrally-symm-reductive}, the associated polytope is centrally lattice symmetric.

Futaki's 4-dimensional example generalizes as follows. Let $X$ be the blow-up of $\PP^{n_1+n_2+1}$ along $V(Z_0,\ldots,Z_{n_1})$ and $V(Z_{n_1+1},\ldots,Z_{n_1+n_2+1})$, i.e.,
\begin{equation}\label{X}
    X=\{(u,v,w)\in\PP^n\times\PP^{n_1}\times\PP^{n_2}\,:\,(u_0,\ldots,u_{n_1})\in v, (u_{n_1+1},\ldots,u_{n_1+n_2+1})\in w\}.
\end{equation}
The total space of the bundle $E:\cO_{\PP^{n_1}}(-1)\oplus\cO_{\PP^{n-2}}(-1)\to\PP^{n_1}\times\PP^{n_2}$ is realized as
$$
\{(v,w,x,y)\in\PP^{n_1}\times\PP^{n_2}\times\CC^{n_1+1}\times\CC^{n_2+1}\,:\,x\in v, y\in w\}.
$$
After taking the projective space of $\CC^{n_1+1}\times\CC^{n_2+1}\cong\CC^{n_1+n_2+2}$, we obtain the total space of the projective bundle $\PP(E)$
$$
\left\{(v,w,u)\in\PP^{n_1}\times\PP^{n_2}\times\PP^{n_1+n_2+1}\middle|(u_0,\ldots,u_{n_1})\in v, (u_{n_1+1},\ldots,u_{n_1+n_2+1})\in w\right\},
$$
which is the same as \eqref{X}.

$X$ has a toric structure with the primitive elements of the rays of its fan being the columns of the matrix
$$
\begin{pmatrix}
    1   &   &       &   &-1     &0      &0\\
        &1  &       &   &-1     &\vdots &\vdots\\
        &   &       &   &       &0      &0\\
        &   &\ddots &   &\vdots &-1     &1\\
        &   &       &   &       &\vdots &\vdots\\
        &   &       &1  &-1     &-1     &1
\end{pmatrix},
$$
where the first $n_1$ rows of the last two cloumns are $0$.

Indeed, the first $n_1+n_2+2$ columns correspond to $\PP^{n_1+n_2+1}$ with each column corresponding to the hyperplanes $V(Z_1),\ldots,V(Z_{n_1+n_2+1}),V(Z_0)$, respectively. Recall \cite[(3.3.3)]{CLS11}. Since $V(Z_0,\ldots,Z_{n_1})$ corresponds to the cone spanned by $(-1,\ldots,-1)^\mathsf{T}$ and the first $n_1$ columns, blowing it up adds a new ray spanned by their sum, namely $(0,\ldots,0,-1,\ldots,-1)^\mathsf{T}$, where the first $n_1$ elements are $0$. Similarly, the cone corresponding to $V(Z_{n_1+1},\ldots,Z_{n_1+n_2+1})$ is spanned by the remaining $n_2+1$ columns, and blowing it up adds a new ray spanned by $(0,\ldots,0,1,\ldots,1)^\mathsf{T}$, where the first $n_1$ elements are $0$.

For any $n_1,n_2\geq1$, $P$ is centrally lattice symmetric; for $n_1\neq n_2$, $\Bc(P)\neq0$. In that case, $X$ has a reductive automorphism group but admits no K\"ahler--Einstein metric \cite[Lemma 2.1]{Fut91}.

\item Toric Fano 3-fold $No. 5.2$ $X$ is  K-unstable manifold  (see \cite[p.90 Table 3.1, Theorem 3.16]{ACC23}).  First, one takes the blow-up of $\PP^3$ in two disjoint lines, denoted by $X_0$, which is toric Fano 3-fold $No. 3.25$. Then $X$ is obtained by blow-up of $X_0$ in two curves contracted by the birational morphism to $\PP^3$ which are both contained in one exceptional surface.

The fan of $\PP^3$ is generated by $(1,0,0), (0,1,0), (0,0,1), (-1,-1,-1)$.
The toric blowup is corresponding to the \textit{star subdivision} (see \cite[Definition 3.3.17]{CLS11}).
Then the fan of $X_0$ is generated by (see \cite[Case (i)]{WW82})
\begin{align*}
    \begin{pmatrix}
    1 &0 &0 &1  &-1  &-1   \\
    0 &1 &0 &-1 &0   &0 \\
    0 &0 &1 &0  &-1  &0
\end{pmatrix}.
\end{align*}
As classified in \cite[Section 3]{WW82}, $X$ has a toric structure with the primitive elements of the rays of its fan being the columns of the matrix
\begin{align*}
    \begin{pmatrix}
    1 &0 &0 &0  &0  &0  &0  &-1   \\
    0 &1 &0 &-1 &-1 &0  &1  &0\\
    0 &0 &1 &1  &0  &-1 &-1 &-1
\end{pmatrix}.
\end{align*}
and $(-K_X)^3=36$.
Then, by
$L=-K_X$, its associated polytope is given by
\begin{align*}
    P=\left\{(x_1,x_2,x_3)\in\RR^3~\left|~ 
    \begin{array}{cc}
         -1\leq x_1\leq 1-x_3, & -1\leq x_2\leq 1, \\
         -1\leq x_3\leq 1, & -1\leq x_3-x_2\leq 1.
    \end{array}\right.
    \right\}.
\end{align*}

\begin{figure}
    \centering
    \begin{tikzpicture}
        \draw({-3/8},{-1-3/8*sqrt(3)})--({-3/8},{-3/8*sqrt(3)})--({1-3/8},{-3/8*sqrt(3)})--({1-3/8},{-1-3/8*sqrt(3)})--cycle--(-1,-1)--(-1,1)--({-3/8},{-3/8*sqrt(3)});
        \draw(-1,1)--({-1+3/8},{2+3/8*sqrt(3)})--({3/8},{2+3/8*sqrt(3)})--(1,1)--({1-3/8},{-3/8*sqrt(3)});
        \draw({1-3/8},{-1-3/8*sqrt(3)})--(1,-1)--(1,1);
        \draw[dashed](1,-1)--({3/8},{-1+3/8*sqrt(3)})--({-1+3/8},{-1+3/8*sqrt(3)})--(-1,-1);
        \draw[dashed]({3/8},{-1+3/8*sqrt(3)})--({3/8},{2+3/8*sqrt(3)});
        \draw[dashed]({-1+3/8},{-1+3/8*sqrt(3)})--({-1+3/8},{2+3/8*sqrt(3)});
        \draw[gray,->](0,1)--(0,3.5)node[left]{$x_1$};
        \draw[gray,dashed](1,0)--(0,0)--(0,1);
        \draw[gray,->](1,0)--(2,0)node[below]{$x_2$};
        \draw[gray,dashed](0,0)--({-3/8},{-3/8*sqrt(3)});
        \draw[gray,->]({-3/8},{-3/8*sqrt(3)})--(-1.2,{-1.2*sqrt(3)})node[left]{$x_3$};
        \filldraw(0,-1)circle(1pt);
        \filldraw(0,1)circle(1pt);
    \end{tikzpicture}
    \caption{The polytope $P$ for the toric Fano 3-fold $No. 5.2$. As marked in the graph, $R(P)=\{(-1,0,0),(1,0,0)\}$. In particular, $R(P)=-R(P)$.}
    \label{Fano 3fold No5.2}
\end{figure}
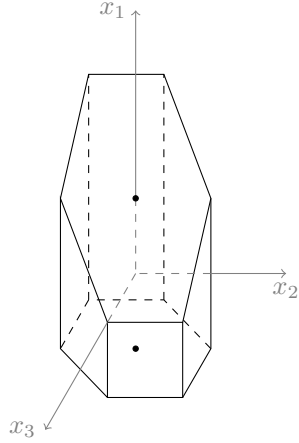

Letting, $\Bc(P)=(\Bc(P)_1,\Bc(P)_2,\Bc(P)_3)$, then
\begin{align*}
    &\Vol(P)\Bc(P)_1     \\
    =&\int_P x_1dx_1dx_2dx_3 \\
    =&\int_{x_2}\int_{x_3}\int_{-1}^{1-x_3}x_1dx_1dx_3dx_2 \\
    =&\int_0^1\int_{x_2-1}^1(\frac{1}{2}x_3^2-x_3)dx_3dx_2 +\int_{-1}^0\int_{-1}^{x_2+1}(\frac{1}{2}x_3^2-x_3)dx_3dx_2 \\
    =&\int_0^1(-\frac{1}{3}-\frac{1}{6}(x_2-1)^3+\frac{1}{2}(x_2-1)^2)dx_2 
    +\int_{-1}^0(\frac{1}{6}(x_2+1)^3-\frac{1}{2}(x_2+1)^2+\frac{2}{3})dx_2     \\
    =&-\frac{1}{8}+\frac{13}{24}   
    =\frac{5}{12}.
\end{align*}
Similarly, one has
\begin{align*}
    \Vol(P)\Bc(P)_2
    =&\int_P x_2dx_1dx_3dx_2 \\
    =&\int_{x_2}\int_{x_3}\int_{-1}^{1-x_3}x_2 dx_1dx_3dx_2 \\
    =&\int_0^1\int_{x_2-1}^1(x_2(2-x_3))dx_3dx_2 +\int_{-1}^0\int_{-1}^{x_2+1}(x_2(2-x_3))dx_3dx_2 \\
    =&\frac{9}{8}-\frac{37}{24}
    =-\frac{5}{12}
\end{align*}
and
\begin{align*}
    \Vol(P)\Bc(P)_3
     =&\int_P x_3dx_1dx_3dx_2 \\
    =&\int_{x_2}\int_{x_3}\int_{-1}^{1-x_3}x_3 dx_1dx_3dx_2 \\
    =&\int_0^1\int_{x_2-1}^1(x_3(2-x_3))dx_3dx_2 +\int_{-1}^0\int_{-1}^{x_2+1}(x_3(2-x_3))dx_3dx_2 \\
    =&\frac{1}{4}-\frac{13}{12}
    =-\frac{5}{6}.
\end{align*}
Since $\Vol(P)=\frac{1}{3!}(-K_X)^3=6$, then 
\begin{align*}
    \Bc(P)=(\frac{5}{72},-\frac{5}{72},-\frac{5}{36})\neq0,
\end{align*}
showing directly that $X$ does not admit a K\"ahler--Einstein
metric (and is K-unstable).

Its automorphism group is reductive with the connected component containing the identity being $\mathrm{GL}(2, \CC)\times\mathbb{G}_m$  \cite[p. 273, Table 6.1]{ACC23}, where $\mathbb{G}_m$ the multiplicative group $(\CC^*,\,\cdot\,)$.
Thus, its associated polytope $P$ satisfies 
$R(P)=-R(P)$ by 
Remark \ref{existence of KE} and
Lemma \ref{lattice-centrally-symm-reductive}.
In fact, we can compute $R(P)$ directly, see Figure \ref{Fano 3fold No5.2}.
In addition,
one has $\alpha_G(X)=1/2$.  
\end{enumerate}

\section{Demazure's Structure Theorem}\label{DemazureTheo}

\subsection{Notation and preparation}

	Given a complete toric variety $X$ ($X$ being complete means $X$ is proper, and is equivalent to the support of $\Delta$ filling the entire space $N_\RR$, see Definition \ref{Property}), we denote the Weil divisor class group of $X$ (or equivalently, degree $1$ part of Chow ring of $X$) by $\clx$:

    \begin{definition}\label{clx}
        $\mathrm{Div} X$ is the free abelian group generated by all the prime Weil divisors on $X$. The principal divisors form a subgroup $\mathrm{Div}_0 X$ of $\mathrm{Div} X$, and the \emph{Weil divisor class group} of $X$ is defined to be the quotient group
        \beq\label{clxeq}\clx:=\mathrm{Div} X/\mathrm{Div}_0 X.\eeq
    \end{definition}

    If $X$ is smooth, $\clx$ is isomorphic to the Picard group of $X$. But for simplicial cases, $\clx$ is different from the Picard group. To be specific, if $X$ is simplicial, then every Weil divisor has an integer multiple that is Cartier, so the Picard group is a subgroup of $\clx$ with finite index. For example, the divisor group of the weighted projective space $\mathbb{P}(1,1,2)$ is isomorphic to $\mathbb{Z}$, and the picard group corresponds to the subgroup $x\mathbb{Z}$. For readers interested in the details, we refer to Cox's textbook \cite[\S 4.0]{CLS11}.

	Denoting the set of all primitive vectors of $1$-dimensional cones in $\Delta$ by $\Delta_1$ \eqref{Delta1Eq}. Every vector $v\in\Delta_1$ corresponds to a prime divisor $D_v$ in $X$. We denote by 
    \[\mbc^{\Delta_1}:=\oplus_{v\in\Delta_1}\mbc v\] 
    the complex linear space spanned by the independent basis $\Delta_1$, and by \[(\mbc^*)^{\Delta_1}:=\oplus_{v\in\Delta_1}\mbc^* v\]
    the maximal torus in it.
    
    \begin{definition}\label{HomoRing}
        The \emph{homogeneous coordinate ring }
        \beq S:=\mbc[x_v]\eeq 
        of $X$ is a graded polynomial ring over $\mbc$ generated by the variables $\{x_v|v\in\Delta_1\}$. Every monomial $\prod x_v^{a_v}$ in $S$ corresponds to a divisor $D=\sum a_vD_v$, thus a divisor class in $\clx$. In this way, we  grade $S$ by $\clx$.
    \end{definition}
    
     For each $v\in\Delta_1$, the corresponding divisor is denoted by $D_v$. 

    We divide $\{x_v|v\in\Delta_1\}$ into \beq\label{DeltaAlphaEq}\bigcup_{\alpha\in\clx} \Delta_\alpha=\bigcup_{\alpha\in\clx}\{x_v\,:\,(D_v)\in\alpha\},\eeq
    and define $\mathcal{C}$ to be the set of the divisor classes that contains some $x_v$, namely \beq\label{LinearClassEq}
    \mathcal{C}:=\{\alpha\in\clx\,:\,\Delta_\alpha\neq\emptyset\}.
    \eeq
	
	Now applying the exact contravariance functor $\mathrm{Hom}_\mbz(\cdot, \mbc^\ast)$ to the exact sequence
	\[\xymatrix{0 \ar[r] & M \ar[r] & \bigoplus_v\mbz D_v \ar[r] & \clx \ar[r] & 0},\]
	we get an exact sequence: 
	\beq\label{G-ExactSeq}\xymatrix{1 \ar[r] & G \ar[r] & (\mbc^\ast)^{\Delta_1} \ar[r] & T'_X \ar[r] & 1},
    \eeq
	where 
    \beq\label{GEq} G:=\mathrm{Hom}_\mbz(\clx, \mbc^\ast)
    \eeq 
    and 
    \beq\label{Torusac}T'_X:= \mathrm{Hom}_\mbz(M, \mbc^\ast).\eeq 
    We will see that $T'_X$ corresponds to the action of the maximal torus in $X$, denoted by $T_X$, on $X$.
	
	For each cone $\sigma\in \Delta$, let $\sigma_1$ denote the set of all primitive generators of the dimension-$1$ faces of $\sigma$ (similar to \eqref{Delta1Eq}), and $x^{\hat{\sigma}}$ be the monomial
    \[x^{\hat{\sigma}}:=\prod_{v\in\Delta_1\setminus\sigma_1}x_v.\]
	We define an ideal $B\in S$ (the ``irrelevant ideal'')
	\[B:=\langle x^{\hat{\sigma}}|\sigma\in\Delta\rangle.\]
	As shown in the exact sequence above, $G$ acts on the affine space $\mbc^{\Delta_1}$ as a subgroup of $(\mbc^\ast)^{\Delta_1}$, leaving $V(B)$ (the subvariety defined by the ideal $B$) invariant.
	We have a quotient construction of $X$:
	\begin{theorem}[{\cite[\S 5.2]{CLS11}}]
    \label{quotient-construction} Let $X$ be a toric variety determined by a fan $\Delta$, $G$ be the group in \eqref{GEq}. We denote $\mbc^{\Delta_1}\setminus V(B)$ by $\tx$. Then:
    \begin{enumerate}[$(\rm i)$]
        \item $V(B)$ is invariant under the action of $G$.
        \item $X$ is the categorical quotient $\tx//G$.
        \item $X$ is simplicial (see Theorem \ref{Property}) if and only if it is the geometric quotient $\tx/G$.
    \end{enumerate}

	\end{theorem}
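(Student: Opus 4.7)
The plan is to prove the three parts in order, with the bulk of the work lying in part (ii). For part (i), I would argue directly from the definition of the $G$-action. Since $G \subset (\mbc^*)^{\Delta_1}$ acts diagonally on $\mbc^{\Delta_1}$, scaling each coordinate $x_v$ by a character of $\clx$, it sends the vanishing locus of any monomial to itself. In particular, each $V(x^{\hat\sigma})$ is $G$-invariant, and hence so is $V(B) = \bigcap_{\sigma\in\Delta} V(x^{\hat\sigma})$.

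For part (ii), the plan is to build the quotient patch-by-patch over the fan. For each cone $\sigma \in \Delta$, set $\tx_\sigma := \{x \in \mbc^{\Delta_1} \,:\, x^{\hat\sigma} \neq 0\}$; these are $G$-invariant open affines whose union is $\tx$. The key computation is to identify the $G$-invariants of the coordinate ring $\mbc[\tx_\sigma]$: writing the $G$-action explicitly as $g \cdot (x_v)_v = (g([D_v])\, x_v)_v$, a Laurent monomial $\prod_v x_v^{a_v}$ is $G$-invariant precisely when $\sum_v a_v [D_v] = 0$ in $\clx$, which by the exact sequence
\[
0 \to M \to \bigoplus_v \mbz D_v \to \clx \to 0
\]
means $(a_v)_v$ comes from some $m \in M$ via $a_v = \langle m, v \rangle$. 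Requiring $x^{\hat\sigma}$ to be inverted forces $\langle m, v \rangle \geq 0$ for all $v \in \sigma_1$, so the ring of invariants is exactly $\mbc[\sigma^\vee \cap M]$, giving the affine toric chart $U_\sigma$ of $X$. I would then check that these quotients glue compatibly along $\tx_{\sigma_1} \cap \tx_{\sigma_2} = \tx_{\sigma_1 \cap \sigma_2}$, and deduce that the resulting morphism $\tx \to X$ has the universal property of the categorical quotient by reducing to the affine case, where it coincides with $\spec$ of the invariants.

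For part (iii), the strategy is to translate simpliciality of $\Delta$ into closedness of orbits and finiteness of stabilizers for the $G$-action on $\tx$. When $\sigma$ is simplicial, its primitive generators are $\RR$-linearly independent, from which I would deduce that the $G$-stabilizers in $\tx_\sigma$ are finite and all $G$-orbits are closed in $\tx_\sigma$; standard GIT then promotes the categorical quotient to a geometric one on each patch, and these glue to a geometric quotient $\tx/G \cong X$. Conversely, if $\Delta$ contains a non-simplicial cone, the nontrivial linear relations among its generators yield a one-parameter subgroup of $G$ whose orbit closure in $\tx_\sigma$ strictly contains other $G$-orbits, so distinct orbits cannot be separated and no geometric quotient can exist.

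The main obstacle I expect is the gluing and universality step in (ii): verifying not merely that the ring-of-invariants quotient exists patch by patch, but that the scheme so produced is actually isomorphic to the $X_\Delta$ built from the fan in the standard way, and that every $G$-invariant morphism out of $\tx$ factors uniquely through it. The delicate point is matching the two gluing prescriptions (the fan-theoretic one for $X_\Delta$ versus the quotient-theoretic one obtained from the $\tx_\sigma/G$) and keeping track of how the $G$-action interacts with the face relations among cones; this is the place where Cox's original argument contained the oversight later corrected in \cite{Cox14}, so particular care is warranted there.
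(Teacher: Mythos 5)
This theorem is not proved in the paper; it is stated with a citation to Cox--Little--Schenck \cite[\S 5.2]{CLS11} and used as a black box. Your proposal, however, is a sound sketch of the standard argument found there. The heart of the matter is the computation in (ii): on each patch $\tx_\sigma$ the $G$-invariant Laurent monomials $\prod_v x_v^{a_v}$ (with negative exponents allowed only off $\sigma_1$) are exactly those with $a_v=\langle m,v\rangle$ for some $m\in\sigma^\vee\cap M$, so the invariant ring is $\mbc[\sigma^\vee\cap M]$; you identify this correctly, and the gluing $\tx_{\sigma_1}\cap\tx_{\sigma_2}=\tx_{\sigma_1\cap\sigma_2}$ does match the fan-theoretic gluing of the $U_\sigma$.

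Two caveats on (iii). The forward direction is usually run as: simpliciality forces every $G$-stabilizer of a point of $\tx_\sigma$ to be finite, because for any $I\subseteq\sigma_1$ (the vanishing set of coordinates) the $\RR$-independence of the rays in $\sigma_1$ makes the classes $[D_v]$, $v\notin I$, span $\clx\otimes\QQ$; all orbits then have full dimension, hence are closed, and GIT upgrades the patchwise quotient to a geometric one. For the converse you need to choose the test points with some care: for a relation $\sum_{v\in\sigma_1}c_v v=0$ the \emph{generic} $\lambda_c$-orbit is in fact closed. The right choice is, say, the point with $x_v=0$ exactly when $c_v>0$ and the point with $x_v=0$ exactly when $c_v<0$ (and $x_v=1$ otherwise); both lie in $\tx_\sigma$, lie on distinct $G$-orbits, and have the same limit point (where all $x_v$ with $c_v\neq0$ vanish) in their orbit closures, so no geometric quotient exists. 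Finally, a small misattribution: the oversight corrected in \cite{Cox14} is \emph{not} in this quotient construction. As the paper's Remark \ref{simplifyremark} explains, the gap concerned the degree-preserving automorphism group of the Cox ring in the proof of Demazure's theorem; the quotient construction itself has been stable since Cox's 1995 paper.
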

    \begin{remark}
        Roughly speaking, the categorical quotient means that every $G$ invariant morphism from $\tx/G$ to another scheme can descend to $X$. The geometric quotient means that there is a natural one-to-one correspondence between the points in $X$ and the $G$-orbits in $\tx$. For precise definitions, we refer to \rm{\cite[\S 5.0]{CLS11}}.
    \end{remark}

    We now show that $B$ is actually independent of the choice of generators of $S$. To be precise, we have the following.
    
    \begin{lemma}\label{IrreIdeal}
        If $X$ is complete and $\phi: S\to S$ is a degree-preserving automorphism of $S$, then $\phi(B)=B$.
    \end{lemma}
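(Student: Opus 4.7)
The plan is to reduce $\phi(B)=B$ to a statement about the closed subvariety $V(B)\subseteq\mbc^{\Delta_1}$ and then use the $G$-equivariance of $\phi$ together with an intrinsic characterization of $V(B)$ forced by completeness of $X$.

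First I would observe that each generator $x^{\hat\sigma}=\prod_{v\notin\sigma_1}x_v$ is a product of distinct variables, so $B$ is a squarefree monomial ideal and hence radical. Consequently, $\phi(B)=B$ is equivalent to $\tilde\phi(V(B))=V(B)$, where $\tilde\phi\colon\mbc^{\Delta_1}\to\mbc^{\Delta_1}$ is the automorphism of $\mathrm{Spec}\,S$ dual to $\phi$. Second, since $G=\mathrm{Hom}_\ZZ(\clx,\mbc^\ast)$ acts on the coordinate $x_v$ by the character $[D_v]\in\clx$, the hypothesis that $\phi$ preserves each graded piece $S_\alpha$ is equivalent to $\tilde\phi$ commuting with the $G$-action of \eqref{G-ExactSeq}. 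Thus the problem reduces to showing that any $G$-equivariant automorphism of $\mbc^{\Delta_1}$ fixes $V(B)$ set-theoretically.

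The main step uses completeness. By Theorem \ref{quotient-construction}, $X=\tx/\!\!/G$ is a categorical quotient with $\tx=\mbc^{\Delta_1}\setminus V(B)$. The relevant structural fact is that $V(B)$ decomposes as $V(B)=\bigcup_{\mathcal{P}}L_{\mathcal{P}}$, where $\mathcal{P}$ runs over the \emph{primitive collections} of $\Delta$ (subsets of $\Delta_1$ not contained in $\sigma_1$ for any $\sigma\in\Delta$, but whose proper subsets are) and $L_{\mathcal{P}}=\{p\in\mbc^{\Delta_1}:p_v=0\text{ for all }v\in\mathcal{P}\}$ is a $G$-invariant linear subspace. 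Each $L_{\mathcal{P}}$ is characterized intrinsically by the existence of a one-parameter subgroup of $G$ (built from the class $\sum_{v\in\mathcal{P}}[D_v]\in\clx$) that contracts $L_{\mathcal{P}}$ to the origin; completeness of $\Delta$ is what rules out this contraction for points of $\tx$. Since $\tilde\phi$ is $G$-equivariant and fixes the origin, it preserves this contraction property, so $\tilde\phi(V(B))\subseteq V(B)$; running the argument for $\phi^{-1}$ yields the opposite inclusion, and hence $\phi(B)=B$.

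The hard part is making the intrinsic characterization of $V(B)$ precise without circularity. In the projective (or simplicial) case this follows from standard GIT: $V(B)$ is the unstable locus for the linearization of an ample $\clx$-class, controlled by Hilbert--Mumford. In the general complete but non-projective case, one either reduces to a projective refinement of $\Delta$, or argues directly from the combinatorial decomposition above, noting that only completeness (not projectivity) of $\Delta$ is needed for the key ``contraction-to-origin'' property used to separate $V(B)$ from $\tx$. This is exactly the place where the hypothesis on $X$ enters decisively, as illustrated by the $\mbc^2$ counterexample in the introductory remark.
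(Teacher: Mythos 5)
Your overall reduction is clean and, in its first two paragraphs, correct: $B$ is a squarefree monomial ideal, hence radical, so $\phi(B)=B$ is equivalent to $\tilde\phi(V(B))=V(B)$; and degree-preservation of $\phi$ is indeed the same as $G$-equivariance of $\tilde\phi$. The decomposition $V(B)=\bigcup_{\mathcal{P}}L_{\mathcal{P}}$ over primitive collections is also right. This is a genuinely different route from the paper, whose proof is a direct monomial manipulation: one reduces to replacing a single $x_{v_0}$ appearing in $x^{\hat\sigma}$ by a monomial $\prod x_v^{a_v}$ of the same $\clx$-degree and then exhibits a cone $\sigma'\in\Delta$ (constructed from the character $m$ with $(m)=\sum a_vD_v-D_{v_0}$ and from completeness) whose generator $x^{\hat{\sigma'}}$ divides the new monomial. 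That argument is elementary and requires only completeness of $\Delta$, with no geometry of $V(B)$ at all.

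The gap in your version is the ``intrinsic characterization'' of $V(B)$ via contraction to the origin, which is false as stated. Take $X=\PP^n$: then $G\cong\mbc^*$ is the diagonal torus acting on $\mbc^{n+1}$ by scaling, $V(B)=\{0\}$, and $\tx=\mbc^{n+1}\setminus\{0\}$. The one-parameter subgroup $\lambda(t)=t\cdot\mathrm{id}$ lies in $G$ and satisfies $\lim_{t\to0}\lambda(t)\cdot p=0$ for \emph{every} $p\in\mbc^{n+1}$, so the ``exists a 1PS of $G$ contracting $p$ to $0$'' locus is all of $\mbc^{n+1}$, not $V(B)$. What you are reaching for is the Hilbert--Mumford unstable locus, but that requires a \emph{linearization} (a character of $G$, i.e.\ a class in $\clx$, against which to weigh the 1PS), and then the correct statement is about the sign of the pairing, not bare limits. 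Choosing such a linearization amounts to choosing an ample class, so this version of the argument would impose projectivity on $X$, whereas the lemma is stated for complete $X$; moreover, once you introduce a linearization, the statement $V(B)=$ unstable locus becomes a nontrivial assertion on its own (and for a non-simplicial fan $\Delta$ the quotient is only categorical, not geometric, which further complicates it). Your suggested fallback of passing to a projective refinement of $\Delta$ does not obviously help either: refining $\Delta$ adds rays and hence changes $\Delta_1$, $S$, and $B$, so it does not directly yield the desired statement for the original ring. In short, the reduction is fine, but the key step is unproved and the proposed justification has a counterexample; the paper's combinatorial argument sidesteps all of this.
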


    \begin{proof}
        We need to prove that for any maximal cone $\sigma\in \Delta$, $\phi(x^{\hat{\sigma}})\in B$. By simple induction, it suffices to show that if $v_0\in \Delta_1\setminus \sigma_1$, and $\prod x_{v}^{a_v}$ is a monomial of the same degree as $x_{v_0}$, then $x^{\hat{\sigma}}\prod x_v^{a_v}/x_{v_0}\in B$.

        Now, since the divisor $\sum a_vD_v-D_{v_0}$ is a principal divisor, there is a character $m\in M$ such that $(m)=\sum a_vD_v-D_{v_0}$. This means $\langle m, v_0\rangle=-1$ and $a_v=\langle m, v\rangle\geqslant 0$ for $v\neq v_0$. Hence, the hyperplane $H$ perpendicular to $m$ divided $N\otimes\mathbb{R}$ into two parts, and one part contains only one vector in $\Delta_1$, namely $v_0$. By definition, the variable $x_v$ appears in $x^{\hat{\sigma}}\prod x_v^{a_v}/x_{v_0}$ if and only if $v\neq v_0$ and $v\not\in H\cap\sigma$. 
        
        Since $H\cap \sigma$ is a face of $\sigma$ and $\Delta$ is complete (see Definition \ref{Property}), if we choose a ray $l$ in the relative interior of $H\cap \sigma$ (if $H\cap \sigma$ is a ray we just choose itself), then for any point $p$ in the relative interior of the cone spaned by the ray generated by $v_0$ and $l$, there is a cone $\sigma'\in\Delta$ contains $p$. But the ray generated by $v_0$ is the only dimension-$1$ cone of $\Delta$ on the same side of $x$, $\sigma'$ must contain $v_0$ as a face. For the same reason, $\sigma'$ must contain $l$. By the definition of fan, $\sigma'\cap H\cap \sigma$ is a face of both $H\cap \sigma$ and $\sigma'$, so $\sigma'\cap H\cap \sigma=H\cap \sigma$. Hence, $H\cap \sigma$ is a face of $\sigma'$. Thus, $x^{\hat{\sigma'}}$ is a factor of $x^{\hat{\sigma}}\prod x_v^{a_v}/x_{v_0}$, which implies what we want.
    \end{proof}   

    
    \begin{remark}\label{cod2argum}
        Since for every varieble $x_v$ (see Definition \ref{HomoRing}), there exists a $\sigma\in\Delta$ such that the monomial $x^{\hat{\sigma}}$ does not contain $x_v$, $V(B)$ has codimension at least $2$. This fact will be used several times in the proof below.
    \end{remark}

        \subsection{Statement and key idea of proof}
    
	The main task of the next section is to analyze the structure of the automorphism group $\Aut X$ of $X$. We will eventually establish the following results.
	\begin{theorem}[Demazure Theorem]\label{Demazure's structure theorem}
		Let $X$ be a complete simplicial toric variety of dimension $n$ (see Theorem \ref{Property}), $\Delta$ the corresponding fan and $P$ the corresponding polytope. Then its automorphism group $\Aut X$ is a Lie group that satisfies:
	\begin{enumerate}[$(\rm i)$]
		\item The connected component of $\Aut X$ that contains the identity, denoted by $\Aut_0 X$, is a linear algebraic group (see Definition \ref{reductivegp}) with $T'_X\cong (\mbc^*)^n$ (see \eqref{Torusac}) as a maximal torus. 
		
		\item  $\Aut_0X$ is the semi-direct product of its unipotent radical (see Definition \ref{reductivegp}) $R_u$ with a group $G_s/G$, where $G_s$ is a reductive group and $G$ is given by \eqref{GEq}. Furthermore, $R_u$ is generated by some one-parameter groups $\phi^{m,\lambda}$ \eqref{rootsAut} associated with the roots $m\in R_u(P)$ (Definition \ref{roots}), and $G_s\cong \prod_{\alpha\in\mathcal{C}}\mathrm{GL}(|\Delta_\alpha|,\mbc)$ \eqref{DeltaAlphaEq} contains $(\mbc^*)^{\Delta_1}$ as a maximal torus (namely, it is the group consisting of all diagonal matrices). The exact sequence \eqref{G-ExactSeq} shows how $G$ is embedded in $G_s$.
		
		\item  $\Aut \Delta$ (see \eqref{AutDeltaEq}) can be naturally embedded in $\Aut X$. Then we have $\Aut X/\Aut_0X\cong \Aut \Delta/(\Aut\Delta\cap \Aut_0 X)$. Furthermore, if $X$ is smooth and Fano, then $\Aut X/\Aut_0X\cong \Aut P/\Aut_0 P$ (see Definition \ref{Aut0P}) and $\Aut_0 P$ is isomorphic to the Weyl group of $G_s/G$ with respect to the maximal torus $T'_X$ (see Definition \ref{Weylgp}).
        \end{enumerate}
	\end{theorem}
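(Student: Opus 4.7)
The plan is to follow Cox's strategy via the homogeneous coordinate ring: translate $\Aut X$ into the group of graded automorphisms of $S$ using the geometric quotient $X=\widetilde X/G$ (Theorem \ref{quotient-construction}), analyze this group combinatorially, and then quotient by $G$ to pass back. Write $\Aut_g S$ for the group of $\mathbb{C}$-algebra automorphisms of $S$ preserving the $\clx$-grading.

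The first step is to set up a short exact sequence $1\to G\to \Aut_g S\to \Aut_0 X\to 1$. Given $f\in\Aut_0 X$, I would lift to a $G$-equivariant automorphism $\tilde f$ of $\widetilde X$ (possible because $\widetilde X\to X$ is a geometric $G$-quotient in the simplicial case) and then extend across $V(B)$, which has codimension at least $2$ by Remark \ref{cod2argum}, via Hartogs to obtain a $\mathbb{C}$-algebra automorphism of $S$; this must preserve the grading because $G$ encodes it through the torsor structure. Conversely, Lemma \ref{IrreIdeal} ensures any $\phi\in\Aut_g S$ preserves the irrelevant ideal $B$, hence $\widetilde X$, and descends to an automorphism of $X$. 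This immediately gives $\Aut_0 X$ the structure of a linear algebraic group and identifies the image of $(\mathbb{C}^{\ast})^{\Delta_1}$ with a maximal torus $T'_X$ via the sequence \eqref{G-ExactSeq}.

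The core of the proof is the structural description of $\Aut_g S$. I would first compute the monomial basis of the graded piece $S_{\alpha_v}$ (where $\alpha_v:=[D_v]$): a monomial $\prod x_{v'}^{a_{v'}}$ of degree $\alpha_v$ corresponds to a character $m\in M$ with $\langle m,v'\rangle=a_{v'}\ge 0$ for $v'\neq v$ and $\langle m,v\rangle=a_v-1\ge -1$; completeness of $\Delta$ forces the subcase $\langle m,v\rangle\ge 0$ to be $m=0$, so the basis of $S_{\alpha_v}$ consists precisely of the variables $x_{v'}$ for $v'\in\Delta_{\alpha_v}$ together with root-monomials $x^m:=\prod_{v'\neq v}x_{v'}^{\langle m,v'\rangle}$ for $m\in R(P)$ with $v_m=v$ (Definition \ref{roots}). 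Let $G_s\subset\Aut_g S$ be the subgroup preserving each subspace $\mathrm{span}_{\mathbb{C}}\Delta_{\alpha}\subset S_\alpha$; this gives $G_s\cong\prod_{\alpha\in\mathcal C}\mathrm{GL}(|\Delta_\alpha|,\mathbb{C})$ with maximal torus $(\mathbb{C}^{\ast})^{\Delta_1}$, and let $U\subset\Aut_g S$ be generated by the root automorphisms $\phi^{m,\lambda}$ sending $x_{v_m}\mapsto x_{v_m}+\lambda x^m$ and fixing the other variables. By a careful induction—using the $\phi^{m,\lambda}$ successively to kill the root-monomial contributions in $\phi(x_v)$, then diagonalizing by $G_s$—one shows $\Aut_g S=U\rtimes G_s$. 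The root automorphism $\phi^{m,\lambda}$ is genuinely unipotent exactly when $-m\notin R(P)$, i.e., $m\in R_u(P)$ (for $m\in R_s(P)$ the pair $\phi^{\pm m,\cdot}$ generates an $\mathrm{SL}_2$ that is absorbed into $G_s$), so the unipotent radical $R_u$ is generated by $\{\phi^{m,\lambda}:m\in R_u(P)\}$. Since $G\subset(\mathbb{C}^{\ast})^{\Delta_1}$ is central in $G_s$ and intersects $U$ trivially, quotienting by $G$ yields $\Aut_0 X=R_u\rtimes(G_s/G)$ as stated in (i) and (ii).

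For part (iii), any $\mathcal P\in\Aut\Delta$ permutes $\Delta_1$ and induces an algebra automorphism of $S$ that is graded up to the $\clx$-automorphism induced by $\mathcal P$; this gives an embedding $\Aut\Delta\hookrightarrow\Aut X$. To identify $\Aut X/\Aut_0 X$, I would show via Theorem \ref{Torustheo} that every element of $\Aut X$ is conjugate under $\Aut_0 X$ to one normalizing $T'_X$, and that such normalizers necessarily permute the torus-invariant divisors, hence come from elements of $\Aut\Delta$; this gives $\Aut X/\Aut_0 X\cong \Aut\Delta/(\Aut\Delta\cap\Aut_0 X)$. In the smooth Fano case, Lemma \ref{DualIso} gives $\Aut\Delta\cong\Aut P$; the intersection $\Aut\Delta\cap\Aut_0 X$ then corresponds to lattice automorphisms inducing the identity on $\clx$, which I would verify matches $\Aut_0 P$ of Definition \ref{Aut0P} using the characterization of principal divisors, and which in turn sits inside $G_s/G$ as the group permuting the $\Delta_\alpha$ by permutation matrices—the Weyl group of $G_s/G$ with respect to $T'_X$ (cf.\ Example \ref{GLWeyl}). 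The main obstacle is the inductive decomposition $\Aut_g S=U\rtimes G_s$: one must choose a partial order on $R(P)$ (or on the $\clx$-degree classes in $\mathcal C$) so that successive applications of $\phi^{m,\lambda}$ strictly reduce some controlled weight, and verify the decomposition is well-defined independent of choices. This is precisely the step where Cox's 1995 argument contained the oversight corrected in his 2014 erratum, confirming the subtlety of the theorem.
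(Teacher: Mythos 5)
Your overall route is the same as the paper's: pass to Cox's homogeneous coordinate ring, realize the graded automorphisms as a matrix group via their action on the finite-dimensional spaces $\oplus_{\alpha\in\mathcal C}S_\alpha$, read off $R_u$ and $G_s$, and finally descend through $G$. Two places need substantive repair.

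First and most importantly, the lifting step is a genuine gap. You assert that any $f\in\Aut_0X$ lifts to a $G$-equivariant automorphism of $\tx$ ``because $\tx\to X$ is a geometric quotient.'' That is not a valid implication: a geometric quotient is not a $G$-torsor (the $G$-action has non-trivial stabilizers along the toric boundary), and there is no abstract reason a downstairs automorphism should lift. The paper's Proposition \ref{AutClx} does the real work: given $\phi\in\Aut X$ \emph{preserving divisor classes}, the divisor $\phi^*(D_v)-D_v$ is principal, hence defined by a rational function $u_v$, and one sets $\widetilde\phi_v:=u_vx_v$; then one adjusts by an element of $(\mbc^*)^{\Delta_1}$ using the exact sequence \eqref{G-ExactSeq} so that $\widetilde\phi$ induces exactly $\phi$ on characters. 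This construction is what yields the exact sequence $1\to G\to\widetilde{\Aut}_0X\to\Aut X\to\Aut\clx$, and the surjectivity onto $\Aut_0X$ then follows from connectedness of $\widetilde{\Aut}_0X$ together with discreteness of $\Aut\clx$ (Corollary \ref{connectedness}). Your sketch omits both the explicit construction and the divisor-class/connectedness reduction, so as written there is no justification for the central exact sequence.

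Second, the decomposition $\Aut_gS=U\rtimes G_s$ is incorrectly stated if $U$ is generated by $\phi^{m,\lambda}$ for \emph{all} $m\in R(P)$: for $m\in R_s(P)$ one has $x^m=x_{v'}$ for some $v'\in\Delta_1$, so $\phi^{m,\lambda}$ is a linear automorphism already lying in $G_s\cong\prod_{\alpha}\mathrm{GL}(|\Delta_\alpha|,\mbc)$, hence $U\cap G_s\neq\{1\}$ and the product is not semi-direct. The correct statement, which you do recover a sentence later, is $\Aut_gS=R_u\rtimes G_s$ where $R_u$ is generated only by $\phi^{m,\lambda}$ with $m\in R_u(P)$. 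Related to this, ``genuinely unipotent exactly when $m\in R_u(P)$'' is a misleading formulation: all $\phi^{m,\lambda}$ act unipotently on $\oplus_\alpha S_\alpha$ (this is exactly the block form \eqref{matrixform} with identity on the diagonal of $C_\alpha$); the dichotomy $R_s(P)$ vs.\ $R_u(P)$ is not unipotence of the element but membership in $G_s$ versus $R_u$. The remainder of your outline — the monomial basis computation giving Lemma \ref{root-correspondence}, conjugating an arbitrary $\phi\in\Aut X$ into the normalizer of $T'_X$ via Theorem \ref{Torustheo}, and identifying $\Aut\Delta\cap\Aut_0X$ with the Weyl group and with $\Aut_0P$ — matches the paper's Steps 3 and 5.
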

    
	 As a corollary, we will get Lemma \ref{lattice-centrally-symm-reductive}.

     \begin{proof}[Proof of Lemma \ref{lattice-centrally-symm-reductive}]\label{SymEqualRed}
     From part (ii) of Theorem \ref{Demazure's structure theorem}, the unipotent radical of $\Aut_0X$ is generated by some one-parameter groups associated with the roots in $R_u(P)$. But $R(P)=-R(P)$ implies $R_u(P)=\emptyset$, so the unipotent radical is trivial, and $\Aut_0X$ is reductive.
         
     \end{proof}
    
	

	We now try to explain the basic idea of Cox's proof of Theorem \ref{Demazure's structure theorem}:

    \begin{enumerate}
        \item Since $\mbc^{\Delta_1}\setminus V(B)$ is simply an affine space minus some subvarieties cut out by coordinate functions, it is much easier to analyze its automorphism group. Therefore, we concentrate on the group of automorphisms of $\mbc^{\Delta_1}\setminus V(B)$ that can be descended to $X$, whose identity component turns out to consist of automorphisms formed by some homogeneous polynomials with respect to the $\clx$-grading. 
        \item Observing that these homogeneous polynomials are determined by the degree-preserving linear endomorphism of $S$, we can write them as matrices. This shows that they form a linear algebraic group and allows us to figure out the precise structure of this group.
        \item To understand how the automorphism group $\Aut X$ is related to the combinatorial information of $X$, we study the relation between its unipotent radical $R_u$ and the roots $R(P)$ of $X$.
        \item The next step is to describe the relation of the groups described above and $\Aut X$. Just as our intuition tells us, $\Aut X$ is the quotient group $\widetilde{\Aut} X/G$, but the proof requires effort. We first lift the automorphisms in the indentity component of $\Aut X$ to $\widetilde{\Aut}_0 X$ (see Definition \ref{aut0}).
        \item Finally, we show how the elements in $\Aut P$ "generate" other components of $\Aut X$.
    \end{enumerate}

    \begin{remark}
    \lb{simplifyremark}
            In the next section, we give a proof of Demazure's theorem following the paper of Cox. We modify the proof from Cox's original paper slightly to avoid some arguments.        
            We do not introduce the isomorphism between $\widetilde{\Aut}_0 X$ and the degree-preserving automorphism group of the homogeneous coordinate ring $S$. Instead, we simply view this as a representation over a linear subspace of $S$. However, the essential argument used in proving the isomorphism is already shown in the proof of Lemma \ref{IrreIdeal}.
            Some other places are simplified slightly. For example, the connectedness argument is used in the proof of Proposition \ref{AutClx} and the following corollary to replace some arguments in the original proof.

            Cox's original proof has a gap: he assumed that the automorphism group of a ring is an algebra, which is wrong, so that he can run some arguments more easily. The gap is, of course, a little serious, but it does not affect his general idea at all. And also it was pointed out very clearly and fixed very well in his erratum. The idea we explained above is the fixed one.
    \end{remark}

\subsection{Examples of toric Del Pezzos}

    Let us look at the examples of toric Del Pezzos. They give us some intuition on how to prove the theorem.

    \begin{table}[ht]
\centering
    
        \begin{tabular}{|c|c|c|c|}
    \hline
        $X$ & $\PP^2$ & $\PP^1\times\PP^1$ &  $\PP^2$ blown up $3$ points \\
        \hline
        $S$ & $\mbc[x_0,x_1,x_2]$ & $\mbc[x_0,x_1,y_0,y_1]$ & $\mbc[x_0,x_1,x_2,y_1,y_2,y_3]$ \\
        \hline
        $G$ & $\mbc^*$ & $(\mbc^*)^2$ &  $(\mbc^*)^4$ \\
        \hline
        $\tx$ & $\mbc^3\setminus\{0\}$ & $\mbc^4\setminus V(x_0,x_1)\cup V(y_0,y_1)$ & \makecell{$\mbc^3\setminus V((x_1y_1x_2y_2)\cap V(y_1x_2y_2x_3)$ \\ $\cap V(x_2y_2x_3y_3))\cap V(y_2x_3y_3x_1)$ \\ $\cap V(x_3y_3x_1y_1)\cap V(y_3x_1y_1x_2)$} \\
        \hline
        $\Aut X$ & $\mathrm{PGL}(3,\mbc)$ & \makecell{$(\mathrm{PGL}(2,\mbc)\times\mathrm{PGL}(2,\mbc))$\\$\rtimes\ZZ/2\ZZ$}  & $(\mbc^*)^2\rtimes D_{12}$ \\
         \hline
        $\Aut_0X$ & $\mathrm{PGL}(3,\mbc)$ & $\mathrm{PGL}(2,\mbc)\times\mathrm{PGL}(2,\mbc)$ & $(\mbc^*)^2$ \\
         \hline
        $\Aut P$ & $S_3$ & $D_8$ & $D_{12}$ \\
         \hline
        $\Aut_0 P$ & $S_3$ & $\ZZ/2\ZZ\times\ZZ/2\ZZ$ & $0$ \\
         \hline
        $R_u$ & $0$ & $0$ & $0$ \\
         \hline
        $G_s$ & $\mathrm{GL}(3,\mbc)$ & $\mathrm{GL}(2,\mbc)\times\mathrm{GL}(2,\mbc)$ & $(\mbc^*)^6$ \\
         \hline

    \end{tabular}
 \caption{\small K-polystable Toric Del Pezzo surfaces. In these cases $R_s(P)=R(P)$ and $R_u(P)=\emptyset$, so $\Aut_0 X$ is reductive. See Figure \ref{Fig for R(P)}. Here $V(f_1,...,f_i)$ means the subvariety defined by the ideal generated by $f_1,..., f_i$.}
\label{polyexamp}
\end{table}

\begin{table}[ht]
    \centering

        \begin{tabular}{|c|c|c|}
    \hline
        $X$ & $\PP^2$ blown up $1$ point & $\PP^2$ blown up $2$ points \\
        \hline
        $S$  & $\mbc[x_0,x_1',x_2',y]$ & $\mbc[x_0',x_1',x_2',y_1,y_2]$ \\
        \hline
        $G$ & $(\mbc^*)^2$ & $(\mbc^*)^3$  \\
        \hline
        $\tx$ & $\mbc^4\setminus V(x_0,y)\cup V(x_1',x_2')$ & \makecell{$\mbc^3\setminus V(x'_1,y_2)\cup V(x'_2,x'_0)$ \\$\cup V(y_2,y_1)\cup V(x'_0,x'_1)$ \\ $\cup V(y_1,x'_2)$} \\
        \hline
        $\Aut X$ & $\mbc^2\rtimes\mathrm{GL}(2,\mbc)$ & $(\mbc^2\rtimes(\mbc^*2))\rtimes\ZZ/2\ZZ$ \\
         \hline
        $\Aut_0X$ & $\mbc^2\rtimes\mathrm{GL}(2,\mbc)$ & $\mbc^2\rtimes(\mbc^*)^2$ \\
         \hline
        $\Aut P$ & $\ZZ/2\ZZ$ & $\ZZ/2\ZZ$ \\
         \hline
        $\Aut_0 P$ & $\ZZ/2\ZZ$ & $0$ \\
         \hline
        $R_u$ & $\mbc^2$ & $\mbc^2$ \\
         \hline
        $G_s$ & $\mathrm{GL}(3,\mbc)\times(\mbc^*)^2$ & $(\mbc^*)^5$ \\
         \hline
        $R_s(P)$ & $\{(1,-1),(-1,1)\}$ & $\emptyset$ \\
        \hline
        $R_u(P)$ & $\{(1,0),(0,1)\}$ & $\{(-1,0),(0,-1)\}$ \\
        \hline

    \end{tabular}
    \caption{\small K-unstable Toric Del Pezzo surfaces. See Figure \ref{Figure of blowup 1,2 points}. Here $V(f_1,...,f_i)$ means the subvariety defined by the ideal generated by $f_1,..., f_i$.}
\label{usexamp}
\end{table}

	\begin{example}
		The simplest but nontrivial example may be $\PP^2$. The homogeneous coordinate ring of it is $\mbc[x_0,x_1,x_2]$, and the automorphism group is $\mathrm{PGL}(3,\mbc)$, which is known to be reductive. According to Figure \ref{Fig for R(P)}, $\Aut P=S_3$, and the roots of $\PP^2$ is
		\[\{(0,1),(1,1),(1,0),(0,-1),(-1,-1),(-1,0) \},\]
		so the polytope of $\PP^2$ is centrally lattice symmetric, although the polytope itself is not centrally symmetric. 
		
		Easily seen from the Lie algebra of $\mathrm{PGL}(3,\mbc)$, $\mathrm{PGL}(3,\mbc)$ is generated by the maximal torus and the family below:
		\[\begin{pmatrix}1 & \lambda & 0 \\ 0 & 1 & 0 \\ 0 & 0 & 1\end{pmatrix},
		\begin{pmatrix}
			1 & 0 & \lambda \\ 0 & 1 & 0 \\ 0 & 0 & 1
		\end{pmatrix}, 
		\begin{pmatrix}
			1 & 0 & 0 \\ \lambda & 1 & 0 \\ 0 & 0 & 1
		\end{pmatrix}, 
		\begin{pmatrix}
			1 & 0 & 0 \\ 0 & 1 & \lambda \\ 0 & 0 & 1
		\end{pmatrix}, 
		\begin{pmatrix}
			1 & 0 & 0 \\ 0 & 1 & 0 \\ \lambda & 0 & 1
		\end{pmatrix}, 
		\begin{pmatrix}
			1 & 0 & 0 \\ 0 & 1 & 0 \\ 0 & \lambda & 1
		\end{pmatrix}. \]
		We will reveal the relation between them and the six roots of $\PP^2$ in the next section.
	\end{example}

	\begin{example}\label{ruled surface}
		To see how the $\clx$-grading works, we blow up a $0$-dimensional toric orbit, namely the intersection point $x$ of $D_1, D_2$, in $\PP^2$. Then we get an ruled surface $X$, the homogeneous coordinate ring of which is $\mbc[x_0,x'_1,x'_2,y]$. Here, $y$ corresponds to the exceptional divisor $E$, and $x'_i$ corresponds to the strict transform $D'_i$ of $D_i$. As a ruled surface, the divisor class group of $X$ is $\mbz\oplus\mbz$, where the first $\mbz$ is the classes of fibers ($D'_i$ are here), the latter $\mbz$ is the classes of sections ($E$ is here), and $D_0$ is contained in the divisor class $(1,1)$. 
		
		The automorphism group $\Aut X$ can be viewed a subgroup of $\mathrm{PGL}(3,\mbc)$, generated by the maximal torus and the family below:
		\[\begin{pmatrix}1 & \lambda & 0 \\ 0 & 1 & 0 \\ 0 & 0 & 1\end{pmatrix},
		\begin{pmatrix}
			1 & 0 & \lambda \\ 0 & 1 & 0 \\ 0 & 0 & 1
		\end{pmatrix}, 
		\begin{pmatrix}
			1 & 0 & 0 \\ 0 & 1 & \lambda \\ 0 & 0 & 1
		\end{pmatrix}, 
		\begin{pmatrix}
			1 & 0 & 0 \\ 0 & 1 & 0 \\ 0 & \lambda & 1
		\end{pmatrix}. \]
		This can be derived from the fact that the holomorphic vector fields on $X$ (in other words, a vector in the Lie algebra of $\Aut X$) correspond one to one to the holomorphic vector fields on $\PP^2$ that vanish at $x$. Equivalently, we can say that $\Aut X$ contains the elements in $\mathrm{PGL}(3,\mbc)$ that preserve the point $x=[1,0,0]$ (very roughly, this is quite reasonable since $E$ itself is the only prime divisor on $X$ that is linear equivalent to $E$, which forces every automorphism of $X$ preserves $E$). 
		
		Therefore, $\Aut X$ is not reductive. The first two one-parameter groups above generate the unipotent radical of $\Aut X$. The latter two one-parameter groups and the maximal torus in some sense generate the "reductive part" of $\Aut X$, which is isomorphic to $\mathrm{GL}(2,\mbc)$. 

        According to Figure \ref{Fig for R(P)}, $\Aut P=\Aut_0 P=\ZZ/2\ZZ$, which again shows that $\Aut X$ is connected in this case.

        See Remark \ref{ruled surface2} for further discussion.
	\end{example}
	
	\begin{example}\label{square}
		We briefly introduce another simple example, $\PP^1\times\PP^1$. In this case, the automorphism group is not connected, since, besides the identity component $\mathrm{PGL}(2,\mbc)\times\mathrm{PGL}(2,\mbc)$ (it is a reductive group generated by the maximal torus and four one-parameter groups corresponding to the roots), there is another special automorphism $P$ that exchanges the two $\PP^1$. This automorphism has a distinctive feature that makes it so different from the automorphisms appearing above: it induces a nontrivial map on the divisor class group. 

        We can see this phenomenon in $\Aut P$ directly: Here, $\Aut P$ is the dihedral group $D_8$, and $\Aut_0 P\cong \ZZ/2\ZZ\times\ZZ/2\ZZ$ is the subgroup generated by the up-down flip and the left-right flip. The quotient group is isomorphic to $\mbz/2\mbz$. Thus, $\Aut X\cong (\mathrm{PGL}(2,\mbc)\times\mathrm{PGL}(2,\mbc))\rtimes\ZZ/2\ZZ$. 

        In other toric Del Pezzos, $\Aut_0 P$ is equal to either the trivial group or $\Aut P$ itself. 
        \end{example}

        For other toric Del Pezzos, the automorphism group is listed here. 
        
        \begin{example}\label{pentagon}
        Suppose $X$ is $\PP^2$ blown-up two points. Then its homogeneous coordinate ring is $\mbc[x_0',x_1',x_2',y_1,y_2]$, where $y_i$ corresponds to the exceptional divisors and $x'_i$ are the strict transforms of the coordinate lines in $\PP^2$. 
        
        Similarly to Example \ref{ruled surface}, the identity component $\aut_0 X$ of the automorphism group of $\PP^2$ blown-up two points $[1,0,0],[0,1,0]$ is generated by the maximal torus $(\mbc^*)^2$ and the family: 
        \[\begin{pmatrix}
			1 & 0 & \lambda \\ 0 & 1 & 0 \\ 0 & 0 & 1
		\end{pmatrix}, 
		\begin{pmatrix}
			1 & 0 & 0 \\ 0 & 1 & \lambda \\ 0 & 0 & 1
	\end{pmatrix},\]
        since it must preserve the two points $[1,0,0],[0,1,0]$.

        The difference is that there is another component, the element of which exchanges the points $[1,0,0],[0,1,0]$. We choose a typical member of this:
        \[\begin{pmatrix}
			0 & 1 & 0 \\ 1 & 0 & 0 \\ 0 & 0 & 1
	\end{pmatrix}.\]
        This matrix generates a subgroup $\ZZ/2\ZZ$. This corresponds to $\Aut P$ according to Figure \ref{Figure of blowup 1,2 points}. Therefore, the automorphism group is a semi-direct product of $\ZZ/2\ZZ$ and $\aut_0X$.

        \end{example}
        
        \begin{example}
        The automorphism group of $\PP^2$ with three non-linear points blown up is isomorphic to the semi-direct product of $(\mbc^*)^2$ and the dihedral group $D_{12}$. Roughly speaking, the reason is as follows:

        The three exceptional divisors and the strict transforms of the lines connecting the three points form a regular hexagon: the vertices of the hexagon correspond to these six divisors, and there is an edge connecting two vertex if and only if the divisors corresponding to them intersect. The automorphism of $\PP^2$ with three nonlinear points blown up must preserve the set of these six divisors and their intersection relation. The subgroup of the automorphism group of $\PP^2$ that preserves the three blown up points is the torus $(\mbc^*)^2$ itself. The above discussion tells us that $\Aut X/(\mbc^*)^2$ can be identified as the automorphism group of the hexagon.

        In fact, there is a natural subgroup $D_{12}\cong \Aut P\sse\Aut X$ that contains the automorphisms induced by the isomorphisms of the polytope (see Figure \ref{Fig for R(P)}).

        \end{example}

	Keeping these examples in mind, we will figure out the structure of the automorphism groups of smooth toric varieties in the next section.

\section{Proof of Theorem \ref{Demazure's structure theorem}}\label{Proof of Demazure's Theorem}

	We go through the proof step by step. 
    
\subsection{Step \texorpdfstring{$1.$}{1.} Lifting automorphisms to \texorpdfstring{$\tx$}{tx}}

	According to the basic idea we mention in the last section, if $X$ is simplicial, it can be viewed as a geometric quotient of $\tx$ by $G=\mathrm{Hom}_\mbz(\clx, \mbc^\ast)$ (see Theorem \ref{quotient-construction}), and we shall first find the automorphisms of $\tx$ that can descend to $X$. Such automorphisms $\phi$ must send one $G$-orbit to another. To be specific, for any $x\in \tx$ and $g\in G$, $\phi$ should satisfy
	\[\phi(g\cdot x)\in G\cdot\phi(x), \]
    which means \[\phi^{-1} G\phi=G.\] 
	
	Since $V(B)$ has codimension at least two (Remark \ref{cod2argum}), every $\phi\in\Aut \tx$ can be extended to an endomorphism of $\mbc^{\Delta_1}$ by the Riemann extension theorem. Thus, an automorphism by the Zariski-Nagata purity theorem, which asserts that on a regular algebraic variety the branch locus of a morphism must consist of pure codimension-$1$ subvarieties (see \cite{Za58}), we get $\Aut \tx\sse\Aut \mbc^{\Delta_1}$. So we define:
	
	\begin{definition}
		$\widetilde{\Aut} X$ is the normalizer of $G$ in $\Aut \tx$.
	\end{definition}

    As shown above, $\widetilde{\Aut} X$ is exactly the group containing all the automorphisms of $\tx$ that can descend to $X$.

	Every element in $G$ descends to the identity of $X$ since it preserves all the $G$-orbits. In fact, every $\phi\in \widetilde{\Aut} X$ that descends to the identity of $X$ is contained in $G$:
	
	If $\phi\in \widetilde{\Aut} X$ descends to the identity of $X$, it preserves every $G$-orbit. Write each component of $\phi$ as $\phi_v$, we find that the polynomial $\phi_v$ cannot contain any variables except $x_v$, otherwise it would send some point outside the coordinate hyperplane $V(x_v)$ to $V(x_v)$, contradicting the fact that all coordinate hyperplanes are invariant under $G$. Furthermore, $\phi_v$ must be a non-zero linear function because different points cannot be sent to one point. Thus, $\phi\in(\mbc)^{\Delta_1}$. Now, the exact sequence \eqref{G-ExactSeq} \[\xymatrix{1 \ar[r] & G \ar[r] & (\mbc^\ast)^{\Delta_1} \ar[r] & T'_X \ar[r] & 1}\] and the assumption that $\phi$ descend to the identity of $X$ show $\phi\in G$.
	
	The proof of the fact that every automorphism of $X$ can be lifted to $\widetilde{\Aut} X$ is more difficult and requires more preparation, so we skip it temporarily and put it at the end of this section (see Proof \ref{proofKey}). Having admitted this, we have:
	
	\begin{proposition}\label{KeyExact}
		$\xymatrix{1 \ar[r] & G \ar[r] & \widetilde{\Aut} X \ar[r] & \Aut X \ar[r] & 1}$ is an exact sequence.
	\end{proposition}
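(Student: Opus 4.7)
The proposition has three parts to verify: injectivity of $G\hookrightarrow\widetilde{\Aut} X$, exactness at the middle term, and surjectivity of $\widetilde{\Aut} X\twoheadrightarrow\Aut X$. Two of these have essentially been handled already in the paragraphs preceding the statement. Injectivity is a tautology: via \eqref{G-ExactSeq}, $G$ sits inside $(\mbc^*)^{\Delta_1}\subseteq\Aut(\mbc^{\Delta_1})$; it preserves $V(B)$ since coordinate hyperplanes are torus-stable; and as every group normalizes itself, $G\subseteq\widetilde{\Aut} X$. Exactness at the middle is also already done in the prose: if $\phi\in\widetilde{\Aut} X$ descends to $\id_X$ then $\phi$ preserves every $G$-orbit, so each component $\phi_v$ must be a nonzero scalar multiple of $x_v$ alone (any other variable would move a point off the hyperplane $\{x_v=0\}$ onto it, violating $G$-invariance of that hyperplane), whence $\phi\in(\mbc^*)^{\Delta_1}$; descending to $\id_X$ combined with \eqref{G-ExactSeq} then forces $\phi\in G$.

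The substantive claim is surjectivity: every $\Psi\in\Aut X$ admits a lift $\widetilde\Psi\in\widetilde{\Aut} X$. The plan is to work through the Cox homogeneous coordinate ring $S$. Writing $S=\bigoplus_{\alpha\in\clx}S_\alpha$ and identifying $S_\alpha\cong H^0(X,\cO_X(D_\alpha))$ for a chosen representative $D_\alpha$ via the monomial correspondence $\prod x_v^{a_v}\leftrightarrow$ section of $\cO_X(\sum a_v D_v)$, the pullback $\Psi^*$ permutes these graded pieces according to the induced action of $\Psi$ on $\clx$ and supplies compatible linear isomorphisms between them. These assemble into a graded ring automorphism of $S$, hence a scheme automorphism of $\spec S=\mbc^{\Delta_1}$; Lemma \ref{IrreIdeal} ensures $V(B)$ is preserved, so the automorphism restricts to $\tx$; and compatibility with the $\clx$-grading is precisely the normalization condition $\widetilde\Psi^{-1}G\widetilde\Psi=G$ required for membership in $\widetilde{\Aut} X$.

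The main obstacle, which is why the authors defer the full argument to Proof \ref{proofKey} at the end of the section, is making this construction rigorous. The subtle points are: (i) the identification $S_\alpha\cong H^0(X,\cO_X(D_\alpha))$ depends on the choice of representative divisor in each class, and different choices alter the lift by an element of $G$, consistent with (and in fact yielding) the kernel computation above; (ii) since $\Psi$ need not be a toric automorphism, it need not permute the toric divisors $D_v$ at all, so the induced action on $S$ is not a simple permutation of variables but a genuine linear change inside each $S_\alpha$; (iii) one must verify that the resulting graded ring map really extends to a scheme automorphism of the affine variety $\mbc^{\Delta_1}$, a step that crucially uses the simplicial hypothesis via Theorem \ref{quotient-construction} so that $\tx\to X$ is a geometric quotient. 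Once these are settled, the resulting $\widetilde\Psi$ sits in $\widetilde{\Aut} X$ and maps to $\Psi$ by construction, completing surjectivity and hence the proposition.
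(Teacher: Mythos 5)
Your handling of injectivity and exactness in the middle is correct and follows the paper's own discussion. But your treatment of surjectivity, which you rightly flag as the substantive content, differs from the paper's argument in a material way and leaves a genuine gap.

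The paper splits surjectivity into two independent steps. First (Proposition \ref{AutClx}), for a $\phi\in\Aut X$ that preserves every divisor class, each $\phi^*(D_v)-D_v$ is a principal divisor cut out by a rational function $u_v$; setting $\widetilde\phi_v:=u_v x_v$ gives a lift directly on the generators of $S$. Because one only ever specifies a ring endomorphism on the degree-one generators $x_v$, multiplicativity is automatic and there is no coherence issue to check. Second (Lemma \ref{PermuteClx}), any $\phi\in\Aut X$ is post-composed with an element of $\Aut_0 X$ and a torus element to land in $\mathcal{P}(\Delta)$; this step uses conjugacy of maximal tori (Theorem \ref{Torustheo}) together with the already-established structure of $\Aut_0 X$. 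The two steps are then assembled in \S\ref{proofofkey}. Your outline collapses both steps into one: push $\Psi^*$ through the identifications $S_\alpha\cong H^0(X,\cO_X(D_\alpha))$ to get a graded ring automorphism in one stroke. That is not what the paper's deferred proof does, so the attribution in your text is off.

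More importantly, the step "these assemble into a graded ring automorphism of $S$" is asserted, not proved, and it is exactly here that the difficulty lives. For each $\alpha\in\clx$ you need an isomorphism $\Psi^*\cO_X(D_\alpha)\cong\cO_X(D_{\Psi^*\alpha})$; such an isomorphism is a rational function $f_\alpha$ with $\mathrm{div}(f_\alpha)=\Psi^*D_\alpha-D_{\Psi^*\alpha}$, and it is only canonical up to a scalar. For the resulting map $S\to S$ to be multiplicative, you must choose the $f_\alpha$ so that $f_\alpha f_\beta/f_{\alpha+\beta}$ is the constant $1$ for all $\alpha,\beta$, i.e.\ so that a certain symmetric $2$-cocycle on $\clx$ with values in $\mbc^*$ vanishes. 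This can be arranged (the obstruction lies in $\mathrm{Ext}^1(\clx,\mbc^*)=0$ since $\mbc^*$ is divisible), but nothing in your proposal says so, and acknowledging in items (i)--(iii) that "the main obstacle is making this construction rigorous" is not the same as removing the obstacle. This is precisely the kind of subtlety behind Cox's 2014 erratum, and it is the reason the paper instead lifts only on the variables $x_v$ (where coherence is free) and deals with the class-changing part separately through fan automorphisms. Until you either verify the cocycle vanishes, or reorganize along the paper's two-step route, the surjectivity argument is incomplete.
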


    \begin{remark}
        When $X=\PP^n$ this exact sequence is quite classical: 
    \[\xymatrix{1 \ar[r] & \mbc^* \ar[r] & \mathrm{GL}(n+1,\mbc) \ar[r] & \mathrm{PGL}(n+1,\mbc) \ar[r] & 1}.\]

    When $X=\PP^1\times\PP^1$, the exact sequence is
    \[\xymatrix{1 \ar[r] & (\mbc^*)^2 \ar[r] & \mbz/2\mbz\times(\mathrm{GL}(2,\mbc))^2 \ar[r] & \mbz/2\mbz\times(\mathrm{PGL}(n+1,\mbc))^2 \ar[r] & 1}.\] 
    The group $G=(\mbc^*)^2$ is mapped into $(\mathrm{GL}(2,\mbc))^2$ and makes the quotient group $(\mathrm{PGL}(n+1,\mbc))^2$. The $\mbz/2\mbz$ in two groups both correspond to the automorphism that exchanges the two $\PP^1$, thus canceling each other.
    \end{remark}

	The next step is to understand the structure of $\widetilde{\Aut} X$. It is natural to consider a smaller group formed by the automorphisms that commutes with all the elements of $G$, since they look simply as homogeneous polynomials with respect to the $\clx$-grading.
	
	\begin{definition}\label{aut0}
		$\widetilde{\Aut}_0 X$ is the centralizer of $G$ in $\Aut\tx$.
	\end{definition}

\subsection{Step \texorpdfstring{$2.$}{2.} Understanding the structure of \texorpdfstring{$\widetilde{\Aut}_0 X$}{waut0(X)}}
    
	In order to show that $\widetilde{\Aut}_0 X$ is a linear algebraic group (see Definition \ref{reductivegp}) over $\mbc$ and to analyze its structure, we construct a faithful representation (see Definition \ref{reductivegp}) of it.

    For $\phi\in\widetilde{\Aut}_0 X$, write each component $\phi_v$ as
	\[\phi_v=\sum c_{v,D} x^D, \]
	where $v\in\Delta_1$, $D$ is a Weil divisor $D=\sum a_v D_v$ and 
\beq\label{xd} x^D:=\prod x_v^{a_v}.\eeq Then for every $g\in G=\mathrm{Hom}_\mbz(\clx, \mbc^\ast)$
	\begin{equation*}
	    (\phi \circ g)_v=\sum g(D)c_{v,D} x^D, \quad (g\circ\phi)_v=g(D_v)\sum c_{v,D} x^D.
	\end{equation*}
	So, by the definition of $G$, every monomial $x^D$ appearing in $\phi_v$ must satisfy
    \[ g(D)=g(D_v) \]
    for every $g\in G$, thus having the same degree as $x_v$. Therefore, we have:

    \begin{definition}\label{representation}
        For each automorphism $\phi=(\phi_v)_{v\in\Delta_1}$ in $\widetilde{\Aut}_0 X$, the map $x_v\mapsto \phi_v$ gives a degree-preserving $\mbc$-algebra endomorphism of $S$. (Formally speaking, $\phi$ is an automorphism of the affine variety $\mbc^{\Delta_1}\cong \mathbf{Spec} S$, so it corresponds to an automorphism of $S$ naturally. The above discussion tells us that this automorphism preserves the $\clx$-grading structure of $S$.) 
        
        Restricting this endomorphism to $S_\alpha$ for $\alpha\in \mathcal{C}$ \eqref{LinearClassEq} ($S_\alpha$ means the degree $\alpha$ part of the homogeneous coordinate ring $S$), we represent $\widetilde{\Aut}_0 X$ over the linear space $\oplus_{\alpha\in\mathcal{C}} S_\alpha$.

        To be explicit, this representation
        \beq\label{representationEq} \widetilde{\Aut}_0 X\to \mathrm{GL}(\oplus_{\alpha\in\mathcal{C}} S_\alpha): \phi\mapsto\Phi_\phi \eeq
        sends $\phi\in \widetilde{\Aut}_0 X$ to a block-diagonal matrix $\Phi_\phi$, whose diagonal blocks are denoted by
        \[\Phi_\alpha\in \mathrm{GL}(S_\alpha).\]

    Every row (and column) of $\Phi_\phi$ corresponds to a monomial in $S$, thus a divisor $D$ of the form $\sum a_v D_v$ for $v\in\Delta_1$. So, a component of $\Phi_\phi$ is positioned by two such divisors $(D,D')$.
        
	For $D=\sum a_v D_v$ and $D'=\sum a'_v D_v$ in the same divisor class $\alpha$, the $(D,D')$-component $\varphi_{D,D'}$ of the block $\Phi_\alpha$ is the coefficient of term $x^{D'}$ (see \eqref{xd}) in the polynomial 
    \beq\label{phiDEq}\phi^D:=\prod \phi_v^{a_v}.\eeq
    Observing that the normal $\mbz$-degree of the nonzero term in $\phi^D$ can not be lower than that of $x^D$, under the natural monomial basis $\{x^D\,:\, \deg(D)=\alpha\}$ of $S_\alpha$ (partially ordered by the usual $\mbz$-grading), the block $\Phi_\alpha$ looks like
	\beq\label{matrixform}\Phi_\alpha=\begin{pmatrix}
		A_\alpha & B_\alpha \\ 0 & C_\alpha
	\end{pmatrix}, \eeq
	where $A_\alpha$ and $B_\alpha$ correspond to coefficients of $\phi_v$ for $v\in\Delta_\alpha$ \eqref{DeltaAlphaEq} and the order of $A_\alpha$ is the cardinality $|\Delta_\alpha|$.

    \end{definition}

	By definition, it is a faithful representation. By Lemma \ref{IrreIdeal}, every matrix in $\mathrm{GL}(\oplus_{\alpha\in\mathcal{C}} S_\alpha)$ of this form comes from an automorphism of $\mbc^{\Delta_1}$ that preserves $V(B)$, therefore coming from an automorphism in $\widetilde{\Aut}_0 X$. The image of this representation is a linear algebraic group, since it is a closed subvariety defined by the following equations in $\mathrm{GL}(\oplus_{\alpha\in\mathcal{C}} S_\alpha)$:
	
	$\varphi_{D,D'}=0$ for those components not in $A_\alpha, B_\alpha, C_\alpha$; for components $\varphi_{D,D'}$ in $C_\alpha$, the coefficient of term $x^{D'}$ in $\prod \phi_v^{a_v}$ is the polynomial of some components in $A_\alpha, B_\alpha$, and the equation follows.
	\\
    
	{\bfseries From now on we identify $\widetilde{\Aut}_0 X$ with its image under the representation \eqref{representationEq}.}
    \\
	
	Noting that the components of all $A_\alpha, B_\alpha$ are independent of each other and the components in $C_\alpha$ are determined by them, we know that
	\[\label{dimension}\dim\widetilde{\Aut}_0 X=\sum|\Delta_\alpha|\dim S_\alpha. \]
	(To make sure $\Phi_\phi$ is invertible, $A_\alpha, C_\alpha$ must be invertible, but it is an open condition (i.e., stable under small perturbation) and does not affect the dimension of $\widetilde{\Aut}_0 X$.)
	\\
    
	Now we can find the unipotent radical (see Definition \ref{reductivegp}) of $\widetilde{\Aut}_0 X$. Take an automorphism $\phi$ and write $\Phi_\phi$ as \eqref{matrixform}. Suppose $B_\alpha=0$ for all $\alpha$, then $\Phi_\phi$ is totally determined by $A_\alpha$. Furthermore, $\Phi_\phi$ of this type has an inverse of the same type because in this case $\phi$ is just an invertible linear automorphism over $\mbc^{\Delta_1}$. So, all $\Phi_\phi$ of this type form a subgroup of $\widetilde{\Aut}_0 X$, and this subgroup is reductive since it is isomorphic to $\prod_{\alpha\in\mathcal{C}}\mathrm{GL}(|\Delta_\alpha|,\mbc)$. So, the unipotent radical of $\widetilde{\Aut}_0 X$ should in some sense represent the influence of $B_\alpha$, which inspires us to define:
	\begin{definition}
            \item[$\bullet$] $G_s$ is the subgroup of $\widetilde{\Aut}_0 X$ consisting of elements with diagonal blocks of the form
		\[\begin{pmatrix}
			A_\alpha & 0 \\ 0 & C_\alpha
		\end{pmatrix}.\]
        
		\item[$\bullet$] $R_u$ is the subgroup of $\widetilde{\Aut}_0 X$ consisting of elements with diagonal blocks of the form
		\[\begin{pmatrix}
			I & B_\alpha \\ 0 & C_\alpha
		\end{pmatrix}.\]
	\end{definition}
	
        We claim:
	\begin{lemma}
		$R_s$ is the unipotent radical (Difinition \ref{reductivegp}) of $\widetilde{\Aut}_0 X$ and there is a split exact sequence
		\[\xymatrix{1 \ar[r] & R_u \ar[r] & \widetilde{\Aut}_0 X \ar[r] & \prod_{\alpha\in\mathcal{C}}\mathrm{GL}(|\Delta_\alpha|,\mbc) \ar[r] & 1}.\]
	\end{lemma}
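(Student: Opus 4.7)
The plan is to construct an explicit surjective group homomorphism $\pi\colon \widetilde{\Aut}_0 X \to \prod_{\alpha \in \mathcal{C}} \mathrm{GL}(|\Delta_\alpha|,\mbc)$ whose kernel is exactly $R_u$, and then produce a section through $G_s$. First I would define $\pi(\phi):=(A_{\phi,\alpha})_{\alpha\in\mathcal{C}}$, where $A_{\phi,\alpha}$ is the top-left diagonal block in \eqref{matrixform}. The homomorphism property follows by unwinding the composition $\phi\circ\psi$: if $\psi_v=\sum_{w\in\Delta_\alpha}(A_{\psi,\alpha})_{w,v}x_w+(\text{higher }\mbz\text{-degree})$ and similarly for $\phi$, the coefficient of $x_u$ in the linear part of $\phi(\psi_v)$ is $\sum_w (A_{\phi,\alpha})_{u,w}(A_{\psi,\alpha})_{w,v}$, which is just matrix multiplication. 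By construction $\ker\pi=R_u$, so $R_u$ is normal in $\widetilde{\Aut}_0 X$.

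Next I would verify surjectivity through an explicit section. Given $(A_\alpha)\in\prod_\alpha \mathrm{GL}(|\Delta_\alpha|,\mbc)$, the assignment $x_v\mapsto \sum_{w\in\Delta_\alpha}(A_\alpha)_{w,v}x_w$ for $v\in\Delta_\alpha$ extends to a $\clx$-graded linear automorphism of $\mbc^{\Delta_1}$. By Lemma \ref{IrreIdeal} it preserves the irrelevant ideal $B$, hence $\tx=\mbc^{\Delta_1}\setminus V(B)$; being $\clx$-graded it commutes with $G$; and the diagonal-block form places it in $G_s$. This gives a section $G_s\hookrightarrow \widetilde{\Aut}_0 X$ of $\pi$, so the exact sequence splits with $\widetilde{\Aut}_0 X = R_u\rtimes G_s$ and $G_s\cong\prod_\alpha \mathrm{GL}(|\Delta_\alpha|,\mbc)$.

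The remaining content is that $R_u$ is precisely the unipotent radical. For $\phi\in R_u$ one has $\phi_v=x_v+h_v$ with $h_v$ supported on monomials of $\mbz$-degree at least $2$, so $\phi^D=x^D+(\text{monomials of strictly higher }\mbz\text{-degree})$. Ordering the monomial basis of each $S_\alpha$ by increasing total $\mbz$-degree, the block $\Phi_\alpha$ becomes upper triangular with $1$'s on the diagonal; thus every element of $R_u$ is unipotent. Connectedness is immediate because the map $\phi\leftrightarrow\{h_v\}$ identifies $R_u$ with an affine space, and the scaling $h_v\mapsto th_v$ deforms any $\phi\in R_u$ to the identity within $R_u$. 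Hence $R_u$ is a normal, connected, unipotent subgroup and is contained in the unipotent radical of $\widetilde{\Aut}_0 X$.

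Conversely, the unipotent radical projects under $\pi$ to a normal unipotent subgroup of the reductive product $\prod_\alpha \mathrm{GL}(|\Delta_\alpha|,\mbc)$, which must be trivial; thus the unipotent radical lies in $\ker\pi=R_u$, completing the identification. The main obstacle is largely bookkeeping: choosing a single $\mbz$-graded refinement of the monomial basis uniformly across all $\alpha$ so that the upper-triangularity argument goes through simultaneously in every block $\Phi_\alpha$. Once that ordering is fixed, the rest of the argument is essentially formal, being driven by the explicit block form \eqref{matrixform} and the degree-preservation guaranteed by Lemma \ref{IrreIdeal}.
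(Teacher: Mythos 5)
Your proof is correct and follows essentially the same route as the paper's: you exhibit $R_u$ as the kernel of the projection $\phi\mapsto(A_{\phi,\alpha})_\alpha$, show its elements are unipotent via the $\mbz$-degree filtration, and split the sequence through $G_s$. The genuine difference is one of completeness rather than method. The paper's proof is quite terse: it asserts the projection is a group homomorphism ``by definition,'' never explicitly establishes connectedness of $R_u$, and dispenses with maximality (the key content of being the unipotent \emph{radical}) with the phrase ``our claim follows from this.'' You supply precisely these missing pieces: the matrix-multiplication check of the homomorphism property (which in fact amounts to observing that $\prod_\alpha\mathrm{GL}(|\Delta_\alpha|,\mbc)$ is the induced representation on the quotient of $\bigoplus_\alpha S_\alpha$ by the span of higher $\mbz$-degree monomials, a $\Phi_\phi$-invariant filtration), the contractibility of $R_u$ via the scaling $h_v\mapsto th_v$, and the argument that the unipotent radical's image in the reductive quotient must vanish, forcing it into $\ker\pi=R_u$. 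These are exactly the steps a careful reader would want, and the last one in particular is not spelled out anywhere in the paper's proof of this lemma. One small remark: the ``bookkeeping'' concern you flag at the end about choosing a uniform $\mbz$-graded refinement across all blocks is not really an obstacle, since the $\mbz$-degree (total degree of the monomial $x^D$) gives such a filtration canonically within each $S_\alpha$ and no cross-block compatibility is needed.
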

	
	\begin{proof}
		First, for diagonal block $\Phi_\alpha$ of any elements $\Phi_\phi$ in $R_u$, $C_\alpha$ are upper triangle matrices with all eigenvalues $1$. This is deduced by the definition of the representation (Definition \ref{representation}) and noting that the components of $\phi$ look like 
        \[\phi_v=x_v+\hbox{higer-order terms}\] 
        and (see \eqref{phiDEq})
		\[\phi^D=\prod \phi_v^{a_v}=x^D+\hbox{higer-order terms}.\]  
		Thus all the elements in $R_u$ are unipotent. 

		Next, from the discussion above, we can identify $\prod_{\alpha\in\mathcal{C}}\mathrm{GL}(|\Delta_\alpha|,\mbc)$ with the subgroup $G_s$ of $\widetilde{\Aut}_0 X$ and describe the map from $\widetilde{\Aut}_0 X$ to $\prod_{\alpha\in\mathcal{C}}\mathrm{GL}(|\Delta_\alpha|,\mbc)$ by keeping $A_\alpha$, erasing $B_\alpha$ and recalculating $C_\alpha$ by the new $A_\alpha$ and $B_\alpha$. By definition, this is a surjective group homomorphism, and the kernel is clearly $R_u$. Our claim follows from this.
	\end{proof}
        
        As an immediate corollary, $\widetilde{\Aut}_0 X$ is connected, since both $R_u$ and $G_s$ are connected.
    
	By now, we have shown that:
	\begin{proposition}\label{Aut0structure}
		$\widetilde{\Aut}_0 X$ is a connected linear algebraic group of dimension $\sum|\Delta_\alpha|\dim S_\alpha$ \eqref{dimension}. Moreover, $\widetilde{\Aut}_0 X$ is the semi-direct product of its unipotent radical $R_u$ with a reductive group $G_s\cong\prod_{\alpha\in\mathcal{C}}\mathrm{GL}(|\Delta_\alpha|,\mbc)$. $(\mbc^*)^{\Delta_1}$ is a maximal torus of $G_s$, thus a maxiaml torus of $\widetilde{\Aut}_0 X$.
	\end{proposition}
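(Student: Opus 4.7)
The strategy is to assemble the results already established in Definition \ref{representation} and in the Lemma immediately preceding the proposition: nearly all of the claims appear as corollaries of the faithful representation $\phi\mapsto\Phi_\phi$ into $\mathrm{GL}(\oplus_{\alpha\in\mathcal{C}} S_\alpha)$ and the split exact sequence. My plan is therefore to verify each of the four asserted properties in turn by invoking these structural results and a small amount of additional bookkeeping.

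First, to see that $\widetilde{\Aut}_0 X$ is a linear algebraic group, I would identify it with its image under the representation \eqref{representationEq}, which is faithful by Definition \ref{representation}. As described there, this image consists precisely of the block-diagonal invertible matrices whose $\alpha$-block has the form $\left(\begin{smallmatrix} A_\alpha & B_\alpha \\ 0 & C_\alpha\end{smallmatrix}\right)$ with $C_\alpha$ a specific polynomial expression in the entries of $A_\alpha$ and $B_\alpha$ (derived from $\phi^D=\prod \phi_v^{a_v}$). These are polynomial equations cutting out a closed subvariety of $\mathrm{GL}(\oplus_{\alpha\in\mathcal{C}} S_\alpha)$, so $\widetilde{\Aut}_0 X$ is linear algebraic. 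For the dimension count, I would parametrize $\widetilde{\Aut}_0 X$ by the free entries of the $A_\alpha$ and $B_\alpha$ (invertibility of $A_\alpha$ being an open, nonempty condition), observing that $A_\alpha$ contributes $|\Delta_\alpha|^2$ parameters and $B_\alpha$ contributes $|\Delta_\alpha|(\dim S_\alpha-|\Delta_\alpha|)$ parameters, whose sum is $|\Delta_\alpha|\dim S_\alpha$; summing over $\alpha\in\mathcal{C}$ yields $\sum_\alpha|\Delta_\alpha|\dim S_\alpha$.

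The semi-direct product decomposition and reductivity of $G_s$ are immediate from the preceding Lemma: the split exact sequence
\[
    1\to R_u\to \widetilde{\Aut}_0 X\to G_s\to 1
\]
has a section by $G_s\subseteq \widetilde{\Aut}_0 X$ (the subgroup with all $B_\alpha=0$), and the product $G_s\cong\prod_{\alpha\in\mathcal{C}}\mathrm{GL}(|\Delta_\alpha|,\mbc)$ is reductive since each factor is. Connectedness of $\widetilde{\Aut}_0 X$ then follows by observing that both factors of the semi-direct product are connected: $R_u$ is a unipotent group, isomorphic as a variety to an affine space by the free parametrization through the $B_\alpha$, and $G_s$ is a finite product of general linear groups.

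Finally, for the maximal torus claim, I would argue that $(\mbc^*)^{\Delta_1}$ embeds in $G_s$ block-by-block as the diagonal subgroup of each $\mathrm{GL}(|\Delta_\alpha|,\mbc)$ factor, which is a maximal torus there (Example \ref{GLWeyl}). Promoting this to maximality in $\widetilde{\Aut}_0 X$ uses the semi-direct product structure: any torus $T'\supseteq(\mbc^*)^{\Delta_1}$ in $\widetilde{\Aut}_0 X$ projects to a torus in $G_s$ containing $(\mbc^*)^{\Delta_1}$, hence equal to it by maximality in $G_s$; the kernel of this projection is $T'\cap R_u$, which is trivial because $R_u$ is unipotent and contains no nontrivial semisimple elements. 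Thus $T'=(\mbc^*)^{\Delta_1}$. No single step presents a serious obstacle — the main work (identifying $R_u$ as the unipotent radical and proving splitness) was already carried out above; the only subtlety I would flag is being careful that the polynomial expressions for $C_\alpha$ in terms of $A_\alpha, B_\alpha$ really do give a closed, not merely constructible, subset of $\mathrm{GL}(\oplus_\alpha S_\alpha)$, which follows directly from their explicit polynomial form.
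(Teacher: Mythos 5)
Your proof is correct and follows the same approach as the paper: identify $\widetilde{\Aut}_0 X$ with its image under the faithful representation into $\mathrm{GL}(\oplus_\alpha S_\alpha)$, observe the defining equations are polynomial, count dimensions via the free $A_\alpha, B_\alpha$ entries, and invoke the split exact sequence from the preceding lemma. The paper actually leaves the maximality of $(\mbc^*)^{\Delta_1}$ implicit (the proposition reads as a summary of what was "shown by now"), so your explicit argument — that any larger torus projects onto $(\mbc^*)^{\Delta_1}$ in $G_s$ and meets the unipotent $R_u$ trivially — is a welcome bit of extra rigor, but does not constitute a different route.
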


    \begin{remark}\label{ruled surface2}
        Now we go back to Example \ref{ruled surface}, namely $\PP^2$ blown-up a point. Here $\mathcal{C}$ \eqref{LinearClassEq} contains three classes, $(1,0),(0,1)$ and $(1,1)$. The linear spaces $S_{(0,1)}$ and $S_{(1,0)}$ are spanned by monomials $y$ and $\{x'_1,x'_2\}$ respectively. The linear space $S_{(1,1)}$ is spanned by monomials $\{x_0,yx'_1,yx'_2\}$. So, the matrices in $\widetilde{\Aut}_0 X$ look like
    \[\begin{pmatrix}
	\Phi_{(0,1)} & 0 & 0 \\ 0 & \Phi_{(1,0)} & 0 \\ 0 & 0 & \Phi_{(1,1)}
    \end{pmatrix},\]
    where $\Phi_{(0,1)}$ is a nonzero complex number $a_y$, and $\Phi_{(1,0)}$ is a matrix $\begin{pmatrix}
	a_{1,1} & a_{1,2} \\ a_{2,1} & a_{2,2}
    \end{pmatrix}$ in $\mathrm{GL}(2,\mbc)$. $\Phi_{(1,1)}$      is of the form
    \[\begin{pmatrix}
	a_{0,0} & b_{0,1} & b_{0,2} \\ 0 & a_ya_{1,1} & a_ya_{1,2} \\ 0 & a_ya_{2,1} & a_ya_{2,2}
    \end{pmatrix}.\]
    
    We can define an isomorphism from $\Aut_0X$ to a subgroup $G'$ of $\mbc^*\times\mathrm{GL}(3,\mbc)$ by sending such a matrix to
    \[a_y\times\begin{pmatrix}
	a_{0,0} & b_{0,1} & b_{0,2} \\ 0 & a_ya_{1,1} & a_ya_{1,2} \\ 0 & a_ya_{2,1} & a_ya_{2,2}
    \end{pmatrix}.\]
    An element $(g_1,g_2)$ in $G=(\mbc^*)^2$ corresponds to $\begin{pmatrix}
	g_2 I_1 & 0 & 0 \\ 0 & g_1 I_2 & 0 \\ 0 & 0 & g_1g_2 I_3
    \end{pmatrix}$ in $\widetilde{\Aut}_0 X$ (here $I_i$ means the identity matrix of order $i$) and $g_2\times (g_1g_2 I_3)$ in $\mbc^*\times\mathrm{GL}(3,\mbc)$. In this way, the quotient $G'/(\mbc^*)^2$ gives the automorphism group that we have provided in Example \ref{ruled surface}.
    \end{remark}

\subsection{Step \texorpdfstring{$3.$}{3.} The relation between \texorpdfstring{$R_u$}{Ru} and \texorpdfstring{$R(P)$}{R(p)}}
    
	We can say more about the relation between $\widetilde{\Aut}_0 X$, $R_u$ and the polytope $P$ of $X$, or more precisely, the roots $R(P)$ (see Definition \ref{roots}).
	
	According to the representation constructed above (Definition \ref{representation}) and Proposition \ref{Aut0structure}, whether $\widetilde{\Aut}_0 X$ is reductive is equivalent to whether the orders of all $C_\alpha$ in the block $\Phi_\alpha$ (see \eqref{matrixform}) are all $0$, and further equivalent to whether there exists a non-prime effective divisor $D$ that is linear equivalent to some prime divisor $D_v$.
	
	Now suppose $D=\sum a_v D_v$ is such a divisor, then we can find a character $m\in M$ such that $(m)=D-D_{v_0}$ for some $v_0\in\Delta_1$. This means that for $v\in\Delta_1$,
    \begin{equation*}
        \langle m,v_0\rangle=-1,\ \langle m,v\rangle\geqslant 0\ for\ v\neq v_0.
    \end{equation*}
	The first equation shows that $m$ is in the codimension-$1$ face, denoted by $P_{v_0}$, of $P$ perpendicular to $v_0$, and the second equation shows that $m$ is in the relative interior of this face, therefore $m\in R(P)$.
	
	Conversely, for $m\in R(P)\cap P_{v_0}$, $(m)=D-D_{v_0}$ gives an effective divisor $D$ linear equivalent to $D_{v_0}$. Furthermore, $D$ is another prime divisor $D_{v'_0}$ which is equivalent to $-m\in R(P)\cap P_{v'_0}$ since $(-m)=D_{v_0}-D$.
	
	Therefore, the discussion above tells us: 
	
	\begin{lemma}\label{root-correspondence}
		There are one to one correspondences:
		\[R(P)\leftrightarrow\{(x_v, x^D)\,:\,\deg(x^D)=\deg(x_v),\ x_v\neq x^D\} \]
		\[R_s(P)\leftrightarrow\{(x_v, x_{v'})\,:\,\deg(v')=\deg(x_v),\ x_v\neq x_{v'}\}. \]
        Here the degree means the $\clx$-grading and $x^D$ is as in \eqref{xd}.
	\end{lemma}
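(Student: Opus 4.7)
The plan is to exhibit explicit bijections, reading off the data from the character/divisor exact sequence $0 \to M \to \bigoplus_{v\in\Delta_1}\ZZ D_v \to \clx \to 0$ that underlies the $\clx$-grading on $S$. For the forward map, given $m\in R(P)$, Definition \ref{roots} provides a unique $v\in\Delta_1$ with $\langle m,v\rangle=-1$ and $\langle m,v'\rangle\ge 0$ for every other $v'$. The associated principal divisor is $(m)=\sum_{v'\in\Delta_1}\langle m,v'\rangle D_{v'}=-D_v+D$, with $D:=\sum_{v'\ne v}\langle m,v'\rangle D_{v'}$ effective. Then $D\sim D_v$ in $\clx$, so $x^D$ and $x_v$ have the same degree, and $x^D\ne x_v$ because $m\ne 0$. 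I would assign $m\mapsto(x_v,x^D)$.

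For the inverse direction, given a pair $(x_v,x^D)$ of monomials in the same degree with $x^D\ne x_v$, the divisor $D-D_v$ is principal, so by exactness there is a unique $m\in M$ with $(m)=D-D_v$. The crucial step, which I expect to be the main obstacle, is to show that the coefficient $a_v$ of $D_v$ in $D$ vanishes: this is what guarantees $\langle m,v\rangle=-1$ (rather than $\ge 0$) and hence $m\in R(P)$ with $v$ as its distinguished ray. If instead $a_v\ge 1$, then $D-D_v=(a_v-1)D_v+\sum_{v'\ne v}a_{v'}D_{v'}\ge 0$ would be effective and principal; but completeness of $\Delta$ forces such a divisor to vanish, because any $m\in M$ with $\langle m,v'\rangle\ge 0$ for every $v'\in\Delta_1$ satisfies $\langle m,\cdot\rangle\ge 0$ on all of $N_\RR$, hence $m=0$. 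This would contradict $x^D\ne x_v$. Once $a_v=0$ is established, one reads off $\langle m,v\rangle=-1$ and $\langle m,v'\rangle=a_{v'}\ge 0$ for $v'\ne v$, so $m\in R(P)$; the two assignments are manifestly inverse.

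For the second correspondence, I would use $R_s(P)=R(P)\cap(-R(P))$. Given $m\in R_s(P)$ with distinguished rays $v$ (for $m$) and $v'$ (for $-m$), for any $v''\notin\{v,v'\}$ the inequalities $\langle m,v''\rangle\ge 0$ and $\langle -m,v''\rangle\ge 0$ force $\langle m,v''\rangle=0$, so $(m)=D_{v'}-D_v$ and the pair attached to $m$ by the first bijection is precisely $(x_v,x_{v'})$. Conversely, any pair $(x_v,x_{v'})$ of distinct variables in the same $\clx$-degree produces $m$ with $(m)=D_{v'}-D_v$, which is symmetric in $\pm m$ up to swapping $v$ and $v'$, so both $m$ and $-m$ lie in $R(P)$. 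In summary, the only nontrivial input is the vanishing of effective principal divisors on a complete toric variety; everything else reduces to bookkeeping with the defining exact sequence of $\clx$.
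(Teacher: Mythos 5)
Your proof is correct and follows essentially the same approach as the paper's, which is carried out in the discussion immediately preceding the lemma rather than inside a formal proof environment. You make explicit the one step the paper leaves tacit---that the coefficient of $D_v$ in $D$ must vanish, which you deduce from the fact that on a complete toric variety an effective principal divisor is zero---and this is exactly what is needed to make the inverse assignment land in $R(P)$.
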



	
	Just as the name "root" hint, similar to usual cases, the roots and the maximal torus can generate the whole group in the following sense: For every $m\in R(P)$, suppose $(m)=D-D_{v_0}$, we associate it with an one-parameter subgroup $\phi^{m,\lambda}\sse\widetilde{\Aut}_0 X$ named root automorphism by setting \beq\label{rootsAut}\phi^{m,\lambda}_{v_0}:=x_{v_0}+\lambda x^D,\;\;\; \phi^{m,\lambda}_v:=x_v \hbox{ for } v\neq v_0.\eeq
    When $m\in R_s(P)$, $\phi^{m,\lambda}$ lies in $G_s$ (see Proposition \ref{Aut0structure}), otherwise $\phi^{m,\lambda}$ lies in $R_u$. Then we claim:
	
	\begin{proposition}\label{Aut0Generator}
		$\widetilde{\Aut}_0 X$ is generated by the maxiaml torus $(\mbc^\ast)^{\Delta(1)}$ and the one-parameter subgroups $\phi^{m,\lambda}$ for $m\in R(P)$.
	\end{proposition}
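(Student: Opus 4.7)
The plan is to leverage the semi-direct product decomposition $\widetilde{\Aut}_0 X = R_u \rtimes G_s$ from Proposition \ref{Aut0structure}, with $G_s \cong \prod_{\alpha \in \mathcal{C}} \mathrm{GL}(|\Delta_\alpha|, \mbc)$ containing the maximal torus $(\mbc^*)^{\Delta_1}$. It therefore suffices to prove two claims separately:
(i) $G_s$ is generated by $(\mbc^*)^{\Delta_1}$ together with $\{\phi^{m,\lambda}\,:\,m \in R_s(P),\ \lambda \in \mbc\}$;
(ii) $R_u$ is generated by $\{\phi^{m,\lambda}\,:\,m \in R_u(P),\ \lambda \in \mbc\}$.

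For (i), Lemma \ref{root-correspondence} identifies each $m \in R_s(P)$ with an ordered pair $(x_{v_0}, x_{v'})$ of distinct variables in a common class $\Delta_\alpha$; writing $(m) = D_{v'} - D_{v_0}$, the root automorphism $\phi^{m,\lambda}$ sends $x_{v_0} \mapsto x_{v_0} + \lambda x_{v'}$ and fixes every other variable. In the block form \eqref{matrixform}, this acts on the $A_\alpha$ block as an elementary transvection and acts trivially on the $A_\beta$ blocks for $\beta \neq \alpha$. Since each factor $\mathrm{GL}(|\Delta_\alpha|, \mbc)$ of $G_s$ is classically generated by its diagonal torus together with such transvections (Gauss elimination), claim (i) follows.

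For (ii), the key observation is that Lemma \ref{root-correspondence} identifies $R_u(P)$ bijectively with the set of pairs $(x_{v_0}, x^D)$ in which $x^D$ is a \emph{non-variable} monomial of class $[D_{v_0}]$, and these pairs parametrize precisely the free entries of the $B_\alpha$ blocks. A dimension count using \eqref{dimension} gives $|R_u(P)| = \dim R_u$, and the tangent vectors $\partial_\lambda \phi^{m,\lambda}|_{\lambda = 0}$ for $m \in R_u(P)$ are linearly independent in $\mathrm{Lie}(R_u)$, hence span it. I then plan to establish group-theoretic generation by induction on the total $\mbz$-degree of the non-variable monomial $x^D$: given $\phi \in R_u$, successively compose on the right with suitable $\phi^{m, -\lambda}$'s to cancel the lowest-$\mbz$-degree contributions to each $\phi_v - x_v$, using that each such correction only introduces error terms of strictly higher $\mbz$-degree and therefore cannot undo earlier cancellations.

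The main obstacle is exactly the passage in (ii) from the Lie-algebraic spanning to genuine group generation: since $R_u$ is non-abelian, products of $\phi^{m,\lambda}$'s carry commutator corrections through BCH, so the naive linear-algebra count does not suffice. The inductive scheme above resolves this precisely because the $\mbz$-degree filtration on each block $\Phi_\alpha$ has bounded length, forcing termination after finitely many steps and exhibiting $\phi$ as a finite product of root automorphisms; the torus is not needed here, only for part (i).
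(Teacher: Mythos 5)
Your proposal is correct and shares the paper's overall strategy: use the decomposition $\widetilde{\Aut}_0 X = R_u \rtimes G_s$ from Proposition \ref{Aut0structure} together with the root correspondence of Lemma \ref{root-correspondence}, and handle the two factors separately. The difference is in how you establish group-theoretic (not merely Lie-algebraic) generation. The paper handles $G_s$ by observing that the tangent vectors of the torus and the semisimple root one-parameter groups span $\mathrm{Lie}(G_s)$, and handles $R_u$ by checking the explicit identity $\Phi^{m,\lambda}=\exp(\lambda\theta^m)$, noting that $\{\theta^m : m\in R_u(P)\}$ is a basis of $\mathrm{Lie}(R_u)$, and then invoking the standard fact that one-parameter subgroups whose infinitesimal generators span the Lie algebra generate an open neighborhood of the identity in a connected Lie group, hence the whole group. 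You instead use Gauss elimination (diagonal torus plus transvections generate $\mathrm{GL}_k$) for $G_s$, and a hands-on induction on the $\mbz$-degree of the non-variable monomials appearing in $\phi_v - x_v$ for $R_u$, progressively cancelling the lowest-degree corrections by composing with root automorphisms. Your route is more elementary and self-contained -- it avoids the Lie-theoretic generation lemma entirely and addresses head-on the worry you raise about BCH corrections in the non-abelian $R_u$ -- at the cost of being somewhat longer; the paper's route is shorter but leans on standard Lie theory. Both are valid, and your induction terminates for exactly the reason you state: the $\mbz$-degree filtration on each block $\Phi_\alpha$ has bounded length, since there are finitely many monomials in each fixed class $\alpha$.
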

	
	\begin{proof}
		For $m\in R_s(P)$, $\phi^{m,\lambda}$ correspond to the usual roots of the reductive group 
        \[G_s\cong\prod_{\alpha\in\mathcal{C}}\mathrm{GL}(|\Delta_\alpha|,\mbc),\]
        (namely, the matrices with $1$ in all diagonal positions, $\lambda\in\mbc^*$ in another position and $0$ in other positions). They together with the maximal torus $(\mbc^\ast)^{\Delta_1}$ generate $G_s$, since the tangent vectors of these one-parameter groups and the maximal torus in the identity of $\widetilde{\Aut}_0 X$ span the Lie algebra of it.
		
		For $m\in R_u(P)$, taking the derivative of $\Phi^{m,\lambda}$ (the image of $\phi^{m,\lambda}$ under the representation \eqref{representationEq}) with respect to $\lambda$, we get vectors 
        \[\theta^m:=\left.\frac{d}{d\lambda}\right|_{\lambda=0}\Phi^{m,\lambda}\] 
        in the Lie algebra of $R_u$ whose diagonal blocks are of the form
		\[\theta_\alpha^m=\begin{pmatrix}
			0 & E^m \\ 0 & C'_\alpha
		\end{pmatrix},\] 
		where $C'_\alpha$ is the derivative in corresponding place, $E^m$ has $1$ in the $(v_0,D)$ position and $0$ in other positions if $(m)=D-D_{v_0}$. Since $x^D$ does not contain $x^{v_0}$, $\Phi^{m,\lambda}$ has $1$ in the $(D,D)$-position and $0$ in other positions in the $D$-row, which shows that the $D$-row of $C'_\alpha$ is $0$ and for $k>1$
		\[(\theta_\alpha^m)^k=\begin{pmatrix}
			0 & 0 \\ 0 & D_\alpha^k
		\end{pmatrix}. \]
		Thus 
		\[\exp(\lambda\theta_m)=I+\sum_{k>0} (\theta^m)^k/k!=\Phi^{m,\lambda}.\]
		
		The vectors $\theta_m$ form a basis of the Lie algebra of $R_u$ because they are linearly independent and the number of them equals to the dimension of $R_u$. Therefore, a small open neighborhood of identity in $R_u$, thus $R_u$ itself, is generated by $\Phi^{m,\lambda}$ for $m\in R_u(P)$. 
		
		Combining these with the fact $\widetilde{\Aut}_0 X=G_s\ltimes R_u$ (Proposition \ref{Aut0structure}), we get what we want.
	\end{proof}

        \begin{remark}
        Example \ref{ruled surface} clearly shows the phenomenon discussed in this subsection:
        
        There are four roots in $R(P)$, $\{(0,1),(1,0),(-1,1),(1,-1)\}$. The latter two roots are semi-simple. Using the notation there, the roots $(0,1),(1,0)$ correspond to the divisors $E+D'_i-D_0$ and the pairs $(x_0, yx'_i)$ respectively. The one-parameter groups associated with them are exactly the first two listed in Example \ref{ruled surface}. The latter two roots $(-1,1),(1,-1)$ correspond to the divisors $D'_1-D'_2, D'_2-D'_1$ and the pairs $(x'_1, x'_2),(x'_2,x'_1)$ respectively. They are associated to the latter two one-parameter groups listed in Example \ref{ruled surface}.            
        \end{remark}

    \subsection{Step \texorpdfstring{$4.$}{4.} The key point in the proof of Proposition \ref{KeyExact}}\label{proofofkey}
    
	Finally, we come to the part skipped before: understanding the relation between $\widetilde{\Aut} X$, $\widetilde{\Aut}_0 X$ and $\Aut X$. The key point is an observation: 
	\begin{proposition}\label{AutClx}
		For any $\phi\in \aut\tx$, $\phi$ is in $\widetilde{\Aut}_0 X$ if and only if $it$ induces an automorphism on $X$, still denoted by $\phi$, which preserves all divisor classes. Formally, there is an exact sequence
		\beq\label{AutClxEq}\xymatrix{1 \ar[r] & G \ar[r] & \widetilde{\Aut}_0 X \ar[r] & \Aut X \ar[r] & \Aut\clx}.\eeq
        Here $\clx$ is the divisor class group of $X$ (Definition \ref{clx}). 
	\end{proposition}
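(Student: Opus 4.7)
The plan is to package the conjugation action of $\waut X$ on $G$ into a single homomorphism landing in $\aut \clx$, identify it geometrically with the pullback action on divisor classes, and then read off both directions of the equivalence together with the exactness of \eqref{AutClxEq}.

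First, since $\waut X$ is by definition the normalizer of $G$, conjugation yields a group homomorphism $\sigma: \waut X \to \aut G$, and by the defining property of the centralizer, $\ker \sigma = \waut_0 X$. The canonical duality $G = \Hom_\mbz(\clx, \mbc^\ast)$ turns algebraic automorphisms of $G$ into abstract automorphisms of $\clx$, so dualizing gives $\sigma^\vee: \waut X \to \aut \clx$ with the same kernel $\waut_0 X$.

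Second, I would perform the bookkeeping identifying $\sigma^\vee$ with the geometric pullback action. For $\phi \in \waut X$, write $\sigma_\phi(g) = \phi^{-1} g \phi \in G$ and expand the components $\phi_v = \sum_D c_{v,D}\, x^D$. Since $g \in G$ scales $x_v$ by $g([D_v])$, comparing $g \circ \phi$ with $\phi \circ \sigma_\phi(g)$ componentwise forces, for every monomial $x^D$ occurring in $\phi_v$, the identity $\sigma_\phi(g)(D) = g([D_v])$ for all $g$; equivalently $[D] = (\sigma_\phi^\vee)^{-1}[D_v]$. This is the natural generalization of the computation in Definition \ref{representation}, which handled the special case $\sigma_\phi = \id$. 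Because $\phi^{-1}(V(x_v)) = V(\phi_v)$ and the latter descends to a torus-invariant divisor on $X$ of class $[D] = (\sigma_\phi^\vee)^{-1}[D_v]$, we recognize $(\sigma_\phi^\vee)^{-1}$ as the pullback action of the induced $\phi$ on the generators $\{[D_v]\}$, hence (since these generate $\clx$ by Definition \ref{clx}) on all of $\clx$.

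Finally, combining the two steps, for $\phi \in \aut \tx$ (necessarily in $\waut X$ once it descends to $X$) the three conditions ``$\phi \in \waut_0 X$'', ``$\sigma_\phi^\vee = \id$'', and ``$\phi$ acts trivially on $\clx$'' are equivalent, which is the desired biconditional. For the exact sequence \eqref{AutClxEq}: $G \sse \waut_0 X$ via its inclusion in the maximal torus $(\mbc^\ast)^{\Delta_1}$ of $\waut_0 X$ (Proposition \ref{Aut0structure}), so exactness at $G$ and at $\waut_0 X$ follows from Proposition \ref{KeyExact} applied to the subgroup $\waut_0 X \sse \waut X$, and exactness at $\aut X$ is precisely the equivalence just proved. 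The main subtlety will be the duality bookkeeping in step two --- in particular, keeping the direction of the pullback consistent with the $G$-action convention --- but the content is a short algebraic computation building directly on the one already carried out in Definition \ref{representation}.
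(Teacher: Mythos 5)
Your duality repackaging of the centralizer characterization is correct \emph{for $\phi$ already known to lie in $\widetilde{\Aut}X$}: such a $\phi$ centralizes $G$ iff $\sigma_\phi=\id$ iff $\sigma_\phi^\vee=\id$ iff the induced automorphism of $X$ acts trivially on $\clx$, and this correctly gives one inclusion of exactness at $\Aut X$. The kernel statements at $G$ and at $\widetilde{\Aut}_0X$ are also fine, but should be cited to the discussion preceding Proposition \ref{KeyExact}, not to Proposition \ref{KeyExact} itself.

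The genuine gap is in the other inclusion at $\Aut X$. Exactness there requires that \emph{every} $\psi\in\Aut X$ acting trivially on $\clx$ lies in the image of $\widetilde{\Aut}_0X\to\Aut X$, i.e., admits \emph{some} lift to $\Aut\tx$. Your biconditional only constrains automorphisms of $X$ that are already assumed to lift; it never produces a lift. What your argument actually shows is
\[
\im\bigl(\widetilde{\Aut}_0X\to\Aut X\bigr)=\ker\bigl(\Aut X\to\Aut\clx\bigr)\cap\im\bigl(\widetilde{\Aut}X\to\Aut X\bigr),
\]
which collapses to \eqref{AutClxEq} only once one knows $\widetilde{\Aut}X\to\Aut X$ is surjective. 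That surjectivity is exactly Proposition \ref{KeyExact}, and the paper's proof of Proposition \ref{KeyExact} (at the end of Step~5) \emph{uses} Proposition \ref{AutClx} together with Lemma \ref{PermuteClx}. So invoking Proposition \ref{KeyExact} in any load-bearing way here is circular. The missing ingredient is a concrete lifting construction: given $\psi\in\Aut X$ fixing $\clx$, the paper writes $\psi^*(D_v)-D_v=(u_v)$ for a rational function $u_v$ on $X$, pulls $u_v$ back to $\tx$, sets $\widetilde\phi_v:=u_vx_v$ (a polynomial of the same $\clx$-degree as $x_v$), and then rescales by a constant $c\in(\mbc^*)^{\Delta_1}$ chosen via \eqref{G-ExactSeq} so that $\widetilde\phi$ descends to $\psi$ itself rather than to a torus-translate of $\psi$. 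Without this (or an alternative surjectivity argument), the exact sequence is not proved. A minor side remark: $V(\phi_v)$ is in general not a torus-invariant divisor on $X$ --- only its class in $\clx$ matters --- but this does not affect your degree computation.
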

	
	\begin{proof}
		One side of this is clear. $\widetilde{\Aut}_0 X$ is connected and $\Aut\clx$ is discrete, so the image of $\widetilde{\Aut}_0 X$ in $\Aut\clx$ must be the identity. Now we show that every automorphism $\phi$ on $X$ that preserves all divisor classes can be lifted to $\widetilde{\Aut}_0 X$.
		
		We denote by $\phi^*$ the pull-back map of $\phi$. Since $\phi^*(D_v)$ has the same degree as $D_v$, the divisor $\phi^*(D_v)-D_v$ is a principal divisor defined by a rational function $u_v$ on $X$. Pulling back $u_v$ to $\tx$ (see Theorem \ref{quotient-construction}) and multiplying by $x_v$, we get a regular function on $\tx$, thus on $\mbc^{\Delta_1}$. Then we can define
        \[\widetilde{\phi}_v:=u_v x_v\]
        and because $u_v$ is $G$-invariant, $\widetilde{\phi}_v$ is a polynomial of the same degree as $x_v$.
		
		We now combine $\widetilde{\phi}_v$ to obtain an automorphism $\widetilde{\phi}$. By definition we have $\widetilde{\phi}\in \widetilde{\Aut}_0 X$, so $\widetilde{\phi}$ descends to an automorphism, also denoted by $\widetilde{\phi}$, on $X$. However, it does not always coincide with the automorphism $\phi$. Luckily, we still have some room in $\widetilde{\phi}_v$ as we can multiply it by a non-zero constant $c_v$. 
        
        We first note that 
        \beq\label{adjust}
        \frac{m}{\widetilde{\phi}^*m}=\prod \frac{x_v^{\langle m,v \rangle}}{\phi^*x_v^{\langle m,v \rangle}}=\prod\frac{1}{u_v}
        \eeq
        and
        \[(\frac{m}{\phi^*m})=\sum D_v^{\langle m,v \rangle}-\sum\phi^*D_v^{\langle m,v \rangle}=(\prod\frac{1}{u_v}),\]
        so $\phi^*m$ and $\widetilde{\phi}^*m$ define the same divisor. Hence $\widetilde{\phi}^*m/\phi^*m$ is a constant. Therefore, $\widetilde{\phi}$ gives an element $t\in T'_X\cong\Hom_\ZZ(M,\mbc)$ by sending $m\in M$ to $\widetilde{\phi}^*m/\phi^*m$. 
        
        For an element $c=(c_1,...,c_v,...)\in (\mbc^*)^{\Delta(1)}$, we can adjust $\widetilde{\phi}$ by defining 
        \[c\widetilde{\phi}:=(c_v\widetilde{\phi}_v).\]
        Recalling the exact sequence \eqref{G-ExactSeq}, we denote the image of $c$ in $T'_X$ by $t_c$, then \eqref{adjust} tells us that $c\widetilde{\phi}$ corresponds to an element $t_c\cdot t\in\Hom_\ZZ(M,\mbc)$, where $\cdot$ means the multiplication in the group $T'_X$. By \eqref{G-ExactSeq}, we can find a $c'$ such that $t_{c'}=t^{-1}$.



		Now we denote $c'\widetilde{\phi}$ by $\widetilde{\phi'}$, and $\widetilde{\phi'}$ certainly descends to $X$. We can see $\widetilde{\phi'}^*m=\phi^*m$ for all characters $m \in M$, thus $\widetilde{\phi'}$ is lifted from $\phi$.
    \end{proof}

    Using this exact sequence and the connectedness argument, we can prove that
    \begin{corollary}\label{connectedness}
        The image of $\widetilde{\Aut}_0 X$ is the connected component $\Aut_0 X$ of $\Aut X$ containing the identity.
    \end{corollary}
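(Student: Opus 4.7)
The plan is to combine the exact sequence from Proposition \ref{AutClx} with a two-sided connectedness argument. Let $\pi:\widetilde{\Aut}_0 X\to \Aut X$ denote the homomorphism induced by descent to the categorical quotient $X=\tx//G$ (Theorem \ref{quotient-construction}). By Proposition \ref{AutClx}, the image of $\pi$ equals the kernel of the restriction map $\rho:\Aut X\to \Aut\clx$. Thus it suffices to show two inclusions: $\mathrm{im}\,\pi\subseteq \Aut_0X$ and $\Aut_0X\subseteq \mathrm{im}\,\pi=\ker\rho$.

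For the first inclusion, I would invoke the connectedness of $\widetilde{\Aut}_0 X$ observed right after Proposition \ref{Aut0structure}: as a semi-direct product $G_s\ltimes R_u$ of connected groups, $\widetilde{\Aut}_0 X$ itself is connected. Since $\pi$ is a morphism of algebraic groups and carries the identity to the identity, $\mathrm{im}\,\pi$ is a connected subset of $\Aut X$ containing $\mathrm{id}$, hence lies in the connected component $\Aut_0 X$.

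For the reverse inclusion, I would argue that $\Aut\clx$ is discrete: $\clx$ is a finitely generated abelian group (since $\Delta_1$ is finite), so $\Aut\clx$ is a countable discrete group. The map $\rho:\Aut X\to \Aut\clx$ is locally constant, because for any $\phi\in\Aut X$ the linear action $\phi_*$ on $\clx$ depends only on its effect on the finitely many prime toric divisors $D_v$, which in turn is locally constant in $\phi$ (equivalently, $\rho^{-1}(\mathrm{id})$ is closed and open in $\Aut X$). Therefore $\rho$ sends the connected set $\Aut_0 X$ to a single point, namely $\mathrm{id}\in\Aut\clx$, giving $\Aut_0 X\subseteq \ker\rho=\mathrm{im}\,\pi$.

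The main thing to pin down carefully is the technical claim that $\rho$ is continuous (equivalently locally constant), which requires verifying that $\Aut X$ carries a natural topology/structure with respect to which the assignment $\phi\mapsto \phi_*\in\Aut\clx$ varies continuously. I would justify this by noting that in a connected family $\{\phi_t\}$ of automorphisms, $\phi_t^*D_v$ is an integral flat family of effective divisors on $X$, and integral flat families of divisors have constant divisor class, so $\rho(\phi_t)$ is constant in $t$. Modulo this (essentially formal) step, the corollary follows immediately from the exactness of \eqref{AutClxEq} together with the connectedness of $\widetilde{\Aut}_0 X$.
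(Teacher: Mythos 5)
Your proof is correct and follows the same two-sided connectedness argument as the paper: use the exact sequence from Proposition \ref{AutClx} together with connectedness of $\widetilde{\Aut}_0 X$ to get one inclusion, and discreteness of $\Aut\clx$ plus connectedness of $\Aut_0 X$ to get the other. You go a bit further than the paper in justifying why the map $\Aut X\to \Aut\clx$ is locally constant (via flat families of divisors), a point the paper takes as implicit; that is a reasonable addition but does not change the underlying approach.
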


    \begin{proof}
        Considering the exact sequence \eqref{AutClxEq} we just proved. $\Aut_0 X$ is connected, so it must be mapped to the identity of $\Aut \clx$, thus it is contained in the image of $\widetilde{\Aut}_0 X$. On the other hand, $\widetilde{\Aut}_0 X$ must be sent to $\Aut_0 X$ since it is connected.
    \end{proof}
    
    Therefore, $\Aut X/\Aut_0X$ is naturally embedded in $\Aut\clx$.

\subsection{Step \texorpdfstring{$5.$}{5.} Showing the role of \texorpdfstring{$\Aut \Delta$}{Aut Delta}
}

	Clearly, the next task is to determine the image of $\Aut X$ in $\Aut\clx$. Example \ref{square} brings about some natural automorphisms to our mind, the automorphisms induced by the automorphisms of the lattice $N$ that preserve the fan $\Delta$. 
    \begin{definition}\label{PDelta}
        The automorphism $\mathcal{P}\in\Aut \Delta$ \eqref{AutDeltaEq} permutes the vectors in $\Delta_1$, thus defining an automorphism $\phi_\mathcal{P}$ of $\mbc^{\Delta_1}$ by permuting the coordinates. All such automorphisms in $\Aut \mbc^{\Delta_1}$ form a group. We denote it by $\mathcal{P}(\Delta)$.
    \end{definition}
     By definition, $\mathcal{P}\mapsto \phi_\mathcal{P}$ is an isomorphism from $\Aut \Delta$ to $\mathcal{P}(\Delta)$.
	
	Conjugating an element $g\in G\sse(\mbc^\ast)^{\Delta_1}$ by $\phi_\mathcal{P}\in \mathcal{P}(\Delta)$ works as applying the corresponding permutation to the diagonal components of $g$. The result is clearly an element of $(\mbc^\ast)^{\Delta_1}$, and is actually contained in $G$ since, if $D,D'$ are linear equivalent by a character $m\in M$, then their images under the permutation are linear equivalent by $\mathcal{P}^*(m)$, where $\mathcal{P}^*$ means the dual action of $\mathcal{P}$ over $M$. In other words, there is a commutative diagram:
	\[\xymatrix{0 \ar[r] & M \ar[d] \ar[r] & \bigoplus_v\mbz D_v \ar[d] \ar[r] & \clx \ar[d] \ar[r] & 0  \\
		0 \ar[r] & M \ar[r] & \bigoplus_v\mbz D_v \ar[r] & \clx \ar[r] & 0}. \]
	The left vertical arrow is the dual action of $\mathcal{P}$ on $M$, the middle vertical arrow is the permutation corresponding to $\phi_\mathcal{P}$, and the right vertical arrow is induced by the five lemma. Applying the functor $\mathrm{Hom}_\mbz(\cdot, \mbc^*)$, we get an isomorphism of $G$, which is exactly the conjugation of $\phi_\mathcal{P}$. Therefore, we have $\mathcal{P}(\Delta)\sse \widetilde{\Aut} X$ and every $\phi_\mathcal{P}\in \mathcal{P}(\Delta)$ descends to an automorphism on $X$. We also use $\phi_\mathcal{P}$ to denote these automorphisms. Since $\mathcal{P}(\Delta)\cap G=\id$, we can view $\mathcal{P}(\Delta)$ as a subgroup of $\Aut X$.
	\\
    
	We now show that
    \begin{lemma}\label{PermuteClx}
        In the exact sequence \eqref{AutClxEq}, the image of $\mathcal{P}(\Delta)$ (Definition \ref{PDelta}) in $\Aut\clx$ coincides with the image of $\Aut X$.
    \end{lemma}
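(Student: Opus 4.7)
The inclusion $\mathcal{P}(\Delta) \hookrightarrow \Aut X$ already gives one containment of images, so the task reduces to showing that every $\phi \in \Aut X$ has the same image in $\Aut\clx$ as some $\phi_{\mathcal{P}} \in \mathcal{P}(\Delta)$. Since $\Aut_0 X$ lies in the kernel of $\Aut X \to \Aut\clx$ by Proposition \ref{AutClx} and Corollary \ref{connectedness}, it suffices to modify $\phi$ on the left by an element of $\Aut_0 X$ and land in a position where a candidate $\mathcal{P} \in \Aut\Delta$ becomes visible.

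The strategy is to arrange that $\phi$ normalizes the maximal torus $T'_X$ of $\Aut_0 X$ (Proposition \ref{Aut0structure}). Because $\Aut_0 X$ is normal in $\Aut X$ (being the identity component), the subgroup $\phi T'_X \phi^{-1}$ sits inside $\Aut_0 X$ and is a maximal torus there. Conjugacy of maximal tori (Theorem \ref{Torustheo}), applied inside the linear algebraic group $\Aut_0 X$, supplies $\psi \in \Aut_0 X$ with $\psi(\phi T'_X \phi^{-1})\psi^{-1} = T'_X$, i.e.\ $\psi\phi \in N_{\Aut X}(T'_X)$.

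Conjugation by $\psi\phi$ on $T'_X$ gives a $\ZZ$-linear automorphism $\mathcal{P}^* \in \mathrm{GL}(M)$ of the character lattice, whose dual $\mathcal{P} \in \mathrm{GL}(N)$ I claim lies in $\Aut\Delta$. The reason is that $\psi\phi$ normalizes $T'_X$ and therefore sends $T'_X$-orbits to $T'_X$-orbits; in particular it permutes the torus-invariant prime divisors $\{D_v\}_{v \in \Delta_1}$, and the induced permutation of $\Delta_1$ is exactly the one implemented by $\mathcal{P}$ on $N$ under the dictionary between cones of $\Delta$ and $T'_X$-orbit closures. Preservation of the incidence of orbits then forces $\mathcal{P}$ to preserve the fan, so $\mathcal{P} \in \Aut\Delta$ and $\phi_{\mathcal{P}} \in \mathcal{P}(\Delta)$ is well-defined. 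Both $\psi\phi$ and $\phi_{\mathcal{P}}$ act on the class $[D_v] \in \clx$ by sending it to $[D_{\mathcal{P}(v)}]$, and since $\{[D_v]\}$ generates $\clx$ (from the exact sequence preceding \eqref{G-ExactSeq}), they induce the same automorphism of $\clx$. Combined with the fact that $\psi \in \Aut_0 X$ is trivial on $\clx$, this yields that $\phi$ and $\phi_{\mathcal{P}}$ have the same image in $\Aut\clx$, as required.

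The main obstacle is the step identifying the linear map $\mathcal{P}$ obtained from conjugation on the torus with the combinatorial permutation of $\Delta_1$ coming from $\psi\phi$ acting on the $D_v$. This relies on the standard dictionary identifying $T'_X$-orbits with cones of $\Delta$: the divisor $D_v$ is the closure of the codimension-one orbit corresponding to the ray $\RR_{\ge 0}v$, and the equality $(\psi\phi)(D_v) = D_{\mathcal{P}(v)}$ is precisely what encodes the compatibility between the action of $\psi\phi$ on $T'_X$ and its action on the boundary orbit structure of $X$. Once this compatibility is in place, the remainder of the proof is purely formal.
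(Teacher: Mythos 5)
Your proof is correct and takes essentially the same route as the paper's: both apply conjugacy of maximal tori (Theorem \ref{Torustheo}) inside $\Aut_0 X$ to replace $\phi$ by $\psi\phi$ (the paper's $\phi'\phi$) normalizing $T'_X$, then extract the lattice automorphism $\mathcal{P}\in\Aut\Delta$. The only cosmetic difference is in the final identification: the paper further translates by a $t\in T'_X$ so that $t\phi'\phi$ fixes the identity of $T_X$, making it literally a group automorphism of the torus and hence equal to $\phi_{\mathcal{P}}$ on the nose, whereas you stop at normalization and match the actions on $\clx$ by reading the permutation of the $D_v$ off the orbit--cone correspondence.
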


    \begin{proof}
        By Proposition \ref{Aut0structure} and the exact sequence \eqref{G-ExactSeq}, the natural torus action $T'_X\cong(\mbc^*)^{\Delta_1}/G$ is a maximal torus in $\Aut_0 X$. For any $\phi\in\Aut X$, $\phi T'_X\phi^{-1}$ is another maximal torus of $\Aut_0 X$, so we can find a $\phi'\in\Aut_0X$ to send it back to $T'_X$ by Theorem \ref{Torustheo}. Namely, \beq\label{conjugateeq}\phi'\phi T'_X\phi^{-1}(\phi')^{-1}=T'_X.\eeq $\phi'\phi$ is in the same connected component as $\phi$.
        
        We claim that if we act $\phi'\phi$ on $X$, we have $\phi'\phi(T_X)=T_X$ in $X$ (here $T_X$ is the $n$-dimensional orbit of $T'_X$-action in $X$): The complex torus $\phi'\phi(T_X)$ is preserved by the $T'_X$-action on $X$ by \eqref{conjugateeq}. But $\phi'\phi(T_X)\cap T_X\neq\emptyset$, so \[T'_X(\phi'\phi(T_X)\cap T_X)=T_X,\] which shows $\phi'\phi(T_X)\sse T_X$. But both $T_X$ and $\phi'\phi(T_X)$ are $n$-dimensional $T'_X$-orbits, thus $\phi'\phi(T_X)=T_X$.
	
	Next, we find an element $t\in T'_X$ such that $t\phi'\phi(T_X)$ preserves the identity, so $t\phi'\phi$ restricted to $T_X$ is a group automorphism. Therefore, $t\phi'\phi$ induces a linear automorphism of the lattice $M=\Hom_{\ZZ}(T_X,\mbc^*)$, the duality of which gives an automorphism on the lattice $N$, thus an element $\phi_\mathcal{P}$ in $\mathcal{P}(\Delta)$. The automorphism induced by $\phi_\mathcal{P}$ is actually $t\phi'\phi$ on $X$ since they map all the characters in the same way. Because $\phi_\mathcal{P}=t\phi'\phi$ is in the same connected component of $\phi$, they have the same image in $\Aut\clx$.
    \end{proof}

        By now, we can finally complete the proof of Proposition \ref{KeyExact}.

        \begin{proof}[Proof of Proposition \ref{KeyExact}]\label{proofKey}
        The discussion above Proposition \ref{KeyExact} has shown the exactness in places $G$ and $\widetilde{\Aut} X$. We only need to show that all automorphisms of $X$ can be lifted to $\widetilde{\Aut} X$. 
        
        Proposition \ref{AutClx} tells us how to lift the automorphisms in the identity component $\Aut_0 X$ to $\widetilde{\Aut}_0X$. Combining with Lemma \ref{PermuteClx}, we have $\Aut X\cong \Aut_0 X\rtimes\mathcal{P}(\Delta)$, but $\widetilde{\Aut}_0X\rtimes\mathcal{P}(\Delta)\sse\widetilde{\Aut}X$, so the descending map from $\widetilde{\Aut}X$ to $\Aut X$ is surjective. This completes the last part of Proposition \ref{KeyExact}.
        \end{proof}

	Furthermore, the image of $\mathcal{P}(\Delta)$ in $\Aut\clx$ is $\mathcal{P}(\Delta)/(\mathcal{P}(\Delta)\cap \widetilde{\Aut}_0 X)$. We claim:

    \begin{proposition}\label{WeylGroup}
        $\mathcal{P}(\Delta)\cap \widetilde{\Aut}_0 X$ is isomorphic to the Weyl group $W$ of $G_s$ with respect to the maximal torus $(\mbc^*)^{\Delta_1}$ (see Definition \ref{Weylgp}). 
        
        In addition, $\Aut X/\Aut_0X\cong\mathcal{P}(\Delta)/W \cong\Aut P/\Aut_0 P$ (see Definition \ref{Aut0P}).
    \end{proposition}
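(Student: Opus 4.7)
My plan divides the proof into three steps corresponding to the two isomorphisms.

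\textbf{Step 1 (the intersection).} I would first observe that $\phi_\mathcal{P}$ centralizes $G$ precisely when $\mathcal{P}\in\Aut\Delta$ permutes $\Delta_1$ within the divisor classes $\Delta_\alpha$ of \eqref{DeltaAlphaEq}: conjugation by $\phi_\mathcal{P}$ on $(\mbc^*)^{\Delta_1}$ is a coordinate permutation, and it preserves $G=\Hom_\ZZ(\clx,\mbc^*)$ pointwise iff $[D_{\mathcal{P}(v)}]=[D_v]$ for every $v\in\Delta_1$. Any such $\phi_\mathcal{P}$ is a block permutation matrix inside $G_s\cong\prod_\alpha\mathrm{GL}(|\Delta_\alpha|,\mbc)$, so it normalizes the maximal torus $(\mbc^*)^{\Delta_1}$ and represents an element of the Weyl group $W=\prod_\alpha S_{|\Delta_\alpha|}$. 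The resulting assignment $\mathcal{P}(\Delta)\cap\widetilde{\Aut}_0X\to W$ is injective because the only permutation matrix in the diagonal torus is the identity.

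\textbf{Step 2 (surjectivity onto $W$).} Since $W$ is generated by transpositions $(v,v')$ with $v,v'$ in a common class, it suffices to realize each such transposition by an element of $\mathcal{P}(\Delta)$. Lemma \ref{root-correspondence} supplies a semisimple root $m\in R_s(P)$ with $(m)=D_{v'}-D_v$, so $\langle m,v\rangle=-1$, $\langle m,v'\rangle=1$, and $\langle m,w\rangle=0$ for every other $w\in\Delta_1$. The lattice involution
$$s_m(n):=n-\langle m,n\rangle(v'-v)$$
then swaps $v\leftrightarrow v'$ and fixes all other rays. The crux is to show $s_m\in\Aut\Delta$. Rather than a direct cone-by-cone argument, I would pass to the toric variety: the coordinate permutation $\phi_{s_m}$ lies in $\widetilde{\Aut}_0X$, normalizes $(\mbc^*)^{\Delta_1}$, and descends by Corollary \ref{connectedness} to $\bar\phi_{s_m}\in\Aut_0X$ normalizing $T'_X$. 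Applying the argument of Lemma \ref{PermuteClx} to $\bar\phi_{s_m}$ (with the auxiliary $\phi'$ there taken to be the identity, since $\bar\phi_{s_m}$ already normalizes $T'_X$) yields $t\in T'_X$ and $\mathcal{P}\in\Aut\Delta$ with $t\cdot\bar\phi_{s_m}=\phi_\mathcal{P}$. Since $T'_X$ preserves every torus-invariant prime divisor setwise, both sides induce the same permutation of $\Delta_1$, namely the swap $v\leftrightarrow v'$. As $\Delta_1$ spans $N_\RR$ by completeness, the linear maps $\mathcal{P}$ and $s_m$ agree, so $s_m\in\Aut\Delta$ and $\phi_{s_m}\in\mathcal{P}(\Delta)$.

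\textbf{Step 3 (quotient formula and Fano identification).} Proposition \ref{AutClx} and Lemma \ref{PermuteClx} together identify $\Aut X/\Aut_0X$ with the image of $\mathcal{P}(\Delta)$ in $\Aut\clx$, which by Steps 1 and 2 equals $\mathcal{P}(\Delta)/W$. In the smooth Fano case Lemma \ref{DualIso} supplies the duality $\Aut\Delta\cong\Aut P$, and it remains to match $W$ with $\Aut_0P$. Under this duality, $\mathcal{P}\in\Aut\Delta$ preserves $\clx$-classes iff $\mathcal{P}\in\Aut P$ acts trivially on $\clx$. I would then verify directly that Definition \ref{Aut0P}'s combinatorial condition cuts out exactly this kernel: the forward implication exhibits the semisimple root $m$ with $(m)=D_{\mathcal{P}v}-D_v$ as an explicit lattice point in $F_v\cap(-F_{\mathcal{P}v})\cap R(P)$, while the converse uses the witness lattice point furnished by Definition \ref{Aut0P}, together with its counterpart for the reversed pair of facets (under $\mathcal{P}^{-1}$), to recover the semisimple character realizing the linear equivalence $D_v\sim D_{\mathcal{P}v}$.

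\textbf{Main obstacle.} The only genuinely non-formal step is the combinatorial fact that $s_m$ preserves the fan, and not merely the set of rays. Attempting a hands-on cone-by-cone verification is feasible but unpleasant; delegating it to $\bar\phi_{s_m}$ as in Step 2 elegantly bypasses the combinatorics, at the cost of invoking the quotient construction (Theorem \ref{quotient-construction}), the conjugacy of maximal tori (Theorem \ref{Torustheo}), and the lifting mechanism of Lemma \ref{PermuteClx}.
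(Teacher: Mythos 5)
Your overall strategy tracks the paper's own proof closely: identify $\mathcal{P}(\Delta)\cap\widetilde{\Aut}_0X$ with permutation matrices in $G_s$, use the Lemma \ref{PermuteClx} mechanism to pull a permutation in $W$ back to a lattice automorphism, then translate to $\Aut P$ via Lemma \ref{DualIso}. A few remarks on where you diverge and where there is a genuine issue.

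In Step 2 the detour through the explicit reflection $s_m$ is unnecessary. The paper simply applies the Lemma \ref{PermuteClx} argument to an arbitrary $w\in W$: $w$ descends to an element of $\Aut_0 X$ that already normalizes $T'_X$, so after a torus adjustment it becomes $\bar\phi_{\mathcal{P}}$ for some $\mathcal{P}\in\Aut\Delta$; comparing the two permutation matrices (which differ by a diagonal matrix) forces $\phi_{\mathcal{P}}=w$. Restricting to transpositions and verifying $\mathcal{P}=s_m$ afterward is correct but adds work that the argument does not need, since the only thing used downstream is that $w$ lies in $\mathcal{P}(\Delta)$, not an explicit formula for the associated lattice map.

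The real concern is Step 3. The forward inclusion $W\subseteq\Aut_0 P$ is fine (and is what the paper's proof establishes). For the converse $\Aut_0 P\subseteq W$ you propose to use ``the counterpart for the reversed pair of facets (under $\mathcal{P}^{-1}$).'' This is problematic on two counts. First, invoking the $\Aut_0 P$-condition for $\mathcal{P}^{-1}$ presupposes $\mathcal{P}^{-1}\in\Aut_0P$, which is not available a priori: that $\Aut_0P$ is closed under inverses is part of what the identification $\Aut_0P=W$ is supposed to deliver, so this is circular. Second, if you instead mean applying the condition for $\mathcal{P}$ to the facet $\mathcal{P}F$, you get a witness for the \emph{next} facet along the $\mathcal{P}$-orbit, not the reversed pair; your pairing argument then only closes up when the orbit is a $2$-cycle, whereas the permutation of $\Delta_1$ induced by $\mathcal{P}\in\Aut P$ can have cycles of any length. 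The gap is genuine: the $\Aut_0P$ condition only says the witness $m$ lies in $R(P)$, not in $R_s(P)$, so a priori $(m)=D_{v}-D_{v'}+E$ with $E\geq0$ an unwanted effective remainder, and killing $E$ requires an argument.

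The fix is to sum over a full $\mathcal{P}$-cycle $(v_1,\ldots,v_k)$ of $\Delta_1$. Each facet $F_{v_i}$ yields $m_i\in -F_{v_i}\cap\mathcal{P}F_{v_i}\cap R(P)$ with $(m_i)=D_{v_i}-D_{v_{i-1}}+E_i$, $E_i\geq0$. Summing around the cycle, the $D$-terms telescope, so $\textstyle\sum_i(m_i)=\sum_i E_i$ is a principal effective divisor on the complete variety $X$, hence zero; thus each $E_i=0$ and each $m_i$ is semisimple, giving $D_{v_i}\sim D_{v_{i-1}}$ and hence $\phi_\mathcal{P}\in\widetilde{\Aut}_0X$. (The paper's own proof asserts ``This is exactly the definition of $\Aut_0 P$'' without spelling out this converse, so this is a point where being precise is warranted.)
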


    \begin{proof}
        The Weyl group of $G_s$ in this case can be identified with a subgroup of $G_s$ consisting of matrices whose blocks are permutation matrices (see Example \ref{GLWeyl}). A permutation in $\mathcal{P}(\Delta)\cap \widetilde{\Aut}_0 X$ corresponds to a permutation matrix that preserves the degree, so it is in the Weyl group $W$ of $G_s$. On the other hand, any element in $W$ clearly preserves the torus $T\sse X$. The same argument as in the proof of Lemma \ref{PermuteClx} shows that it induces a linear automorphism on the lattice $N$ that preserves the fan $\Delta$, thus corresponding to an element in $\mathcal{P}(\Delta)$. Since $W\sse G_s\sse \widetilde{\Aut}_0 X$, we get $\mathcal{P}(\Delta)\cap \widetilde{\Aut}_0 X=W$.

        Now we consider the case where $X$ is smooth and Fano. By definition, $\mathcal{P}(\Delta)$ is naturally isomorphic to $\Aut\Delta$, and further isomorphic to $\Aut P$ by Lemma \ref{DualIso}. We only need to show that $W$ corresponds to $\Aut_0 P$ under this isomorphism.

        Every element in $W$ gives a grading-preserving permutation of $\Delta_1$, thus a grading-preserving permutation of the set of codimension-$1$ facets of $P$. Therefore, for a codimension-$1$ facet $F$ that is not preserved by this permutation, there should be a character $m$ in $R(P)$ such that $(m)$ is the difference of the divisors corresponding to $F$ and its image $F'$. Equivalently, $m\in -F\cap F'$. This is exactly the definition of $\Aut_0 P$.
    \end{proof}


	Summarize Proposition \ref{KeyExact}, \ref{Aut0structure}, \ref{Aut0Generator}, \ref{AutClx}, \ref{WeylGroup} and Lemma \ref{PermuteClx}, Theorem \ref{Demazure's structure theorem} we assumed in the last section follows:

    \begin{proof}[Proof of Theorem \ref{Demazure's structure theorem}]
    Proposition \ref{Aut0structure}, \ref{Aut0Generator}, Corollary \ref{connectedness} and the exact sequence \eqref{G-ExactSeq} tell us part (i) of Theorem \ref{Demazure's structure theorem}. 
    
    Combining Proposition \ref{KeyExact}, \ref{Aut0structure}, \ref{Aut0Generator}, and Corollary \ref{connectedness} we get part (ii) of Theorem \ref{Demazure's structure theorem}.

    Since $W\cap G$ is the identity, the image of $W$ in $G_s/G$ is the Weyl group of $G_s/G$ with respect to the maximal torus $T'_X=(\mbc^*)^n$. Therefore, Lemma \ref{PermuteClx} and Proposition \ref{WeylGroup} are exactly part (iii) of Theorem \ref{Demazure's structure theorem}.
    \end{proof}



\appendix
\section{Quantized barycenters of Nill--Paffenholz's examples}
\label{Nill--Paffenholz's counterexamples}


This Appendix is used in Remark \ref{dimupto6Rem}. Nill--Paffenholz's exhaustive computer search shows there are three such manifolds $X_1$, $X_2$, $X_3$ whose associated polytopes are non-symmetric but has barycenter $0$ \cite[Proposition 2.1]{NP11}.

$X_1$ is 7-dimensional. Let $P_1$ be the associated polytope. Then a computer calculation shows
\begin{align}
\label{P1BCEq}
\Bc_1(P_1)&=\frac{16}{2257}(-1,-1,-1,1,1,1,2), \nonumber \\
\Bc_2(P_1)&=\frac{60}{27121}(-1,-1,-1,1,1,1,2), \nonumber \\
\Bc_3(P_1)&=\frac{2744}{2579721}(-1,-1,-1,1,1,1,2).
\end{align}
It is interesting to note that our results give a refinement of
a theorem of Ono--Sano--Yotsutani \cite[Theorem 1.6]{OSY12}, who showed 
that $(X_1,-kK_{X_1})$ is not Chow semistable for $k$ large enough.
Chow stability of $(X_1,-kK_{X_1})$ is equivalent to existence of $k$-balanced metrics (see \cite[Theorem 4]{PS03}, \cite[Theorem 3.2]{Zh96}, \cite[Theorem 0.1]{Luo98}), so this means Ono--Sano--Yotsutani showed that $X_1$ does not admit $k$-balanced metrics for $k$ large enough. It was first shown by Saito--Takahashi \cite[Example 5.7]{ST} that $X_1$ does not admit $k$-anticanonical balanced metrics for large $k$. But in fact from the above computation and Theorem \ref{relation of symm notions}, $\delta_k(-K_{X_1})<1$ for all $k$ except at most $7$ values of $k$. Thus, $X_1$ does not admit $k$-anticanonical balanced metrics for all but at most $7$ values of $k$.

Next, $X_2=X_1\times\PP^1$. So $P_2=P_1\times[-1,1]$. Then $\Bc_k(P_2)=(\Bc_k(P_1),0)$ (see \eqref{P1BCEq}).

Finally,
$X_3$ is 8-dimensional. Let $P_3$ be the associated polytope. Then a computer calculation shows
\begin{align*}
\Bc_1(P_3)&=\frac{32}{5459}(-1,-1,-1,1,1,1,1,2),\\
\Bc_2(P_3)&=\frac{580}{321787}(-1,-1,-1,1,1,1,1,2).
\end{align*}

\bigskip
\textsc{
University of Maryland
and Princeton University
}

{\tt cjin123@terpmail.umd.edu}

\bigskip\textsc{
University of Maryland
and Stanford University
}

{\tt yanir@alum.mit.edu}

\bigskip
\textsc{Nanjing University
and Rutgers University}

{\tt zyang2000@qq.com}

\end{document}